\newtheorem{definition}{Definition}[section]
\newtheorem{lemma}{Lemma}[section]
\newtheorem*{question}{Question}%[section]
\newtheorem{theorem}{Theorem}%[section]
\newtheorem{corollary}{Corollary}%[section]
\newtheorem{remark}{Remark}[section]
\newtheorem{example}{Example}[section]
\newtheorem{proposition}{Proposition}[section]
\numberwithin{equation}{section}
\numberwithin{equation}{section}
\renewcommand*{\backref}[1]{}
\renewcommand*{\backrefalt}[4]{\quad \tiny
	\ifcase #1 (\textbf{NOT CITED.})%
	\or    (Cited on Section~#2.)%
	\else   (Cited on Section~#2.)%
	\fi}
\def\MRbibitem{\@ifnextchar[\my@lbibitem\my@bibitem}
\def\mybiblabel#1#2{\@biblabel{{\hyperref{http://www.ams.org/mathscinet-getitem?mr=#1}{}{}{#2}}}}
\def\myhyperanchor#1{\Hy@raisedlink{\hyper@anchorstart{cite.#1}\hyper@anchorend}}
\def\my@lbibitem[#1]#2#3#4\par{%
	\item[\mybiblabel{#2}{#1}\myhyperanchor{#3}\hfill]#4%
	\@ifundefined{ifbackrefparscan}{}{\BR@backref{#3}}%
	\if@filesw{\let\protect\noexpand\immediate% write to aux-file
		\write\@auxout{\string\bibcite{#3}{#1}}}\fi\ignorespaces%
}
\def\my@bibitem#1#2#3\par{%
	\refstepcounter\@listctr% standard tex item counter for the generic item number
	\item[\mybiblabel{#1}{\the\value\@listctr}\myhyperanchor{#2}\hfill]#3%
	\@ifundefined{ifbackrefparscan}{}{\BR@backref{#2}}%
	\if@filesw\immediate\write\@auxout% write to aux-file
	{\string\bibcite{#2}{\the\value\@listctr}}\fi\ignorespaces%
}
\subjclass[2020]{Primary: 37A35, 37C45.  Second: 37B05.}
\keywords{dimension theory; multifractal analysis; almost weak specification property; metric mean dimension}
\author{Chunlin Liu}
\address[Chunlin Liu]
{CAS Wu Wen-Tsun Key Laboratory of Mathematics, School of Mathematical Sciences, University of Science and Technology of China, Hefei, Anhui, 230026, P.R. China}
\email[C.~Liu]{lcl666@mail.ustc.edu.cn}
\author{Xue Liu}
\address[Xue Liu]
{CAS Wu Wen-Tsun Key Laboratory of Mathematics, School of Mathematical Sciences, University of Science and Technology of China, Hefei, Anhui, 230026, P.R. China}
\email[X.~Liu]{xueliu21@ustc.edu.cn}
\begin{document}
	
	\begin{abstract}
		Let $(X, d)$ be a compact metric space, $f: X \to X$ be a continuous transformation with the almost weak specification property and $\varphi: X \to \mathbb{R}$ be a continuous function. We consider the set (called the irregular set for $\varphi$) of points for which the Birkhoff average of $\varphi$ does not exist and show that this set is either empty or carries full Bowen upper and lower metric mean dimension. %By the similar argument, we also provide a variational principle for the upper and lower metric mean dimension of level sets $\{x\in X:\lim_{n\to \infty}\frac{1}{n}\sum_{i=0}^{n-1}\varphi (f^i(x))=\alpha\}.$
	\end{abstract}

	%%%%%%%%%%%%%%%%%%%%%%%%%%%%%%%%%%%%%%%%%%%%
	
	\title[The irregular set has full metric mean dimension]{The irregular set for maps with almost weak specification property has full metric mean dimension}
	\maketitle

	%\baselineskip 15pt   % between lines
	%12pt-standard, 24pt-double space, 0.1in-narrow
	\parskip 10pt         % between paragraphs

	%%**********************************************************************
	%%%%%%%%%%%%%%%%%%%%%%%%%%%%%%%%%%%%%%%%%%%%%%%%%%%%%%%%%%%%%%%%%%%% SS1
	\section{Introduction}
	\subsection{Motivation}
	Let $(X,d,f)$ be a topological dynamical system (abbr. TDS), i.e. a compact metric space $(X,d)$ and a continuous transformation $f:X\to X$. For any continuous observable $\varphi:X\to \mathbb{R}$, the space $X$ has a natural multifractal decomposition
	\begin{equation*}
		X=\cup_{\alpha\in\mathbb{R}}K_{\varphi,\alpha}\cup I_\varphi,
	\end{equation*}where
	\begin{equation}\label{eq irregular set}
		K_{\varphi,\alpha}=\{x\in X:\lim_{n\to \infty}\frac{1}{n}\sum_{i=0}^{n-1}\varphi (f^i(x))=\alpha\}\mbox{ and } I_{\varphi}=\{x\in X:\lim_{n\to \infty}\frac{1}{n}\sum_{i=0}^{n-1}\varphi (f^i(x))\mbox{ DNE }\}.
	\end{equation}
	There are extensive literature studying the Bowen topological entropy and topological pressure of these decomposition sets. Here the Bowen topological entropy is defined by Bowen \cite{Bowen1973Topentropy} and topological pressure is defined by Pesin and Pitskel \cite{PesinPitskel1984toppressure} on noncompact sets, both of which can be equivalently defined by using Carathéodory dimension structure \cite{Pesin}. The Bowen topological entropy and pressure of  $K_{\varphi,\alpha}$ and $I_{\varphi}$ can be estimated by constructing some Moran-like fractals. In particular, a variational principle between the Bowen topological entropy of $K_{\varphi,\alpha}$ and measure-theoretic entropy has been established for systems with the specification-like property \cite{TV,PfisterSullivan2007galmost,TianVarandas2017}. Meanwhile, the irregular set $I_\varphi$ is either empty or carrying full Bowen topological entropy in systems with the specification property \cite{Shulin2005} (see \cite{Thompson2010irregularspecification} for the case of topological pressure), in systems with the almost specification property\cite{Thompson2012almostspecification} and in systems with the shadowing property\cite{Tian2018shadowing}.
	
	Note that on a compact smooth manifold with dimension greater than one, continuous transformations with infinite topological entropy  are generic in the space of all continuous transformations with uniform metric \cite{Koichi1980}. Recently, authors \cite{BobokTroubetzkoy2020} showed that in the space of continuous non-invertible maps of the unit interval preserving the Lebesgue measure, which is equipped with the uniform metric, the functions with the specification property and infinite topological entropy form a dense $G_\delta$ set. Since there are many  systems with the infinite entropy and the specification-like property, a more subtle question arises naturally:
	\begin{question}
		Given a system  having both the certain specification-like property and infinite topological entropy, and a continuous function $\varphi:X\to\mathbb{R}$,
		\begin{enumerate}
			\item for $\alpha\in\{\beta\in\mathbb{R}:\ K_{\varphi,\beta}\not=\emptyset\}$,	does $K_{\varphi,\alpha}$ carry more information besides the variational principle we mentioned before?
			\item    does $I_\varphi$ carry more information besides infinite Bowen topological entropy?
		\end{enumerate}
	\end{question}
	%In order to answer this question, we need to some new topological invariants to describe the systems with infinite entropy. There are many relative invariants to
	
	In order to quantify the complexity of systems with infinite entropy, Lindenstrauss and Weiss \cite{LindenstraussWeiss2000} introduced the (upper and lower) metric mean dimension \cite{LindenstraussWeiss2000}, which is a metric version of the mean dimension introduced by Gromov \cite{Gromov1999}. Roughly speaking, while $h_{\operatorname{top}}(f,\epsilon)$ measures the exponential growth rate of the number of $\epsilon$-distinguishable orbits as time advances and the topological entropy is the limit of $h_{\operatorname{top}}(f,\epsilon)$ as $\epsilon$ vanishes, the metric mean dimension measures the growth rate of $h_{\operatorname{top}}(f,\epsilon)$ as $\epsilon$ vanishes.
	Similar as the topological entropy, the metric mean dimension has a strong connection with ergodic theory, and lots of variational principles have been established, see \cite{S,LindenstraussTsukamoto2019,GS,LindenstraussTsukamoto2018,Wang2021} and reference therein.
	%Besides, mean dimension plays an key role in solving embedding problems, see \cite{LindenstraussWeiss2000,Gutman2017,GutmanTsukamoto2020,Lindenstrauss1999}.

	While $h_{\operatorname{top}}(f,\epsilon)$ in a way resembles the box dimension, the $\epsilon$-Bowen topological entropy in a way resembles the Hausdorff dimension. The Bowen (upper and lower) metric mean dimension can be defined similarly to capture the growth rate of the $\epsilon$-Bowen topological entropy as $\epsilon$ vanishes. When a set is compact and invariant, the Bowen metric mean dimension and the classical metric mean dimension of this set coincide. More recently, the author in \cite{Wang2021} studied the upper Bowen metric mean dimension of any compact subset and established a relation with the measure-theoretic lower entropy introduced in \cite{FengHuang2012}.
	
	Let us go back to our questions. Recently, for a TDS with the specification property and infinite topological entropy, Backes and Rodrigues \cite{LF} established a variational principle between the Bowen upper metric mean dimension of $K_{\varphi,\alpha}$ and growth rates of measure-theoretic entropy of partitions decreasing in diameter associated to those measure $\mu$ such that $\int\varphi d\mu=\alpha$. In this paper, we consider systems satisfying the almost weak specification property, which is weaker than the specification property.
	
	%Naturally, we consider that for a weaker specification-like property, whether the corresponding result is true?
	%In this paper, we use the almost weak specification property \cite{AnthonyTerry2016} to replace the specification property. Namely,
	\begin{definition}\label{def almostweakspecificationproperty}
		A system $(X,d,f)$ is said to satisfy the almost weak specification property if for any $\epsilon>0$, there exists a tempered function  $L_\epsilon:\mathbb{N}\to \mathbb{N}$ (i.e., nondecreasing and $\lim_{n\to\infty}{L_{\epsilon}(n)}/{n}=0$) such that for any sequence of points $x_1,\ldots,x_k\in X$ and positive integers $a_1\le b_1<a_2\le b_2<\ldots<a_k\le b_k$ satisfying $a_{j+1}-b_j\ge L_{\epsilon}(b_{j+1}-a_{j+1})$ for all  $j\in \{1,...,k-1\}$, there exists $y\in X$ such that
		\begin{equation*}
			d(f^i(y),f^{i-a_j}(x_j))< \epsilon,\mbox{ for any }i\in[a_j,b_j] \text{ and }j\in \{1,...,k\}.
		\end{equation*}
	\end{definition}The notion of the almost weak specification property was first introduced in \cite{Marcus1980} without a name. It was called the almost weak specification in some references such as \cite{AnthonyTerry2016,MR1041229,Sun2017}, called the weak specification in \cite{MR3546668}, and suggested to be called the tempered specification in \cite{sun2019ergodic}. Marcus \cite{Marcus1980} proved that quasi-hyperbolic toral automorphisms satisfy the almost weak specification. In \cite{MR1041229}, Dateyama showed every automorphism of a compact metric abelian group is ergodic under the Haar measure if and only if it satisfies the
	almost weak specification. More relevant results can be seen in \cite{MR3546668}. Relations between specification-like properties can be found in \cite{SunPeng,sun2019ergodic}.
	
	In this paper, we focus on the second question and show that for the TDS with the almost weak specification property and infinite topological entropy, either $I_\varphi$ is empty or its Bowen (upper and lower) metric mean dimension coincides with the classical (upper and lower) metric mean dimension of $(X,d,f)$. We believe that the same method can be applied to show that the main result, a variational principle on level sets, in \cite{LF} also holds for such systems. To state our main results precisely, we start with some definitions.

	%In this paper, we firstly prove the reult in \cite{LF} also holds when the TDS with the almost weak specification property.
	%Next, we focus on the second question and show that for the TDS with the almost weak specification property and infinite entropt, either $I_\varphi$ is empty or its Bowen (upper and lower) metric mean dimension of $I_\varphi$ coincides the classical (upper and lower) metric mean dimension of $(X,d,f)$.
	
	%When we use the almost weak specification property, the gap depends on the length of the shadowing orbit. Then we cannot use the classical method to construct the Moran-like fractal to complete the proof, as this results in too much error. In this paper, we use some observation to reduce the number of times we use the almost weak specification property to avoid the problem above-mentioned.
	
	%The difficulty in proving the main results is twofold. Firstly,

	\subsection{Metric mean dimension}
	Let $(X,d,f)$ be a TDS. Given $n \in \mathbb{N}$, we define the Bowen metric $d_n$ on $X$ by
	$d_n(x, y)=\max_{0\le i\le n-1} d(f^{i}x,f^{i}y) .$
	Given $\epsilon>0, n \in \mathbb{N}$ and $x \in X$, we define the $(n,\epsilon)$-ball around $ x$ by $
	B_n(x, \epsilon)=\left\{y \in X : d_n(x, y)<\epsilon\right\}.$
	
	By employing the Carathéodory dimension structure (see \cite{Pesin}), the Bowen metric mean dimension is defined as follows.
	Given a nonempty set $Z \subset X$, let
	\begin{equation}\label{def m Z,s,Nepsilon}
		m(Z, s, N, \epsilon)=\inf _{\Gamma}\left\{\sum_{i \in I} \exp \left(-s n_i\right)\right\},
	\end{equation}
	where the infimum is taken over all finite or countable collection $\Gamma=\left\{B_{n_i}\left(x_i, \epsilon\right)\right\}_{i \in I}$ with $Z\subset\cup_{i\in I}B_{n_i}\left(x_i, \epsilon\right)$ and $\min\{n_i:i\in I\} \geq N$. Note that $m(Z,s,N,\epsilon)$ does not decrease as $N$ increases, and therefore the following limit exists
	$$
	m(Z, s, \epsilon)=\lim _{N \rightarrow \infty} m(Z, s, N, \epsilon) .
	$$
	There exists a critical number $h_{top}^B(Z,f,\epsilon)\in\mathbb{R}$ such that
	\begin{equation*}
		m(Z,s,\epsilon)=\begin{cases}
			+\infty, & \mbox{if } s<h_{top}^B(Z,f,\epsilon) \\
			0, & \mbox{if }s>h_{top}^B(Z,f,\epsilon).
		\end{cases}
	\end{equation*}Note that $m(Z,h_{top}^B(Z,f,\epsilon),\epsilon)$ could be $+\infty$, $0$ or some positive finite number. The Bowen topological entropy is defined by $h_{top}^B(Z,f)=\lim_{\epsilon\to 0}h_{top}^B(Z,f,\epsilon)$ (see \cite{Pesin} or \cite{TV}).
	The Bowen upper and lower metric mean dimension of $f$ on $Z$ with respect to $d$ are defined by
	\begin{equation}\label{2}
		\overline{\operatorname{mdim}}_\mathrm{M}^B(Z, f, d)=\limsup _{\epsilon \rightarrow 0} \frac{h_{top}^B(Z, f, \epsilon)}{|\log \epsilon|}\mbox{ and }\underline{\operatorname{mdim}}_\mathrm{M}^B(Z, f, d)=\liminf _{\epsilon \rightarrow 0} \frac{h_{top}^B(Z, f, \epsilon)}{|\log \epsilon|},
	\end{equation}respectively.
	By the definition, if $Z_1\subset Z_2$ are nonempty, then
	\begin{equation}\label{eq subset leq}
		\overline{\operatorname{mdim}}_\mathrm{M}^B(Z_1, f, d)\leq \overline{\operatorname{mdim}}_\mathrm{M}^B(Z_2, f, d)\text{ and } \underline{\operatorname{mdim}}_\mathrm{M}^B(Z_1, f, d)\leq \underline{\operatorname{mdim}}_\mathrm{M}^B(Z_2, f, d).
	\end{equation}
	
	The classical metric mean dimension is defined as follows.
	Given $n \in \mathbb{N}$ and $\epsilon>0$,  we say that a set $E \subset X$ is $(n, \epsilon)$-separated if $d_n(x, y)>\epsilon$ for every $x\not=y \in E$. Denote by $s(f,X, n, \epsilon)$ the largest cardinality of all maximal $(n, \epsilon)$-separated subsets of $X$, which is finite due to the compactness of $X$.
	The classical upper and lower metric mean dimension of $f$ with respect to $d$ are given by
	\begin{equation}\label{1}
		\overline{\operatorname{mdim}}_{\mathrm{M}}(X, f, d)=\limsup _{\epsilon \rightarrow 0} \frac{h_{top}(f, X,\epsilon)}{|\log \epsilon|}\text{ and }\underline{\operatorname{mdim}}_{\mathrm{M}}(X, f, d)=\liminf _{\epsilon \rightarrow 0} \frac{h_{top}(f, X,\epsilon)}{|\log \epsilon|},
	\end{equation}respectively,
	where
	\begin{equation*}
		h_{top}(f, X,\epsilon)=\limsup_{n \rightarrow \infty} \frac{1}{n} \log s(f, X,n, \epsilon)=\lim_{n \rightarrow \infty} \frac{1}{n} \log s(f, X,n, \epsilon).
	\end{equation*}
	Recall that the classical topological entropy of $f$ is given by
	$
	h_{top}(f,X)=\lim _{\epsilon \rightarrow 0} h_{top}(f, X,\epsilon).
	$
	It is clear that the metric mean dimension vanishes if $h_{top}(f,X)<\infty$.
	
	The following proposition is stated in several papers\cite{LF,Wang2021}, but we were not able to locate a clear reference to it. Therefore, we give a  proof in appendix for the sake of completeness.
	\begin{proposition}\label{propositiob coincides}
		For any $f$-invariant and compact nonempty subset $Z\subset X$, one has
		\begin{equation*}
			\overline{\operatorname{mdim}}_\mathrm{M}^B(Z, f, d)=\overline{\operatorname{mdim}}_{\mathrm{M}}(Z, f, d)\text{ and }\underline{\operatorname{mdim}}_\mathrm{M}^B(Z, f, d)=\underline{\operatorname{mdim}}_{\mathrm{M}}(Z, f, d).
		\end{equation*}
	\end{proposition}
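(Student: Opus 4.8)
The plan is to compare, at each fixed scale, the Bowen $\epsilon$-entropy of $Z$ with the ordinary $\epsilon$-entropy of $Z$, and then to let $\epsilon\to 0$ after normalising by $|\log\epsilon|$, using that $|\log(c\epsilon)|/|\log\epsilon|\to 1$ as $\epsilon\to 0$ for every fixed $c>0$. Concretely, I would establish the two-sided estimate
\[
h_{top}(f,Z,2\epsilon)\ \le\ h_{top}^B(Z,f,\epsilon)\ \le\ h_{top}(f,Z,\epsilon/2)
\]
for all sufficiently small $\epsilon>0$, where $h_{top}(f,Z,\delta)=\limsup_{n}\frac1n\log s(f,Z,n,\delta)$ and $s(f,Z,n,\delta)$ is the largest cardinality of an $(n,\delta)$-separated subset of $Z$. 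Dividing through by $|\log\epsilon|$ and taking $\limsup_{\epsilon\to0}$ (resp. $\liminf_{\epsilon\to0}$) of all three terms, the two outer quantities both converge to $\overline{\operatorname{mdim}}_{\mathrm{M}}(Z,f,d)$ (resp. $\underline{\operatorname{mdim}}_{\mathrm{M}}(Z,f,d)$), which forces the middle one to do the same; that is exactly the assertion.

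The right-hand inequality is the elementary covering half and uses only compactness of $Z$. For each $n\ge N$, choose an $(n,\epsilon/2)$-separated subset $E_n\subset Z$ of cardinality $s(f,Z,n,\epsilon/2)$; being maximal it is $(n,\epsilon/2)$-spanning, so the open Bowen balls $\{B_n(x,\epsilon)\}_{x\in E_n}$ cover $Z$ and all carry the single index $n\ge N$. Hence $m(Z,s,N,\epsilon)\le s(f,Z,n,\epsilon/2)\,e^{-sn}$ for every $n\ge N$. Since $s(f,Z,n,\epsilon/2)\,e^{-sn}\to 0$ as $n\to\infty$ whenever $s>h_{top}(f,Z,\epsilon/2)$, we get $m(Z,s,N,\epsilon)=0$ for every $N$, hence $m(Z,s,\epsilon)=0$, and therefore $h_{top}^B(Z,f,\epsilon)\le h_{top}(f,Z,\epsilon/2)$.

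The left-hand inequality is the substantial half, and it is where $f$-invariance of $Z$ enters. Fix $s<h_{top}(f,Z,2\epsilon)$ and an arbitrary countable cover $\Gamma=\{B_{n_i}(x_i,\epsilon)\}_{i\in I}$ of $Z$ with $n_i\ge N$ for all $i$; by compactness of $Z$ we may pass to a finite subcover (this only decreases $\sum_i e^{-sn_i}$), and set $n^{\ast}=\max_{i\in I}n_i$. The crucial point is the combinatorial inequality
\[
s(f,Z,n,2\epsilon)\ \le\ \sum_{i\in I}s(f,Z,n-n_i,2\epsilon)\qquad\text{for every integer }n>n^{\ast},
\]
which I would prove by taking an $(n,2\epsilon)$-separated $E\subset Z$ of largest cardinality, writing $E\subset\bigcup_i B_{n_i}(x_i,\epsilon)$, and checking that for each $i$ the map $y\mapsto f^{n_i}(y)$ sends $E\cap B_{n_i}(x_i,\epsilon)$ \emph{injectively} into an $(n-n_i,2\epsilon)$-separated subset of $Z$: two points of $E\cap B_{n_i}(x_i,\epsilon)$ lie within $2\epsilon$ along the first $n_i$ coordinates, so any coordinate at which they are $2\epsilon$-separated has index $\ge n_i$, and $f$-invariance guarantees the images stay in $Z$. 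Granting this, suppose for contradiction that $\sum_{i\in I}e^{-sn_i}<1$. Then a strong induction on $n$ (a renewal/Fekete argument), with the constant $C:=\max_{1\le k\le n^{\ast}}s(f,Z,k,2\epsilon)e^{-sk}$ covering the base range $1\le n\le n^{\ast}$ and the displayed inequality handling $n>n^{\ast}$, yields $s(f,Z,n,2\epsilon)\le Ce^{sn}$ for all $n\ge1$, hence $\limsup_n\frac1n\log s(f,Z,n,2\epsilon)\le s<h_{top}(f,Z,2\epsilon)$, a contradiction. Thus every admissible cover satisfies $\sum_{i\in I}e^{-sn_i}\ge1$, so $m(Z,s,N,\epsilon)\ge1$ for all $N$, whence $m(Z,s,\epsilon)\ge1>0$ and $h_{top}^B(Z,f,\epsilon)\ge s$; letting $s\uparrow h_{top}(f,Z,2\epsilon)$ finishes the estimate.

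I expect the main obstacle to be conceptual rather than computational: the textbook route to $h_{top}^B(Z,f)=h_{top}(f,Z)$ for compact invariant $Z$ (a high-entropy invariant measure together with the Brin–Katok or Shannon–McMillan–Breiman formula) only identifies the two numbers in the limit $\epsilon\to0$, which is not enough to equate the two metric mean dimensions; one genuinely needs the scale-by-scale comparison above, and in particular the counting inequality, whose proof is the one place invariance and compactness of $Z$ are both essential. Once that inequality is isolated, the remaining ingredients — the short induction and the elementary limit $|\log(c\epsilon)|/|\log\epsilon|\to1$ — are routine.
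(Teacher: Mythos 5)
Your proof is correct. Both halves of the scale-by-scale estimate $h_{top}(f,Z,2\epsilon)\le h_{top}^B(Z,f,\epsilon)\le h_{top}(f,Z,\epsilon/2)$ check out: the covering half is standard, and the substantial half — the counting inequality $s(f,Z,n,2\epsilon)\le\sum_{i\in I}s(f,Z,n-n_i,2\epsilon)$ via pushing $E\cap B_{n_i}(x_i,\epsilon)$ forward by $f^{n_i}$, followed by the Fekete-type induction against the assumption $\sum_i e^{-sn_i}<1$ — is exactly the right mechanism, and it correctly exploits $f$-invariance (to keep the image inside $Z$) and compactness (to reduce to a finite subcover, making $n^*=\max_i n_i$ finite). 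Normalising by $|\log\epsilon|$ and using $|\log(c\epsilon)|/|\log\epsilon|\to 1$ then yields both equalities.

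Your route differs from the paper's in how the hard inequality is obtained. The paper does the easy direction as you do, but for $\overline{\operatorname{mdim}}_{\mathrm{M}}(Z,f,d)\le\overline{\operatorname{mdim}}_{\mathrm{M}}^B(Z,f,d)$ it introduces an auxiliary capacity quantity $\overline{CP}(Z,\epsilon)$ (built from coverings by Bowen balls all of a \emph{single} order $N$), invokes Pesin's Theorem 11.5 — the identity $P_Z(\mathcal{U})=\overline{CP}_Z(\mathcal{U})$ between Carathéodory pressure and capacity pressure on compact invariant sets, at the level of open covers — and then translates back to the metric scale through the relations (11.11) of Pesin's book together with the Lebesgue-number lemma. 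In other words, the paper uses precisely the same conceptual fact you need, but delegated to Pesin as a black box. Your argument re-proves that black-box fact from scratch via the renewal inequality, so it is more elementary and self-contained, at the cost of reproducing a known combinatorial argument; the paper's version is shorter given the external reference but requires setting up the capacity notation and the cover/metric dictionary. A very minor point in your favour: by using $(n,\epsilon/2)$-separated sets with open balls of radius $\epsilon$ in the easy direction, you avoid the small convention issue (open vs.\ closed spanning balls) that the paper glosses over when it covers by balls of the same radius as the separation scale.
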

	\subsection{Main result} The following is our main result.
	\begin{theorem}\label{main}
		Let $(X, d,f)$ be a TDS satisfying the almost weak specification property, and let $I_\varphi$ be the irregular set of $\varphi$ defined in \eqref{eq irregular set}. If  $\overline{\operatorname{mdim}}_{\mathrm{M}}(X, f, d)<\infty$ (resp. $\underline{\operatorname{mdim}}_{\mathrm{M}}(X, f, d)<\infty$), then either $I_\varphi=\emptyset$ or
		\begin{equation}\label{eq Bowen=mean}
			\overline{\operatorname{mdim}}_{\mathrm{M}}^B(I_\varphi, f, d)=\overline{\operatorname{mdim}}_{\mathrm{M}}(X, f, d) \quad( \text{resp. }\underline{\operatorname{mdim}}_{\mathrm{M}}^B(I_\varphi, f, d)=\underline{\operatorname{mdim}}_{\mathrm{M}}(X, f, d)).
		\end{equation}
	\end{theorem}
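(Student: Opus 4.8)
\emph{Set-up and reductions.} Write $S_n\varphi(x)=\sum_{i=0}^{n-1}\varphi(f^i(x))$ and $h_\epsilon:=h_{top}(f,X,\epsilon)$. The upper bound is free: since $I_\varphi\subset X$ and $X$ is compact and $f$-invariant, \eqref{eq subset leq} and Proposition \ref{propositiob coincides} give $\overline{\operatorname{mdim}}_{\mathrm{M}}^B(I_\varphi,f,d)\le\overline{\operatorname{mdim}}_{\mathrm{M}}^B(X,f,d)=\overline{\operatorname{mdim}}_{\mathrm{M}}(X,f,d)$, and likewise for $\underline{\operatorname{mdim}}$. Also $h_{top}^B(I_\varphi,f,\epsilon)\ge0$ because $I_\varphi\ne\emptyset$ (a negative exponent forces $m(I_\varphi,s,N,\epsilon)\to\infty$), so if $h_{top}(f,X)<\infty$ both sides of \eqref{eq Bowen=mean} vanish. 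Assume henceforth $h_{top}(f,X)=\infty$; it remains to prove $\overline{\operatorname{mdim}}_{\mathrm{M}}^B(I_\varphi,f,d)\ge\overline{\operatorname{mdim}}_{\mathrm{M}}(X,f,d)$, the lower-mean-dimension case being the same once one notes $\liminf_{\epsilon\to0}\tfrac{h_\epsilon}{|\log\epsilon|+C}=\underline{\operatorname{mdim}}_{\mathrm{M}}(X,f,d)$ for every constant $C\ge0$.

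\emph{Using $I_\varphi\ne\emptyset$.} Taking $z\in I_\varphi$ and passing to weak-$*$ limits of $\tfrac1n\sum_{i=0}^{n-1}\delta_{f^i(z)}$ along subsequences realizing $\liminf_n\tfrac1nS_n\varphi(z)$ and $\limsup_n\tfrac1nS_n\varphi(z)$, one obtains invariant measures with distinct $\varphi$-averages, so $\{\int\varphi\,d\mu:\mu\text{ invariant}\}$ is a nondegenerate compact interval. Combining this with upper semicontinuity and affineness of the measure entropy at scale $\epsilon$ and with continuity of the induced "scale-$\epsilon$ complexity spectrum'' near its maximum, one extracts, for every small $\epsilon>0$, two values $\alpha_1=\alpha_1(\epsilon)\ne\alpha_2=\alpha_2(\epsilon)$ in that interval and, for each $i\in\{1,2\}$ and each large $n$, an $(n,\epsilon)$-separated set $\mathcal E_n^{(i)}\subset X$ with $|\mathcal E_n^{(i)}|\ge\exp\!\big(n(h_\epsilon-\eta(\epsilon))\big)$ for some $\eta(\epsilon)=o(|\log\epsilon|)$, all of whose elements $x$ satisfy $\big|\tfrac1nS_n\varphi(x)-\alpha_i\big|<\kappa$, where $\kappa:=\tfrac18|\alpha_1-\alpha_2|$. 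Producing these sets — a scale-localized, two-value refinement of the level-set counting behind \cite{LF}, now for the almost weak specification property, and in particular controlling the spread of the spectrum near its maximum against the modulus of continuity of $\varphi$ — is one of the two genuine technical points, and it is the only place the hypothesis $I_\varphi\ne\emptyset$ (non-degeneracy of the integral interval) is really used.

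\emph{The Moran construction.} Fix small $\epsilon>0$, set $\delta=\tfrac18\epsilon$ (so that, by uniform continuity of $\varphi$, the modulus $\omega_\varphi(\delta):=\sup\{|\varphi(a)-\varphi(b)|:d(a,b)<\delta\}$ is $<\kappa$ for $\epsilon$ small), and let $L=L_\delta$ be the tempered gap function of Definition \ref{def almostweakspecificationproperty}. Partition time into consecutive \emph{runs} $r=1,2,\dots$; run $r$ is the concatenation of $q_r$ orbit segments, the $j$-th ranging over $\mathcal E_{\ell_r}^{(i(r))}$, with the required gaps of length $L(\ell_r)$ between them, and the type $i(r)\in\{1,2\}$ alternating. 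Choose $\ell_r\to\infty$ with $\ell_r=o(T_{r-1})$ (here $T_{r-1}$ is the total length of runs $1,\dots,r-1$) and $q_r$ so large that run $r$ is longer than $T_{r-1}$. By Definition \ref{def almostweakspecificationproperty} each \emph{branch} (a choice of one segment per slot) is $\delta$-traced by some $y\in X$; let $F=F_\epsilon$ be the set of these branch points, fixed coherently via a compactness argument. Since $L(n)/n\to0$, all gaps and length defects are $o(n)$, and one checks: (i) along every branch the Birkhoff average up to the end of run $r$ lies within $\kappa+\omega_\varphi(\delta)+o(1)\le\tfrac14|\alpha_1-\alpha_2|+o(1)$ of $\alpha_{i(r)}$, hence oscillates, so $F\subset I_\varphi$; (ii) two branches first differing in a segment are $(m,\epsilon-2\delta)$-separated for all $m$ past that segment, and $\epsilon-2\delta=\tfrac34\epsilon$; (iii) writing $\Phi(n)$ for the logarithm of the number of segment choices completed before time $n$, one has $\Phi(n)\ge(h_\epsilon-\eta(\epsilon))n-o(n)$, because every segment contributes choices at rate $\ge h_\epsilon-\eta(\epsilon)$ and $\ell_r=o(T_{r-1})$ keeps the per-segment dip of $\Phi(n)/n$ negligible.

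\emph{Lower bound and conclusion.} Equip $F$ with the probability measure $\mathfrak m$ that is the product of the uniform distributions on the successive choice sets, and run the mass-distribution principle at scale $\tfrac18\epsilon$. If $B_n(x,\tfrac18\epsilon)$ meets $F$, pick $x'$ in the intersection; then $B_n(x,\tfrac18\epsilon)\subset B_n(x',\tfrac14\epsilon)$, and by (ii) any branch in $B_n(x',\tfrac14\epsilon)$ must agree with $x'$ on every segment completed before $n$ (distinct branches being $\tfrac34\epsilon>\tfrac12\epsilon$ apart), whence $\mathfrak m\big(B_n(x,\tfrac18\epsilon)\big)\le e^{-\Phi(n)}\le e^{-sn}$ for all large $n$ whenever $s<h_\epsilon-\eta(\epsilon)$, using (iii). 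Hence any cover of $F$ by balls $B_{n_i}(x_i,\tfrac18\epsilon)$ with $\min_i n_i$ large has $\sum_i e^{-sn_i}\ge\sum_i\mathfrak m\big(B_{n_i}(x_i,\tfrac18\epsilon)\big)\ge\mathfrak m(F)=1$, so $h_{top}^B(I_\varphi,f,\tfrac18\epsilon)\ge h_{top}^B(F,f,\tfrac18\epsilon)\ge h_\epsilon-\eta(\epsilon)$. Dividing by $|\log(\tfrac18\epsilon)|=|\log\epsilon|+\log8$ and letting $\epsilon\to0$ along a subsequence realizing $\overline{\operatorname{mdim}}_{\mathrm{M}}(X,f,d)$, the constant $\log8$ and the term $\eta(\epsilon)=o(|\log\epsilon|)$ wash out, giving $\overline{\operatorname{mdim}}_{\mathrm{M}}^B(I_\varphi,f,d)\ge\overline{\operatorname{mdim}}_{\mathrm{M}}(X,f,d)$; the $\underline{\operatorname{mdim}}$ inequality follows identically with $\liminf$ in place of $\limsup$ and the remark of the first paragraph. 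The two steps needing real care are the scale-localized separated-set estimate of the second paragraph (the main obstacle) and the — routine but lengthy — verification of (i)–(iii) with the tempered gaps of the almost weak specification property bookkept explicitly.
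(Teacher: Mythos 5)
Your outline follows the same architecture as the paper: reduce to the lower bound via Proposition~\ref{propositiob coincides}, construct a Moran-type fractal inside $I_\varphi$ by alternately gluing orbit segments that realize Birkhoff averages near two distinct values, build a measure on it, and finish with a mass-distribution (entropy distribution principle) argument at a fixed scale $\epsilon$. So this is not a different route, and the comparison reduces to whether the pieces you leave as ``the two genuine technical points'' can actually be filled in as asserted.

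The main gap is the second paragraph. You claim that for every small $\epsilon$ one can produce two targets $\alpha_1(\epsilon)\ne\alpha_2(\epsilon)$ together with $(n,\epsilon)$-separated deviation sets $\mathcal E_n^{(i)}$ of cardinality $\ge\exp(n(h_\epsilon-\eta(\epsilon)))$ with $\eta(\epsilon)=o(|\log\epsilon|)$, citing ``upper semicontinuity and affineness of the measure entropy at scale $\epsilon$.'' This is precisely where the paper's real work happens, and it is not a routine corollary of those slogans. The quantity that controls scale-$\epsilon$ complexity in the variational principle is $\inf_{|\xi|<\epsilon}h_\mu(f,\xi)$, and the measures maximizing it (whose existence one gets from Lemma~\ref{lemma 2.7}) need not be ergodic; one cannot then directly convert ``high scale-$\epsilon$ entropy'' into a large $(n,\epsilon)$-separated set inside a prescribed deviation set, and one cannot invoke ``ergodic measures are entropy-dense'' because $\inf_{|\xi|<\epsilon}h_\mu(f,\xi)$ need not be approximated from within $M^e(X,f)$ (this is the paper's Remark after Lemma~\ref{lem2}). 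The paper's resolution is Lemma~\ref{lem5} (ergodic decomposition plus affineness, to replace a non-ergodic $\mu$ by a finite convex combination of ergodic $\nu_i$ with nearly the same $\varphi$-integral and nearly the same scale-$\epsilon$ entropy), followed by Lemma~\ref{lem3} (Katok's formula and the Birkhoff ergodic theorem on each $\nu_i$, then a first round of almost-weak-specification gluing with the tempered gaps carefully bookkept). Your proposal does not supply a substitute for this chain; without it the claimed $\mathcal E_n^{(i)}$ simply do not obviously exist.

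There is also a concrete flaw in the third paragraph. You fix the tracing precision at $\delta=\epsilon/8$ for all runs and assert that $\omega_\varphi(\epsilon/8)<\kappa:=\tfrac18|\alpha_1(\epsilon)-\alpha_2(\epsilon)|$ once $\epsilon$ is small. But $\alpha_1(\epsilon)$ and $\alpha_2(\epsilon)$ are chosen scale by scale, and the natural way to get both of them to have near-maximal scale-$\epsilon$ complexity is to take $\alpha_2$ a small convex perturbation of $\alpha_1$ (this is what the paper does with $\mu_2=t\mu_1+(1-t)\nu$, $t$ close to $1$), so $|\alpha_1(\epsilon)-\alpha_2(\epsilon)|$ can be forced arbitrarily small for every $\epsilon$ and there is no reason for $\omega_\varphi(\epsilon/8)$ to beat it. With a fixed tracing precision the Birkhoff averages along your branches oscillate only within windows of radius $\kappa+\omega_\varphi(\delta)+o(1)$ about $\alpha_{i(r)}$, and if $\omega_\varphi(\delta)\ge\tfrac38|\alpha_1-\alpha_2|$ those windows overlap, so you do not get $F\subset I_\varphi$. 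The paper sidesteps this by shrinking both the deviation tolerance ($\delta_k\to0$) and the tracing precision ($\epsilon_0/2^{5+k}$) along the levels of the Moran construction while keeping $\epsilon_0$ fixed, so the error terms vanish and the oscillation amplitude converges to $|\alpha_1-\alpha_2|>0$ regardless of how small that is. To repair your version you should likewise let the tracing precision tend to $0$ with the run index (not with $\epsilon$), and re-derive (ii) accordingly.
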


	The paper is organized as follows. In Sec. \ref{section preliminary}, we introduce several preliminary lemmas. In Sec. \ref{section proof of Theorem}, we prove Theorem \ref{main}. In Sec. \ref{section 4}, we discuss some applications and examples. The proofs of some propositions and lemmas are addressed in Appendix \ref{appendix} for the sake of completeness.
	
	\section{Preliminary Lemmas}\label{section preliminary}
	In this section, we give several preliminary lemmas for the proof of our main result. These lemmas do not rely on any specification-like property.

	\subsection{Measure approximation}
	Denote $M(X)$ to be the space of all Borel probability measures on $X$, $M(X,f)$ to be the space of all $f$-invariant Borel probability measures on $X$ and denote $M^e(X,f)$ to be the space of all $f$-ergodic invariant Borel probability measures on $X$.
	
	Denote $\mathcal{P}_X$ to be the set of all finite Borel measurable partitions of $X$.
	For any  $\xi\in\mathcal{P}_X$, denote $\partial \xi$ to be the boundary of the partition,  which is the union of the boundaries of all the elements of the partition. For any finite open cover $\mathcal{U}$ of $X$, notation $\xi \succ \mathcal{U}$ means that $\xi\in \mathcal{P}_X$ and refines $\mathcal{U}$, that is, each element of $\xi$ is contained in an element of $\mathcal{U}$.
	
	The following measure approximation lemma is an intermediate step of \cite[Lemma 3.3]{LF}. For the sake of completeness, we give a proof in appendix.
	\begin{lemma}\label{lem5}
		Given a finite open cover $\mathcal{U}$ of $X$, $\varphi\in C(X,\mathbb{R})$ and a measure $\mu\in M(X,f)$. For any $\delta>0$, there exists a measure $\nu\in M(X,f)$ satisfying
		\begin{itemize}
			\item[(1)] $\nu=\sum_{i=1}^{j} \lambda_i \nu_i$, where $\lambda_i>0, \sum_{i=1}^{j} \lambda_i=1$ and $\nu_i \in  M^e(X,f)$;
			\item[(2)] $\inf _{\xi\succ \mathcal{U}} h_\mu(f, \xi)\leq\sum_{i=1}^j\lambda_i\inf _{\xi\succ \mathcal{U}} h_{\nu_i}(f, \xi)+\delta;$
			\item[(3)] $\left|\int_X \varphi d \nu-\int_X \varphi d \mu\right|<\delta$.
		\end{itemize}
	\end{lemma}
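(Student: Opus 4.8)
The plan is to pass to the ergodic decomposition of $\mu$, to show that $\nu\mapsto G(\nu):=\inf_{\xi\succ\mathcal U}h_\nu(f,\xi)$ behaves \emph{affinely} along that decomposition, and then to discretize the resulting barycentric representation in the two bounded real parameters $G(\cdot)$ and $\int_X\varphi\,d\cdot$; this follows the scheme of \cite[Lemma~3.3]{LF}. Fix once and for all a finite partition $\xi_0\in\mathcal P_X$ with $\xi_0\succ\mathcal U$ (such a partition exists since $\mathcal U$ is a finite cover); then $0\le G(\nu)\le\log|\xi_0|$ for every $\nu$, so $G$ is bounded. Write $\mu=\int_{M^e(X,f)}m\,d\tau(m)$ for the ergodic decomposition and $x\mapsto m_x$ for the ($f$-invariant, $\mu$-a.e.\ defined) ergodic-component map. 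Since $\nu\mapsto h_\nu(f,\xi)$ is affine for each fixed $\xi$, $G$ is an infimum of affine functions, hence concave, and (after the routine perturbation of partition boundaries to $\nu$-null sets) upper semicontinuous, hence Borel; the barycenter inequality for concave functions then gives $G(\mu)\ge\int G(m)\,d\tau(m)$.

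The crux is the reverse inequality $G(\mu)\le\int G(m)\,d\tau(m)$. Fix $\epsilon>0$. By a measurable-selection and separability argument one extracts a \emph{countable} family $\{\eta^{(k)}\}_{k\ge1}$ of finite partitions refining $\mathcal U$ such that for $\tau$-a.e.\ $m$ there is $k$ with $h_m(f,\eta^{(k)})<G(m)+\epsilon$; disjointifying the Borel sets $\{m:h_m(f,\eta^{(k)})<G(m)+\epsilon\}$ produces a Borel partition $\{B_k\}_{k\ge1}$ of a set of full $\tau$-measure in $M^e(X,f)$ with $h_m(f,\eta^{(k)})<G(m)+\epsilon$ for $m\in B_k$. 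Put $Y_k:=\{x:m_x\in B_k\}$, an $f$-invariant set with $\mu(Y_k)=\tau(B_k)$, and $\mu_k:=\mu(\cdot\mid Y_k)$, whose ergodic decomposition is $\tau(B_k)^{-1}\tau|_{B_k}$; thus $\mu=\sum_k\tau(B_k)\mu_k$. Choose $N$ with $\sum_{k\le N}\tau(B_k)>1-\epsilon$, set $Y_\infty:=X\setminus\bigcup_{k\le N}Y_k$, and let $\xi\in\mathcal P_X$, $\xi\succ\mathcal U$, agree with $\eta^{(k)}$ on $Y_k$ for $k\le N$ and with $\xi_0$ on $Y_\infty$. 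As $\{Y_1,\dots,Y_N,Y_\infty\}$ is a finite $f$-invariant partition coarser than $\xi$, the affinity of $\nu\mapsto h_\nu(f,\xi)$ and the ergodic decomposition of entropy for each fixed $\eta^{(k)}$ give
\begin{gather*}
h_\mu(f,\xi)=\sum_{k\le N}\tau(B_k)\,h_{\mu_k}(f,\eta^{(k)})+\mu(Y_\infty)\,h_{\mu_\infty}(f,\xi_0)\\
<\sum_{k\le N}\int_{B_k}\bigl(G(m)+\epsilon\bigr)\,d\tau(m)+\epsilon\log|\xi_0|\le\int G(m)\,d\tau(m)+\epsilon\bigl(1+\log|\xi_0|\bigr),
\end{gather*}
so $G(\mu)\le h_\mu(f,\xi)\le\int G(m)\,d\tau(m)+\epsilon\bigl(1+\log|\xi_0|\bigr)$; letting $\epsilon\to0$ and combining with the previous paragraph yields $G(\mu)=\int G(m)\,d\tau(m)$.

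With this in hand, consider $\Phi:M^e(X,f)\to\mathbb R^2$, $\Phi(m)=\bigl(G(m),\int_X\varphi\,dm\bigr)$, which is bounded and Borel; by the affinity just proved for the first coordinate and the trivial affinity of $\nu\mapsto\int\varphi\,d\nu$ for the second, the barycenter of $\Phi_*\tau$ equals $\bigl(G(\mu),\int_X\varphi\,d\mu\bigr)$. This point lies in the convex hull of the compact support of $\Phi_*\tau$, so by Carath\'eodory's theorem it is a convex combination $\sum_{i=1}^{j}\lambda_iy_i$ with $j\le3$, $\lambda_i>0$, $\sum_i\lambda_i=1$, and the $y_i$ in that support; picking $\nu_i\in M^e(X,f)$ with $\Phi(\nu_i)$ close enough to $y_i$ (possible since every neighbourhood of $y_i$ has positive $\Phi_*\tau$-measure) that $\sum_i\lambda_i\Phi(\nu_i)$ lies within $\delta$ of $\bigl(G(\mu),\int_X\varphi\,d\mu\bigr)$, and setting $\nu:=\sum_i\lambda_i\nu_i$, the first coordinate gives $\inf_{\xi\succ\mathcal U}h_\mu(f,\xi)<\sum_i\lambda_i\inf_{\xi\succ\mathcal U}h_{\nu_i}(f,\xi)+\delta$, i.e.\ (2); the second coordinate together with $\int\varphi\,d\nu=\sum_i\lambda_i\int\varphi\,d\nu_i$ gives (3); and (1) holds by construction. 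The step I expect to be the main obstacle is the reverse inequality for $G$ — equivalently, the ``convexity'' half of the affinity of $\nu\mapsto\inf_{\xi\succ\mathcal U}h_\nu(f,\xi)$ — where the Borel partition $\{B_k\}$ of the ergodic measures adapted to the decomposition of $\mu$, the associated invariant sets $Y_k$, and the extraction of a countable family of partitions by measurable selection are the delicate ingredients; the final discretization is soft (barycenter theory and Carath\'eodory in the plane).
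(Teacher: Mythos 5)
Your argument is correct in outline but takes a genuinely different route from the paper's. The paper's proof does not re-derive the identity $\inf_{\xi\succ\mathcal U}h_\mu(f,\xi)=\int_{M^e(X,f)}\inf_{\xi\succ\mathcal U}h_\tau(f,\xi)\,d\hat\mu(\tau)$; it simply cites it as \cite[Proposition~5]{HMRY} and then discretizes on the \emph{measure side}: it fixes $\beta>0$ so that $d^*$-closeness of invariant measures forces $\int\varphi$-closeness, partitions $M(X,f)$ into finitely many Borel pieces $P_1,\dots,P_j$ of $d^*$-diameter $<\beta$, picks $\nu_i\in P_i\cap M^e(X,f)$ whose local entropy is within $\delta$ of the $\hat\mu$-essential supremum of $G$ over $P_i$, and sets $\lambda_i=\hat\mu(P_i)$. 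You instead re-prove the HMRY identity from scratch and then discretize on the \emph{image side}, pushing $\tau$ forward to $\mathbb R^2$ via $\Phi=(G,\int\varphi\,d\cdot)$ and invoking Carath\'eodory. This buys a self-contained proof (modulo the measurable-selection step) and a bound $j\le 3$ on the number of ergodic pieces, at the cost of carrying out exactly the part that the paper outsources to HMRY; that measurable-selection/countable-family step is the genuinely delicate one and you rightly flag it, but you only sketch it, whereas it is precisely what \cite[Proposition~5]{HMRY} is for.

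Two smaller remarks. First, for the easy direction $G(\mu)\ge\int G(m)\,d\tau(m)$ you should not appeal to a barycenter inequality for ``concave u.s.c.''\ functions: $\nu\mapsto h_\nu(f,\xi)$ is not upper semicontinuous in general (boundaries of $\xi$ need not be $\nu$-null uniformly in $\nu$), so the u.s.c.\ claim is dubious. A cleaner route is pointwise: for every fixed $\xi\succ\mathcal U$ and every $m$ one has $h_m(f,\xi)\ge G(m)$, hence $h_\mu(f,\xi)=\int h_m(f,\xi)\,d\tau(m)\ge\int G(m)\,d\tau(m)$; now take the infimum over $\xi$. This also sidesteps the Borelness of $G$, which in your setup you only obtain \emph{a posteriori} from the countable family. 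Second, in the last paragraph you should say explicitly that if some Carath\'eodory weight vanishes you drop that index, so that $\lambda_i>0$ as required; otherwise the step is fine.
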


	\subsection{Lebesgue number of open covers}
	Recall that the Lebesgue number of an open cover $\mathcal{U}$ of $X$, denoted by $\operatorname{Leb}(\mathcal{U})$, is the largest number $\epsilon>0$ with the property that every open ball of radius $\epsilon$ is contained in an element of $\mathcal{U}$. Denote $\operatorname{diam} (\mathcal{U})=\max\{\operatorname{diam} (U_i):\ U_i\in\mathcal{U}\}$.  The following lemma can be found in \cite[Lemma 3.4]{GS}.
	
	\begin{lemma}\label{lem4}
		For every $\epsilon>0$, there exists a finite open cover $\mathcal{U}$ of $X$ such that $\operatorname{diam}(\mathcal{U}) \leq \epsilon$ and $\operatorname{Leb}(\mathcal{U}) \geq \frac{\epsilon}{4}$.
	\end{lemma}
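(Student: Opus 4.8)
The plan is to build $\mathcal{U}$ explicitly from a finite $(\epsilon/4)$-net of $X$. First I would use the compactness of $X$: the family $\{B(x,\epsilon/4):x\in X\}$ of open balls covers $X$, so it admits a finite subcover, yielding points $x_1,\dots,x_k\in X$ with $X=\bigcup_{i=1}^k B(x_i,\epsilon/4)$, where $B(x,r)$ denotes the open ball of radius $r$ centered at $x$. I then take $\mathcal{U}=\{B(x_1,\epsilon/2),\dots,B(x_k,\epsilon/2)\}$, which is a finite open cover of $X$ since each $B(x_i,\epsilon/4)$ is contained in $B(x_i,\epsilon/2)$ and the smaller balls already cover $X$.

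Next I would verify the diameter bound: for any $y,z\in B(x_i,\epsilon/2)$ the triangle inequality gives $d(y,z)\le d(y,x_i)+d(x_i,z)<\epsilon$, so $\operatorname{diam}(B(x_i,\epsilon/2))\le\epsilon$ for every $i$, and therefore $\operatorname{diam}(\mathcal{U})\le\epsilon$. For the Lebesgue number, fix any $y\in X$; since the balls $B(x_i,\epsilon/4)$ cover $X$ there is an index $i$ with $d(y,x_i)<\epsilon/4$, and then every $z$ with $d(z,y)<\epsilon/4$ satisfies $d(z,x_i)\le d(z,y)+d(y,x_i)<\epsilon/2$, i.e.\ $B(y,\epsilon/4)\subset B(x_i,\epsilon/2)\in\mathcal{U}$. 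Hence every open ball of radius $\epsilon/4$ lies in an element of $\mathcal{U}$, so $\operatorname{Leb}(\mathcal{U})\ge\epsilon/4$.

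There is no genuine obstacle here --- it is a routine compactness argument. The only point requiring a little care is the choice of constants: using radius $\epsilon/2$ for the cover and an $(\epsilon/4)$-net balances the diameter estimate $2\cdot(\epsilon/2)=\epsilon$ against the Lebesgue estimate $(\epsilon/2)-(\epsilon/4)=\epsilon/4$; one could equally start from any net finer than $\epsilon/4$.
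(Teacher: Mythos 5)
Your proof is correct, and it is the standard compactness argument for this fact; the paper does not reprove the lemma but cites it from Gutman--\'{S}piewak (where essentially the same construction from a finite $(\epsilon/4)$-net appears). Nothing to add.
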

	Given a measure $\mu\in M(X,f)$, for $\delta \in(0,1), n \in \mathbb{N}$ and $\epsilon>0$, denote $\widetilde{N}_\mu^\delta(n, \epsilon)$ to be the smallest number of sets with diameter at most $\epsilon$ in the metric $d_n$, whose union has $\mu$-measure larger than $1-\delta$. Denote $N_\mu^\delta(n, \epsilon)$  to be the smallest number of any $(n, \epsilon)$-balls, whose union has $\mu$-measure larger than $1-\delta$. It is clear that
	\begin{equation}\label{3}
		\widetilde{N}_\mu^\delta(n, 2 \epsilon) \leq N_\mu^\delta(n, \epsilon) \leq \widetilde{N}_\mu^\delta(n, \epsilon) .
	\end{equation}
	Moreover, we have the following inequality.
	\begin{lemma}\cite[Lemma 8]{S}\label{lem6}
		Let $\mu\in M^e(X,f)$ and $\mathcal{U}$ be a finite open cover of $X$ with $\operatorname{diam}(\mathcal{U}) \leq \epsilon_1$ and $\operatorname{Leb}(\mathcal{U}) \geq \epsilon_2$. Then  for $\delta \in(0,1)$,
		$$
		\widetilde{N}_\mu^\delta\left(n, \epsilon_1\right) \leq \mathcal{N}^ \delta_\mu\left(\mathcal{U}^n\right) \leq N_\mu^\delta\left(n, \epsilon_2\right),
		$$
		where $\mathcal{N}^ \delta_\mu\left(\mathcal{U}^n\right)$ is the smallest number of elements of $\mathcal{U}^n:=\bigvee_{j=0}^{n-1}f^{-j}\mathcal{U}$ needed to cover a subset of $X$ whose $\mu$-measure is at least $1-\delta$.
	\end{lemma}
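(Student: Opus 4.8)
The plan is to establish the two inequalities separately by converting the three combinatorial quantities into one another through two elementary set-containments: every atom of $\mathcal{U}^n$ is small in the $d_n$-metric, and every $(n,\epsilon_2)$-Bowen ball sits inside an atom of $\mathcal{U}^n$. Ergodicity of $\mu$ plays no role in this argument; only the bounds $\operatorname{diam}(\mathcal{U})\le\epsilon_1$ and $\operatorname{Leb}(\mathcal{U})\ge\epsilon_2$ are used.

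For the left inequality $\widetilde{N}_\mu^\delta(n,\epsilon_1)\le\mathcal{N}_\mu^\delta(\mathcal{U}^n)$, I would first observe that every element $A\in\mathcal{U}^n$ has $d_n$-diameter at most $\epsilon_1$. Indeed, writing $A=\bigcap_{j=0}^{n-1}f^{-j}U_{i_j}$ with $U_{i_j}\in\mathcal{U}$, any two points $x,y\in A$ satisfy $f^j(x),f^j(y)\in U_{i_j}$, hence $d(f^j(x),f^j(y))\le\operatorname{diam}(U_{i_j})\le\operatorname{diam}(\mathcal{U})\le\epsilon_1$ for all $0\le j\le n-1$, so $d_n(x,y)\le\epsilon_1$. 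Consequently, any subfamily of $\mathcal{U}^n$ whose union has $\mu$-measure at least $1-\delta$ is, in particular, a family of sets of $d_n$-diameter $\le\epsilon_1$ with the same covering property, which yields the claimed bound. (The mismatch between the ``$>1-\delta$'' in the definition of $\widetilde{N}_\mu^\delta$ and the ``$\ge1-\delta$'' in the definition of $\mathcal{N}_\mu^\delta$ is a routine bookkeeping matter, absorbed either by shrinking $\delta$ slightly or by noting that a minimal covering subfamily of $\mathcal{U}^n$ can be taken with union of measure strictly exceeding $1-\delta$.)

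For the right inequality $\mathcal{N}_\mu^\delta(\mathcal{U}^n)\le N_\mu^\delta(n,\epsilon_2)$, the key point is that $\operatorname{Leb}(\mathcal{U})\ge\epsilon_2$ forces every open ball $B(z,\epsilon_2)$ to be contained in some element of $\mathcal{U}$. Hence for any $x\in X$,
\begin{equation*}
B_n(x,\epsilon_2)=\bigcap_{j=0}^{n-1}f^{-j}B(f^j(x),\epsilon_2)\subset\bigcap_{j=0}^{n-1}f^{-j}U_{i_j},
\end{equation*}
where each $U_{i_j}\in\mathcal{U}$ is chosen so that $B(f^j(x),\epsilon_2)\subset U_{i_j}$, and the right-hand side is an element of $\mathcal{U}^n$. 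Therefore, starting from a collection of $N_\mu^\delta(n,\epsilon_2)$ Bowen balls $B_n(x_i,\epsilon_2)$ whose union has $\mu$-measure larger than $1-\delta$, replacing each $B_n(x_i,\epsilon_2)$ by an element of $\mathcal{U}^n$ containing it produces a subfamily of $\mathcal{U}^n$ of cardinality at most $N_\mu^\delta(n,\epsilon_2)$ whose union still has measure larger than, hence at least, $1-\delta$; this gives the bound.

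Since $\mathcal{U}^n$ is a finite open cover, all three quantities are finite and the chain is meaningful. I do not expect a genuine obstacle here: the only mildly delicate points are the precise convention for the Lebesgue number at the endpoint radius $\epsilon_2$ (open versus closed balls of radius exactly $\epsilon_2$) and the strict-versus-non-strict measure inequalities, both of which are handled by an arbitrarily small perturbation of the parameters. Accordingly I anticipate a short write-up.
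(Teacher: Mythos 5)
The paper does not prove this lemma; it is imported verbatim from Shi's paper (Lemma~8 of the reference \cite{S}), so there is no in-paper proof to compare against. Your argument is correct and is the standard one: the two containments you isolate --- every atom of $\mathcal{U}^n$ has $d_n$-diameter at most $\operatorname{diam}(\mathcal{U})\le\epsilon_1$, and every Bowen ball $B_n(x,\epsilon_2)$ lies in some atom of $\mathcal{U}^n$ because $\operatorname{Leb}(\mathcal{U})\ge\epsilon_2$ --- are exactly what is needed, and the passage from a covering by Bowen balls of measure $>1-\delta$ to a covering by atoms of measure $\ge 1-\delta$ in the right inequality is clean. Your observation that ergodicity (indeed even $f$-invariance) of $\mu$ is not used is also correct; the hypothesis $\mu\in M^e(X,f)$ appears in the statement only because the lemma is subsequently combined with Katok's entropy formula (Lemma~\ref{lem8}) and Shapira's theorem (Lemma~\ref{lem7}), both of which do require ergodicity. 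The one place you wave your hands --- the mismatch between ``larger than $1-\delta$'' in the definitions of $\widetilde N_\mu^\delta$ and $N_\mu^\delta$ versus ``at least $1-\delta$'' in the definition of $\mathcal N_\mu^\delta$, and the open-ball-of-radius-exactly-$\epsilon_2$ convention for $\operatorname{Leb}(\mathcal{U})$ --- is a genuine wrinkle in the left inequality as literally stated, but it is harmless here: the lemma is only ever used after passing to $\liminf_{n\to\infty}$ and then $\epsilon\to 0$, so an off-by-one correction would wash out, and in the source \cite{S} the thresholds are set up so that no such correction is needed.
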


	\subsection{Measure-theoretic entropy}
	Given $\mu \in M(X,f)$ and a finite Borel measurable partition $\xi=\{C_1,\ldots,C_k\}$$\in\mathcal{P}_X$, let $ H_\mu(\xi)=-\sum_{i=1}^k \mu\left(C_i\right) \log \mu\left(C_i\right).$
	Then the Measure-theoretic entropy of $\mu$ is given by
	\begin{equation*}
		h_\mu(f)=\sup_{\xi\in\mathcal{P}_X} h_\mu(f, \xi),
	\end{equation*}where $h_\mu(f, \xi)=\lim _{n \to\infty} \frac{1}{n} H_\mu(\bigvee_{j=0}^{n-1} f^{-j}\xi)$.
	The following is the Katok entropy formula.
	\begin{lemma}\cite[Theorem I.I]{K}\label{lem8}
		Let $\mu \in M^e(X,f)$. Then for any $\delta\in (0,1)$,
		\begin{align*}
			h_{\mu}(f) & =\lim_{\epsilon \rightarrow 0}\underline{h}_\mu(f,\epsilon,\delta)=\lim_{\epsilon \rightarrow 0}\overline{h}_\mu(f,\epsilon,\delta),
		\end{align*}where
		\begin{equation}\label{def underbar h}
			\underline{h}_\mu(f,\epsilon,\delta)=\liminf_{n\to\infty}\frac{\log N^\delta_{\mu}(n,\epsilon)}{n}\mbox{ and }
			\overline{h}_\mu(f,\epsilon,\delta)=\limsup_{n\to\infty}\frac{\log N^\delta_{\mu}(n,\epsilon)}{n}.
		\end{equation}
	\end{lemma}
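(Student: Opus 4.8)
The statement is the Katok entropy formula (due to Katok), and the plan is to establish the two one-sided bounds
\[
\overline{h}_\mu(f,\epsilon,\delta)\le h_\mu(f)\ \text{ for every }\epsilon>0,\qquad
\liminf_{\epsilon\to 0}\underline{h}_\mu(f,\epsilon,\delta)\ge h_\mu(f),
\]
and then to note that, since always $\underline{h}_\mu(f,\epsilon,\delta)\le\overline{h}_\mu(f,\epsilon,\delta)$, the chain $h_\mu(f)\le\liminf_{\epsilon\to0}\underline{h}_\mu(f,\epsilon,\delta)\le\limsup_{\epsilon\to0}\overline{h}_\mu(f,\epsilon,\delta)\le h_\mu(f)$ squeezes all four $\epsilon\to0$ limits to $h_\mu(f)$. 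Throughout I would write $\xi_n:=\bigvee_{j=0}^{n-1}f^{-j}\xi$ and $\xi_n(x)$ for the atom of $\xi_n$ containing $x$, and lean on two tools: the Shannon--McMillan--Breiman theorem ($-\tfrac1n\log\mu(\xi_n(x))\to h_\mu(f,\xi)$ $\mu$-a.e.) and Birkhoff's ergodic theorem, each combined with Egorov's theorem to upgrade a.e.\ convergence to uniform convergence off a small set. The case $h_\mu(f)=+\infty$ is handled by carrying an arbitrarily large constant $R$ through the lower bound in place of $h_\mu(f)-\eta$.

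For the upper bound I would fix $\epsilon>0$ and pick, by compactness, a partition $\xi\in\mathcal{P}_X$ with $\operatorname{diam}(\xi)<\epsilon$. Two points in a common atom of $\xi_n$ have their first $n$ iterates in common atoms of $\xi$, so they are $d_n$-closer than $\epsilon$; hence each atom of $\xi_n$ sits inside an $(n,\epsilon)$-ball centred at any of its points. By Shannon--McMillan--Breiman, for any $\gamma>0$ and all large $n$ the set where $\mu(\xi_n(x))>e^{-n(h_\mu(f,\xi)+\gamma)}$ has measure $>1-\delta$ and is a union of at most $e^{n(h_\mu(f,\xi)+\gamma)}$ atoms of $\xi_n$, hence of that many $(n,\epsilon)$-balls; so $N^\delta_\mu(n,\epsilon)\le e^{n(h_\mu(f,\xi)+\gamma)}$, and letting $n\to\infty$ and then $\gamma\to0$ gives $\overline{h}_\mu(f,\epsilon,\delta)\le h_\mu(f,\xi)\le h_\mu(f)$.

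For the lower bound --- the substantive half --- I would fix $\eta>0$ and, using the standard fact that finite partitions with $\mu$-null boundary generate the Borel $\sigma$-algebra mod $\mu$ (so that the supremum of $h_\mu(f,\cdot)$ over them equals $h_\mu(f)$), choose $\xi=\{P_1,\dots,P_k\}\in\mathcal{P}_X$ with $\mu(\partial\xi)=0$ and $h_\mu(f,\xi)>h_\mu(f)-\eta$. Setting $Q_\rho:=\bigcup_j\{x\in P_j:\ d(x,X\setminus P_j)<\rho\}$, the sets $Q_\rho$ decrease as $\rho\downarrow0$ to a subset of $\partial\xi$, so $\mu(Q_\rho)\to0$, and I would fix $\rho$ with $2\mu(Q_\rho)\log k<\eta$ and restrict to $\epsilon<\rho/2$. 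By Shannon--McMillan--Breiman, Birkhoff's theorem for $\mathbf{1}_{Q_\rho}$, and Egorov, fix a set $G$ with $\mu(G)>1-\delta'$ ($\delta'<1-\delta$ small and prescribed) and an $N_0$ so that for $n\ge N_0$ and $x\in G$ one has $\mu(\xi_n(x))\le e^{-n(h_\mu(f,\xi)-\gamma)}$ and $\#\{0\le j<n:f^jx\in Q_\rho\}\le 2n\mu(Q_\rho)$. Given $A$ with $\mu(A)>1-\delta$ and $A\subset\bigcup_{i=1}^N B_n(x_i,\epsilon)$, $N=N^\delta_\mu(n,\epsilon)$, the key estimate is that each $B_n(x_i,\epsilon)$ meets at most $k^{\lceil 2n\mu(Q_\rho)\rceil}$ of the atoms of $\xi_n$ that meet $A\cap G$: if $x^\ast,y\in B_n(x_i,\epsilon)\cap A\cap G$ then $d_n(x^\ast,y)<2\epsilon<\rho$, so whenever $f^jx^\ast\notin Q_\rho$ the ball $B(f^jx^\ast,\rho)$ lies in a single atom of $\xi$ and contains $f^jy$, forcing the $\xi$-names of $x^\ast$ and $y$ to agree off the $\le 2n\mu(Q_\rho)$ times $x^\ast$ visits $Q_\rho$, so $\xi_n(y)$ ranges over at most $k^{\lceil 2n\mu(Q_\rho)\rceil}$ atoms. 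Summing over $i$, $A\cap G$ meets at most $Nk^{\lceil 2n\mu(Q_\rho)\rceil}$ atoms of $\xi_n$; on the other hand each such atom has measure $\le e^{-n(h_\mu(f,\xi)-\gamma)}$ and together they cover $A\cap G$, so there are at least $(1-\delta-\delta')e^{n(h_\mu(f,\xi)-\gamma)}$ of them. Comparing, taking $\tfrac1n\log$, $n\to\infty$ and then $\gamma\to0$ yields $\underline{h}_\mu(f,\epsilon,\delta)\ge h_\mu(f,\xi)-2\mu(Q_\rho)\log k>h_\mu(f)-2\eta$ for every $\epsilon<\rho/2$, hence $\liminf_{\epsilon\to0}\underline{h}_\mu(f,\epsilon,\delta)\ge h_\mu(f)$ since $\eta$ was arbitrary.

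I expect the only real obstacle to be this counting estimate in the lower bound: a priori a single dynamical $\epsilon$-ball could intersect exponentially many atoms of $\xi_n$, and the content is that it intersects at most $k$ extra atoms for each coordinate at which $2\epsilon$-closeness fails to pin down the $\xi$-label --- and those coordinates are exactly the orbit's visits to the boundary collar $Q_\rho$, which Birkhoff's theorem makes an asymptotically $\mu(Q_\rho)$-fraction, arranged negligible. Everything else is bookkeeping with Shannon--McMillan--Breiman. A shorter, less self-contained alternative would replace the preceding paragraph by an appeal to the Brin--Katok local entropy formula: Egorov then furnishes a set of measure $>1-\delta'$ on which $\mu(B_n(x,\epsilon))\le e^{-n(h_\mu(f)-\eta)}$ for small $\epsilon$ and large $n$, and $\mu(A)\le\sum_i\mu(B_n(x_i,\epsilon))$ forces $N\ge(1-\delta-\delta')e^{n(h_\mu(f)-\eta)}$ in one line. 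Two routine points to record along the way are the reduction to null-boundary partitions and the $h_\mu(f)=\infty$ case.
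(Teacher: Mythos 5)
The paper supplies no proof of this lemma; it simply cites Katok's theorem (\cite{K}), and your proposal is a correct reconstruction of Katok's original argument. The upper bound via Shannon--McMillan--Breiman and the lower bound via a null-boundary partition, the collar $Q_\rho$, and Birkhoff control of visits to $Q_\rho$ is exactly the standard route, and the crucial counting step (each $(n,\epsilon)$-ball meets at most $k^{\lceil 2n\mu(Q_\rho)\rceil}$ of the relevant $\xi_n$-atoms) is stated and used correctly; the only point to flag is that your Brin--Katok shortcut needs a small adjustment --- recentre each ball at a point of $A\cap G$ and double $\epsilon$ --- since the given centres $x_i$ need not lie in the Egorov set.
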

	
	\begin{lemma}\cite[Theorem 4.4]{Shapira2007}\label{lem7}
		Let $\mathcal{U}$ be a finite open cover of $X$ and $\mu \in M^e(X,f)$. For any $\delta\in (0,1)$, one has
		$$
		\lim_{n \to \infty}\frac{1}{n}\log\mathcal{N}^\delta_\mu\left(\mathcal{U}^n\right)=\inf _{\xi\succ \mathcal{U}} h_\mu(f,\xi).
		$$
	\end{lemma}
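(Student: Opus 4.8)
Write $h_\mu(f,\mathcal U):=\inf_{\xi\succ\mathcal U}h_\mu(f,\xi)$ for the quantity on the right. I would prove the two inequalities $\limsup_n\frac1n\log\mathcal N^\delta_\mu(\mathcal U^n)\le h_\mu(f,\mathcal U)$ and $\liminf_n\frac1n\log\mathcal N^\delta_\mu(\mathcal U^n)\ge h_\mu(f,\mathcal U)$ separately; as both would hold for every $\delta\in(0,1)$, the limit then exists and equals $h_\mu(f,\mathcal U)$ for each such $\delta$.

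\textbf{Upper bound.} Fix a finite Borel partition $\xi\succ\mathcal U$ and $\tau>0$. By the Shannon--McMillan--Breiman theorem (this is where ergodicity of $\mu$ is used), $-\frac1n\log\mu(\xi^n(x))\to h_\mu(f,\xi)$ for $\mu$-a.e.\ $x$, where $\xi^n:=\bigvee_{j=0}^{n-1}f^{-j}\xi$ and $\xi^n(x)$ denotes the atom of $\xi^n$ through $x$. By Egorov's theorem there are a set $A$ with $\mu(A)>1-\delta$ and an $N_0$ with $\mu(\xi^n(x))\ge e^{-n(h_\mu(f,\xi)+\tau)}$ for all $n\ge N_0$ and $x\in A$. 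The atoms of $\xi^n$ meeting $A$ are pairwise disjoint, each of measure $\ge e^{-n(h_\mu(f,\xi)+\tau)}$, and their union contains $A$, so there are at most $e^{n(h_\mu(f,\xi)+\tau)}$ of them; and since $\xi\succ\mathcal U$, each atom of $\xi^n$ lies inside an element of $\mathcal U^n$. Picking one such element per atom exhibits at most $e^{n(h_\mu(f,\xi)+\tau)}$ elements of $\mathcal U^n$ whose union has $\mu$-measure $>1-\delta$, so $\mathcal N^\delta_\mu(\mathcal U^n)\le e^{n(h_\mu(f,\xi)+\tau)}$ for $n\ge N_0$. Sending $\tau\downarrow0$ and then taking the infimum over $\xi\succ\mathcal U$ gives the claimed $\limsup$ bound.

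\textbf{Lower bound.} Here I would argue by contradiction. If $L:=\liminf_n\frac1n\log\mathcal N^\delta_\mu(\mathcal U^n)<h_\mu(f,\mathcal U)$, fix $\tau>0$ with $L+3\tau<h_\mu(f,\mathcal U)$; along a subsequence there are $C^{(n)}_1,\dots,C^{(n)}_{m_n}\in\mathcal U^n$ with $m_n\le e^{n(L+\tau)}$ covering a set $B_n$ of measure $\ge1-\delta$. From this data, for a suitably chosen fixed $n$, I would build a finite partition $\xi\succ\mathcal U$ with $h_\mu(f,\xi)\le L+3\tau$, contradicting $\xi\succ\mathcal U$: disjointify the $C^{(n)}_i$ into a partition of $\bigcup_iC^{(n)}_i$ refining $\mathcal U^n$, transport it around $X$ by a Rokhlin tower of height $n$ (if $h_\mu(f,\mathcal U)=0$ there is nothing to prove, so $\mu$ may be taken aperiodic), record over each length-$n$ block only the index $i$ of the cell containing that orbit segment, and treat the complement of $\bigcup_iC^{(n)}_i$ and the tower gap by a fixed auxiliary partition refining $\mathcal U$; a name count then bounds $\frac1nH_\mu(\xi^n)$, hence $h_\mu(f,\xi)=\inf_n\frac1nH_\mu(\xi^n)$, by $\frac1n\log m_n$ plus a term accounting for the exceptional pieces.

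\textbf{Main obstacle.} The hard part will be bounding that last term. Because $\delta$ is a fixed positive constant, the complement of $\bigcup_iC^{(n)}_i$ and the tower gap have measure bounded below by a constant, and the naive estimate for the $\mathcal U^n$-behaviour over them contributes an entropy rate of order $\delta\log|\mathcal U|$, which does not tend to $0$. Reducing this contribution below $\tau$ — so that the argument closes for every $\delta\in(0,1)$ and not merely for small $\delta$ — is the delicate step, and is where the argument of Shapira \cite{Shapira2007} does real work (for instance by iterating the covering/partitioning step, or by choosing the $(1-\delta)$-set $B_n$ to be aligned with a near-optimal partition so that its complement carries little conditional entropy). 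Everything else is routine.
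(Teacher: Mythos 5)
The paper does not prove this lemma; it is quoted verbatim from Shapira (Theorem~4.4 of \cite{Shapira2007}), so there is no in-paper proof to compare against. Evaluating your sketch on its own merits: the upper bound is correct and complete. For fixed $\xi\succ\mathcal U$, Shannon--McMillan--Breiman plus Egorov gives a set $A$ with $\mu(A)>1-\delta$ on which every $\xi^n$-atom has measure at least $e^{-n(h_\mu(f,\xi)+\tau)}$ once $n$ is large, so at most $e^{n(h_\mu(f,\xi)+\tau)}$ atoms meet $A$; since $\xi\succ\mathcal U$ implies $\xi^n\succ\mathcal U^n$, choosing one containing element per atom yields $\mathcal N^\delta_\mu(\mathcal U^n)\le e^{n(h_\mu(f,\xi)+\tau)}$, and the claimed $\limsup$ bound follows on sending $\tau\downarrow0$ and infimizing over $\xi$. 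This works for every $\delta\in(0,1)$.

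The lower bound, however, has a genuine gap, and it is exactly the one you flag --- but it is worth being precise about why it is fatal rather than merely technical. Since $\mathcal N^\delta_\mu(\mathcal U^n)$ is \emph{non-increasing} in $\delta$, the lower bound $\liminf_n\frac1n\log\mathcal N^\delta_\mu(\mathcal U^n)\ge h_\mu(f,\mathcal U)$ is hardest when $\delta$ is close to $1$: it asserts that any family of elements of $\mathcal U^n$ covering even a tiny positive measure already has exponential size at least $e^{n(h-\tau)}$. A lower bound proved only for small $\delta$ therefore does not imply the statement for the remaining $\delta$; the monotonicity goes the wrong way. Your Rokhlin-tower construction leaves an exceptional set of measure at least $\delta$ on which you fall back to a crude refinement of $\mathcal U$, contributing $\delta\log|\mathcal U|$ to the entropy rate, which cannot be made smaller than $\tau$ unless $\delta$ is small. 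To close the argument for all $\delta\in(0,1)$ you must use ergodicity in an essential way to show that the exponential count of $\mathcal U^n$-names is $\mu$-almost everywhere the same constant, so that a positive-measure set carries the full count. This is the content that your sketch is missing, and it is where the real difficulty of the theorem lies --- whether one does it by a Katok-style concatenation along long orbit segments, or (as Shapira does) via a Shannon--McMillan--Breiman type theorem adapted to the sequence $\inf_{\xi\succ\mathcal U^n}H_\mu(\xi)$. The tower bookkeeping by itself cannot produce $\delta$-independence.
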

	\begin{lemma}\label{lemma leq leq}
		Let $\mu\in M^e(X,f)$ and $\mathcal{U}$ be a finite open cover of $X$ with $\operatorname{diam}(\mathcal{U}) \leq \epsilon_1$ and $\operatorname{Leb}(\mathcal{U}) \geq \epsilon_2$. Then for any $\delta\in(0,1)$, one has
		\begin{equation}\label{eq lemma2.6}
			\underbar{h}_\mu(f,\epsilon_1,\delta)\leq \inf_{\xi\succ \mathcal{U}}h_{\mu}(f,\xi)\leq \underline{h}_{\mu}(f,\epsilon_2,\delta).
		\end{equation}
	\end{lemma}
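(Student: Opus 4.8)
The plan is to obtain both inequalities of \eqref{eq lemma2.6} by passing to the limit (more precisely, to the $\liminf$) in the two-sided estimate of Lemma \ref{lem6}, using Lemma \ref{lem7} to identify the middle quantity and \eqref{3} to convert between the covering numbers $\widetilde N_\mu^\delta$ and $N_\mu^\delta$. Fix $\delta\in(0,1)$.

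First I would handle the right-hand inequality. Lemma \ref{lem6} gives, for every $n\in\mathbb{N}$,
\begin{equation*}
	\mathcal{N}^\delta_\mu(\mathcal{U}^n)\leq N_\mu^\delta(n,\epsilon_2).
\end{equation*}
Taking $\frac1n\log(\cdot)$ and then $\liminf_{n\to\infty}$ on both sides, the left side converges to $\inf_{\xi\succ\mathcal{U}}h_\mu(f,\xi)$ by Lemma \ref{lem7}, while the $\liminf$ of the right side is exactly $\underline h_\mu(f,\epsilon_2,\delta)$ by the definition \eqref{def underbar h}. Hence $\inf_{\xi\succ\mathcal{U}}h_\mu(f,\xi)\leq \underline h_\mu(f,\epsilon_2,\delta)$.

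For the left-hand inequality, Lemma \ref{lem6} gives $\widetilde N_\mu^\delta(n,\epsilon_1)\leq \mathcal{N}^\delta_\mu(\mathcal{U}^n)$ for every $n$; taking $\frac1n\log$ and $\liminf$ and again invoking Lemma \ref{lem7} yields
\begin{equation*}
	\liminf_{n\to\infty}\frac{\log\widetilde N_\mu^\delta(n,\epsilon_1)}{n}\leq \inf_{\xi\succ\mathcal{U}}h_\mu(f,\xi).
\end{equation*}
It remains to compare $\widetilde N_\mu^\delta(n,\epsilon_1)$ with $N_\mu^\delta(n,\epsilon_1)$: from the right inequality in \eqref{3} we have $N_\mu^\delta(n,\epsilon_1)\leq \widetilde N_\mu^\delta(n,\epsilon_1)$, so $\underline h_\mu(f,\epsilon_1,\delta)=\liminf_{n\to\infty}\frac{\log N_\mu^\delta(n,\epsilon_1)}{n}\leq \liminf_{n\to\infty}\frac{\log\widetilde N_\mu^\delta(n,\epsilon_1)}{n}$, which chains with the previous display to give $\underline h_\mu(f,\epsilon_1,\delta)\leq \inf_{\xi\succ\mathcal{U}}h_\mu(f,\xi)$. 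Combining the two inequalities proves \eqref{eq lemma2.6}.

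There is no genuine obstacle here; the statement is essentially a repackaging of Lemmas \ref{lem6} and \ref{lem7}. The only points requiring a little care are keeping track of the direction of the inequalities when applying $\liminf$ (which is monotone, so this is automatic) and remembering that $\liminf$ of the middle term is in fact a genuine limit thanks to Lemma \ref{lem7}, so no information is lost when sandwiching.
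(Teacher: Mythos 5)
Your proposal is correct and follows essentially the same route as the paper: chaining the two-sided bound of Lemma \ref{lem6} with \eqref{3} to sandwich $\mathcal{N}^\delta_\mu(\mathcal{U}^n)$ between $N_\mu^\delta(n,\epsilon_1)$ and $N_\mu^\delta(n,\epsilon_2)$, then taking $\frac{1}{n}\log$ and $\liminf$ and invoking Lemma \ref{lem7} to identify the middle term. The paper's proof is just a compressed version of your argument (it also nominally cites Lemma \ref{lem8}, but only to recall the definition of $\underline{h}_\mu$ in \eqref{def underbar h}).
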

	\begin{proof}
		By Lemma \ref{lem6} and \eqref{3}, one has
		\[N_{\mu}^\delta\left(n, \epsilon_1\right) \leq \mathcal{N}^ \delta_{\mu}\left(\mathcal{U}^n\right) \leq N_{\mu}^\delta\left(n, \epsilon_2\right).\]
		Taking Lemma \ref{lem8} and Lemma \ref{lem7} into account, one immediately obtains \eqref{eq lemma2.6}.
	\end{proof}
	
	\subsection{Variational principles}
	The following variational principle for the metric mean dimension  was obtained by Gutman and \'{S}piewak \cite{GS}.
	\begin{lemma}\cite[Theorem 3.1]{GS}\label{lemma 2.7}
		Let $(X,d,f)$ be a TDS. Then
		\begin{equation*}
			\overline{\operatorname{mdim}}_{\mathrm{M}}(X, f, d)=\limsup _{\epsilon \rightarrow 0} \frac{1}{|\log \epsilon|} \sup _{\mu \in M(X,f)} \inf _{|\xi|<\epsilon} h_\mu(f, \xi)
		\end{equation*}and
		\begin{equation*}
			\underline{\operatorname{mdim}}_{\mathrm{M}}(X, f, d)=\liminf _{\epsilon \rightarrow 0} \frac{1}{|\log \epsilon|} \sup _{\mu \in M(X,f)} \inf _{|\xi|<\epsilon} h_\mu(f, \xi) ,
		\end{equation*} where $|\xi|$ denotes the diameter of the partition $\xi\in\mathcal{P}_X$ and the infimum is taken over $\xi\in\mathcal{P}_X$ satisfying  $|\xi|<\epsilon$.
	\end{lemma}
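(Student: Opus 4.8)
The plan is to reduce both identities to a single pointwise comparison of scales. Set $A(\epsilon):=\sup_{\mu\in M(X,f)}\inf_{|\xi|<\epsilon}h_\mu(f,\xi)$, which is non-increasing in $\epsilon$, and recall $\overline{\operatorname{mdim}}_{\mathrm M}(X,f,d)=\limsup_{\epsilon\to0}h_{top}(f,X,\epsilon)/|\log\epsilon|$ and $\underline{\operatorname{mdim}}_{\mathrm M}(X,f,d)=\liminf_{\epsilon\to0}h_{top}(f,X,\epsilon)/|\log\epsilon|$. I would prove that there is an absolute constant $c>1$ with
\[
h_{top}(f,X,\epsilon)\ \le\ A(\epsilon/4)\ \le\ h_{top}(f,X,\epsilon/c)
\]
for all sufficiently small $\epsilon>0$. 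Granting this, divide through by $|\log\epsilon|$; since $A(\cdot)\ge0$, $h_{top}(f,X,\cdot)\ge0$ and $|\log(\lambda\epsilon)|/|\log\epsilon|\to1$ as $\epsilon\to0$ for every fixed $\lambda>0$, taking $\limsup_{\epsilon\to0}$ (resp.\ $\liminf_{\epsilon\to0}$) of the three members yields $\limsup_{\epsilon\to0}A(\epsilon)/|\log\epsilon|=\overline{\operatorname{mdim}}_{\mathrm M}(X,f,d)$ (resp.\ the $\liminf$ identity), which is exactly the claim.

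To establish the sandwich I would interpose finite open covers. For a finite open cover $\mathcal U$ of $X$ let $\mathcal N(\mathcal U^n)$ be the least number of elements of $\mathcal U^n=\bigvee_{j=0}^{n-1}f^{-j}\mathcal U$ needed to cover $X$, and put $h_{cov}(\mathcal U):=\lim_{n\to\infty}\frac1n\log\mathcal N(\mathcal U^n)$ (the limit exists because $n\mapsto\log\mathcal N(\mathcal U^n)$ is subadditive). I would combine three ingredients. First, the elementary counting comparison
\[
s(f,X,n,\operatorname{diam}(\mathcal U))\ \le\ \mathcal N(\mathcal U^n)\ \le\ s\!\left(f,X,n,\tfrac{\operatorname{Leb}(\mathcal U)}{2}\right):
\]
the left inequality holds because an $(n,\operatorname{diam}(\mathcal U))$-separated set meets each atom of $\mathcal U^n$ in at most one point, and the right one because a maximal $(n,\operatorname{Leb}(\mathcal U)/2)$-separated set is $(n,\operatorname{Leb}(\mathcal U)/2)$-spanning, so the balls $B_n(x,\operatorname{Leb}(\mathcal U))$ centred on such a set cover $X$ while each is contained, by the Lebesgue-number property, in a single atom of $\mathcal U^n$; passing to exponential growth rates, $h_{top}(f,X,\operatorname{diam}(\mathcal U))\le h_{cov}(\mathcal U)\le h_{top}(f,X,\operatorname{Leb}(\mathcal U)/2)$. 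Second, the inclusions $\{\xi\in\mathcal P_X:|\xi|<\operatorname{Leb}(\mathcal U)\}\subseteq\{\xi\in\mathcal P_X:\xi\succ\mathcal U\}\subseteq\{\xi\in\mathcal P_X:|\xi|<2\operatorname{diam}(\mathcal U)\}$ (the last requiring only $\operatorname{diam}(\mathcal U)>0$, which we may assume, the discrete case being trivial), which give, for each $\mu\in M(X,f)$, $\inf_{|\xi|<2\operatorname{diam}(\mathcal U)}h_\mu(f,\xi)\le\inf_{\xi\succ\mathcal U}h_\mu(f,\xi)\le\inf_{|\xi|<\operatorname{Leb}(\mathcal U)}h_\mu(f,\xi)$ and hence, after $\sup_\mu$, $A(2\operatorname{diam}(\mathcal U))\le\sup_{\mu\in M(X,f)}\inf_{\xi\succ\mathcal U}h_\mu(f,\xi)\le A(\operatorname{Leb}(\mathcal U))$. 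Third, the local variational principle for open covers,
\[
h_{cov}(\mathcal U)\ =\ \sup_{\mu\in M(X,f)}\inf_{\xi\succ\mathcal U}h_\mu(f,\xi),
\]
whose inequality $\ge$ follows for ergodic $\mu$ from Lemma \ref{lem7} and $\mathcal N^\delta_\mu(\mathcal U^n)\le\mathcal N(\mathcal U^n)$ (and from affinity of $\mu\mapsto\inf_{\xi\succ\mathcal U}h_\mu(f,\xi)$ for general $\mu$), while $\le$ is Romagnoli's local variational principle (see \cite{GS} and the references therein).

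With these, the sandwich is pure scale-bookkeeping. Fix $\epsilon>0$ and, via Lemma \ref{lem4}, pick a finite open cover $\mathcal U$ with $\operatorname{diam}(\mathcal U)\le\epsilon$ and $\operatorname{Leb}(\mathcal U)\ge\epsilon/4$. For the left half, chain $h_{top}(f,X,\epsilon)\le h_{top}(f,X,\operatorname{diam}(\mathcal U))\le h_{cov}(\mathcal U)=\sup_\mu\inf_{\xi\succ\mathcal U}h_\mu(f,\xi)\le A(\operatorname{Leb}(\mathcal U))\le A(\epsilon/4)$, using monotonicity of $h_{top}(f,X,\cdot)$ and of $A$. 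For the right half, chain in the opposite order, $A(2\epsilon)\le A(2\operatorname{diam}(\mathcal U))\le\sup_\mu\inf_{\xi\succ\mathcal U}h_\mu(f,\xi)=h_{cov}(\mathcal U)\le h_{top}(f,X,\operatorname{Leb}(\mathcal U)/2)\le h_{top}(f,X,\epsilon/8)$, and then replace $\epsilon$ by $\epsilon/8$ to get $A(\epsilon/4)\le h_{top}(f,X,\epsilon/64)$; thus the displayed sandwich holds with $c=64$.

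The only non-elementary input, and the step I expect to carry the real difficulty, is the third ingredient: identifying the topological quantity $h_{cov}(\mathcal U)$ with $\sup_{\mu}\inf_{\xi\succ\mathcal U}h_\mu(f,\xi)$ requires Romagnoli's description of $\inf_{\xi\succ\mathcal U}h_\mu(f,\xi)$ as the growth rate of $\inf\{H_\mu(\zeta):\zeta\succ\mathcal U^n\}$, together with the affinity in $\mu$ that permits replacing the supremum over all invariant measures by that over ergodic ones (after which Lemma \ref{lem7} applies) — this is the local entropy theory of Glasner--Weiss and Huang--Ye. An alternative to invoking it wholesale would be to work with ergodic measures throughout, using Lemma \ref{lem8} (Katok) and Lemma \ref{lemma leq leq} in place of Lemma \ref{lem7}, but one would then still have to face exactly this passage from ergodic to general invariant measures. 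Everything else — the covering/separating estimates, the manipulations with Lemma \ref{lem4}, and the logarithmic asymptotics — is routine.
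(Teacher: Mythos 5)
The paper does not prove this lemma; it is quoted verbatim from Gutman--\'{S}piewak \cite[Theorem 3.1]{GS}, so there is no in-paper proof to compare against. Your argument is correct and is essentially a reconstruction of the proof in \cite{GS}: the scale sandwich via a cover $\mathcal U$ with $\operatorname{diam}(\mathcal U)\le\epsilon$ and $\operatorname{Leb}(\mathcal U)\ge\epsilon/4$ (Lemma \ref{lem4}), the separated-set/cover counting comparison, and the local variational principle for open covers $h_{cov}(\mathcal U)=\sup_{\mu}\inf_{\xi\succ\mathcal U}h_\mu(f,\xi)$ as the single deep input, which you correctly isolate and attribute (Romagnoli, Glasner--Weiss, Huang--Ye); the bookkeeping with the constants $4$, $8$, $64$ and the passage $|\log(\lambda\epsilon)|/|\log\epsilon|\to1$ all check out.
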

	Recently, Shi \cite{S} obtained the following variational principle, which can be seen as the Katok entropy formula for the metric mean dimension.
	\begin{lemma}\cite[Theorem 9]{S}\label{lem9}
		Let $(X,d,f)$ be a TDS. For every $\delta \in(0,1)$, one has
		\begin{equation}\label{eq lemma 2.8}
			\overline{\operatorname{mdim}}_\mathrm{M}(X, d, f)=\limsup _{\epsilon \rightarrow 0} \frac{1}{|\log {\epsilon}|} \sup _{\mu \in M^e(X,f)} \overline{h}_\mu(f,\epsilon,\delta),
		\end{equation}
		and
		\begin{equation}\label{eq 2.8}
			\underline{\operatorname{mdim}}_\mathrm{M}(X, d, f)=\liminf _{\epsilon \rightarrow 0} \frac{1}{|\log {\epsilon}|} \sup _{\mu \in M^e(X,f)} \overline{h}_\mu(f,\epsilon,\delta).
		\end{equation}
	\end{lemma}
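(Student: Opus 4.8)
The plan is to prove both identities by trapping $\sup_{\mu\in M^{e}(X,f)}\overline{h}_{\mu}(f,\epsilon,\delta)$ between $h_{top}(f,X,\epsilon)$ from above and, from below, a rescaled copy of the quantity $\sup_{\mu\in M(X,f)}\inf_{|\xi|<\epsilon}h_{\mu}(f,\xi)$ that appears in Lemma \ref{lemma 2.7}; after dividing by $|\log\epsilon|$ and sending $\epsilon\to0$, both bounds collapse onto the metric mean dimension. The inequality ``$\le$'' is the easy one: a maximal $(n,\epsilon)$-separated subset $E$ of $X$ is $(n,\epsilon)$-spanning, so the closed $d_{n}$-balls of radius $\epsilon$ about the points of $E$ cover $X$, whence $N_{\mu}^{\delta}(n,2\epsilon)\le|E|=s(f,X,n,\epsilon)$ for every $\mu\in M(X,f)$ and every $\delta\in(0,1)$. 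Taking $\limsup_{n\to\infty}\frac1n\log(\cdot)$, then the supremum over $\mu\in M^{e}(X,f)$, then dividing by $|\log\epsilon|$ and passing to $\limsup_{\epsilon\to0}$ (resp.\ $\liminf_{\epsilon\to0}$) yields ``$\le$'' in \eqref{eq lemma 2.8} (resp.\ \eqref{eq 2.8}).

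For the reverse inequality I would start from Lemma \ref{lemma 2.7}. Given $\epsilon>0$, Lemma \ref{lem4} provides an open cover $\mathcal{V}_{\epsilon}$ with $\operatorname{diam}(\mathcal{V}_{\epsilon})<\epsilon$ and $\operatorname{Leb}(\mathcal{V}_{\epsilon})\ge c\epsilon$ for a fixed constant $c\in(0,1)$. Every partition refining $\mathcal{V}_{\epsilon}$ has diameter below $\epsilon$, so $\inf_{|\xi|<\epsilon}h_{\mu}(f,\xi)\le\inf_{\xi\succ\mathcal{V}_{\epsilon}}h_{\mu}(f,\xi)$ for every $\mu\in M(X,f)$. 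Next I would pass to ergodic measures, using that the measure-theoretic entropy of an open cover, $\mu\mapsto\inf_{\xi\succ\mathcal{V}_{\epsilon}}h_{\mu}(f,\xi)$, is affine with respect to the ergodic decomposition, so that $\sup_{\mu\in M(X,f)}\inf_{\xi\succ\mathcal{V}_{\epsilon}}h_{\mu}(f,\xi)=\sup_{\nu\in M^{e}(X,f)}\inf_{\xi\succ\mathcal{V}_{\epsilon}}h_{\nu}(f,\xi)$. Finally, for ergodic $\nu$, Lemma \ref{lemma leq leq} applied with $\epsilon_{1}=\epsilon$ and $\epsilon_{2}=c\epsilon$ gives $\inf_{\xi\succ\mathcal{V}_{\epsilon}}h_{\nu}(f,\xi)\le\underline{h}_{\nu}(f,c\epsilon,\delta)\le\overline{h}_{\nu}(f,c\epsilon,\delta)$. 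Chaining these inequalities,
\[
\sup_{\mu\in M(X,f)}\inf_{|\xi|<\epsilon}h_{\mu}(f,\xi)\ \le\ \sup_{\nu\in M^{e}(X,f)}\overline{h}_{\nu}(f,c\epsilon,\delta)
\]
for all small $\epsilon>0$. Dividing by $|\log\epsilon|$, using $|\log(c\epsilon)|/|\log\epsilon|\to1$ together with the fact that the already-established ``$\le$'' bounds the ratio $\overline{h}_{\nu}(f,\epsilon,\delta)/|\log\epsilon|$ by $h_{top}(f,X,\epsilon)/|\log\epsilon|$, and then passing to $\limsup_{\epsilon\to0}$ (resp.\ $\liminf_{\epsilon\to0}$), Lemma \ref{lemma 2.7} turns the left-hand side into $\overline{\operatorname{mdim}}_{\mathrm{M}}(X,d,f)$ (resp.\ $\underline{\operatorname{mdim}}_{\mathrm{M}}(X,d,f)$), completing the argument.

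I expect the main obstacle to be the reduction of the supremum from $M(X,f)$ to $M^{e}(X,f)$: the Katok-type sandwich in Lemma \ref{lemma leq leq} is available only for ergodic measures, whereas Lemma \ref{lemma 2.7} a priori ranges over the whole simplex, and the infimum over partitions does not commute naively with the ergodic-decomposition integral (different partitions may be ``bad'' for different ergodic components). The device that resolves this is precisely to first trade $\inf_{|\xi|<\epsilon}h_{\mu}(f,\xi)$ for the open-cover entropy $\inf_{\xi\succ\mathcal{V}_{\epsilon}}h_{\mu}(f,\xi)$ (legitimate by Lemma \ref{lem4}) and then invoke affinity of this functional in $\mu$; alternatively, if an ergodic version of the variational principle of Lemma \ref{lemma 2.7} is available, this step can be bypassed by a direct citation. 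The remaining points --- the constant $c$ and the additive constants in $|\log(c\epsilon)|$ --- are routine, since $|\log\epsilon|\to\infty$ and all the quantities involved are nonnegative and of order $O(|\log\epsilon|)$.
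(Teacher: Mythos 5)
The paper itself does not prove this lemma --- it is cited verbatim from \cite[Theorem 9]{S} --- so there is no in-paper proof to compare against; I assess your reconstruction on its own merits, and it is correct. The ``$\le$'' direction is standard: a maximal $(n,\epsilon)$-separated set is $(n,\epsilon)$-spanning, so $N_\mu^\delta(n,2\epsilon)\le s(f,X,n,\epsilon)$ for every $\mu$ and $\delta$, hence $\overline{h}_\mu(f,2\epsilon,\delta)\le h_{\operatorname{top}}(f,X,\epsilon)$, and the factor $|\log 2\epsilon|/|\log\epsilon|\to 1$ makes the re-parametrization harmless after dividing by $|\log\epsilon|$. For ``$\ge$'', you correctly identify the one non-routine point --- reducing the supremum in Lemma~\ref{lemma 2.7} from $M(X,f)$ to $M^e(X,f)$ --- and you resolve it in exactly the spirit of the paper's own appendix proof of Lemma~\ref{lem5}: first replace $\inf_{|\xi|<\epsilon}h_\mu(f,\xi)$ by the open-cover infimum $\inf_{\xi\succ\mathcal{V}_\epsilon}h_\mu(f,\xi)$ via Lemma~\ref{lem4} (apply it to a slightly shrunk radius if you want the diameter strictly below $\epsilon$), then pass through the ergodic decomposition using the identity $\inf_{\xi\succ\mathcal{U}}h_\mu(f,\xi)=\int\inf_{\xi\succ\mathcal{U}}h_\tau(f,\xi)\,d\hat\mu(\tau)$ from \cite[Proposition 5]{HMRY}, and finally apply the sandwich of Lemma~\ref{lemma leq leq} to each ergodic component. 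The cushioning argument with $|\log(c\epsilon)|/|\log\epsilon|\to 1$ goes through as you describe, and it does so even when the metric mean dimension is infinite, so the extra appeal to the already-proved ``$\le$'' bound as a boundedness input is not actually needed at that stage.
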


	\section{Proof of Theorem \ref{main}}\label{section proof of Theorem}
	In this section, we prove Theorem \ref{main}. We assume $I_\varphi \not=\emptyset$ and show \eqref{eq Bowen=mean}. We first consider the case of the upper metric mean dimension. Note that $I_\varphi\subset X$, and therefore
	\begin{equation*}
		\overline{\operatorname{mdim}}_{\mathrm{M}}^B(I_\varphi, f, d)\overset{\eqref{eq subset leq}}\leq \overline{\operatorname{mdim}}_{\mathrm{M}}^B(X, f, d).
	\end{equation*}Proposition \ref{propositiob coincides} implies that
	\begin{equation*}
		\overline{\operatorname{mdim}}_{\mathrm{M}}^B(I_\varphi, f, d)\leq \overline{\operatorname{mdim}}_{\mathrm{M}}(X, f, d).
	\end{equation*} In the following, we need to show \begin{equation*}
		\overline{\operatorname{mdim}}_{\mathrm{M}}^B(I_\varphi, f, d)\geq \overline{\operatorname{mdim}}_{\mathrm{M}}(X, f, d).
	\end{equation*}Denote $S=\overline{\operatorname{mdim}}_{\mathrm{M}}(X, f, d)<\infty$. We only need to consider the case that $S>0$. In the following, we will show that for any $\gamma\in \left(0,\min\left\{S/{7},1\right\}\right)$ small enough,
	$
	\overline{\operatorname{mdim}}_{\mathrm{M}}^B(I_\varphi, f, d)\geq S-6\gamma.
	$ Let us begin with the following lemma.
	\begin{lemma}\label{lem2}
		For any sufficiently small $\gamma\in \left(0,\min\left\{S/{7},1\right\}\right)$, there exists $\epsilon_0=\epsilon_0(\gamma)>0$  such that
		\begin{align}
			&|\log 5\epsilon_0|>1,\label{eq 3.15}\\
			& S-\gamma/2\le\frac{1}{|\log 5\epsilon_0|} \sup _{\mu \in M(X,f)} \inf _{|\xi|<5\epsilon_0} h_\mu(f, \xi),\label{eq 3.11}\\
			&\frac{h_{\operatorname{top}}^B(I_{\varphi},f,\epsilon_0/4)}{|\log{\epsilon_0/4}|}\le \sup_{\epsilon\in(0,5\epsilon_0)}\frac{h_{\operatorname{top}}^B(I_{\varphi},f,\epsilon)}{|\log{\epsilon}|}\le\overline{\operatorname{mdim}}_\mathrm{M}^B(I_{\varphi}, f, d)+\gamma,\label{eq 3.12}\\
			&(\overline{\operatorname{mdim}}_\mathrm{M}^B(I_{\varphi}, f, d)+\gamma )\cdot\frac{|\log\epsilon_0/4|}{|\log5\epsilon_0|}
			\le\overline{\operatorname{mdim}}_\mathrm{M}^B(I_{\varphi}, f, d)+2\gamma.\label{eq 3.13}
		\end{align}
		Moreover, there exist $\mu_1, \mu_2 \in M\left(X,f\right)$ such that
		\begin{equation}\label{eq 3.14}
			\int \varphi d \mu_1 \neq \int \varphi d \mu_2\mbox{ and }\frac{1}{|\log 5\epsilon_0|} \inf _{|\xi|<5\epsilon_0} h_{\mu_i}(f, \xi)>S-\gamma\mbox{ for $i=1,2$.}
		\end{equation}
	\end{lemma}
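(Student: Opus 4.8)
The plan is to extract $\epsilon_0$ from the variational principle for the metric mean dimension (Lemma~\ref{lemma 2.7}) by passing to a suitable subsequence of scales, and then to construct $\mu_1,\mu_2$ by perturbing a measure that is nearly optimal for that principle in the directions of two invariant measures with distinct $\varphi$-averages, whose existence is forced by the hypothesis $I_\varphi\neq\emptyset$.

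\emph{Choice of $\epsilon_0$.} By Lemma~\ref{lemma 2.7} we have $S=\limsup_{\epsilon\to0}\frac1{|\log\epsilon|}\sup_{\mu\in M(X,f)}\inf_{|\xi|<\epsilon}h_\mu(f,\xi)$, so there is a sequence $\epsilon_k\downarrow0$ along which this ratio converges to $S$. I would set $5\epsilon_0:=\epsilon_k$ for $k$ large and check the four displayed conditions. Choosing $k$ large makes $|\log 5\epsilon_0|>1$ (which is \eqref{eq 3.15}) and makes the ratio at $\epsilon_k$ at least $S-\gamma/2$ (which is \eqref{eq 3.11}). Since $\overline{\operatorname{mdim}}_{\mathrm{M}}^B(I_\varphi,f,d)=\limsup_{\epsilon\to0}\frac{h_{\operatorname{top}}^B(I_\varphi,f,\epsilon)}{|\log\epsilon|}$, there is $\delta_0>0$ with $\frac{h_{\operatorname{top}}^B(I_\varphi,f,\epsilon)}{|\log\epsilon|}\le\overline{\operatorname{mdim}}_{\mathrm{M}}^B(I_\varphi,f,d)+\gamma$ for all $\epsilon\in(0,\delta_0)$; enlarging $k$ so that $5\epsilon_0<\delta_0$ yields the right-hand inequality of \eqref{eq 3.12}, while its left-hand inequality is immediate because $\epsilon_0/4\in(0,5\epsilon_0)$. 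Finally $\frac{|\log(\epsilon_0/4)|}{|\log 5\epsilon_0|}\to1$ as $\epsilon_0\to0$ while $\overline{\operatorname{mdim}}_{\mathrm{M}}^B(I_\varphi,f,d)\le S<\infty$, so enlarging $k$ once more so that $\frac{|\log(\epsilon_0/4)|}{|\log 5\epsilon_0|}\le1+\frac{\gamma}{S+\gamma}$ gives \eqref{eq 3.13}; all four requirements then hold for every sufficiently large $k$.

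\emph{Choice of $\mu_1,\mu_2$.} From \eqref{eq 3.11} I would pick $\mu^*\in M(X,f)$ with $\inf_{|\xi|<5\epsilon_0}h_{\mu^*}(f,\xi)>\bigl(S-\tfrac{3}{4}\gamma\bigr)|\log 5\epsilon_0|$. Since $I_\varphi\neq\emptyset$, fixing $x\in I_\varphi$ gives $a:=\liminf_n\frac1n\sum_{i=0}^{n-1}\varphi(f^ix)<\limsup_n\frac1n\sum_{i=0}^{n-1}\varphi(f^ix)=:b$, and taking weak-$*$ limit points of the empirical measures $\frac1n\sum_{i=0}^{n-1}\delta_{f^ix}$ along subsequences realizing $a$ and $b$ produces $\eta_1,\eta_2\in M(X,f)$ with $\int\varphi\,d\eta_1=a\neq b=\int\varphi\,d\eta_2$. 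For $t\in(0,1)$ set $\mu_i=(1-t)\mu^*+t\eta_i$; then $\int\varphi\,d\mu_1-\int\varphi\,d\mu_2=t(a-b)\neq0$. For each fixed $\xi$ the map $\mu\mapsto h_\mu(f,\xi)=\inf_n\frac1nH_\mu\bigl(\bigvee_{j=0}^{n-1}f^{-j}\xi\bigr)$ is an infimum of functions concave in $\mu$, hence concave, so $h_{\mu_i}(f,\xi)\ge(1-t)h_{\mu^*}(f,\xi)+t\,h_{\eta_i}(f,\xi)\ge(1-t)h_{\mu^*}(f,\xi)$; taking the infimum over $\xi$ with $|\xi|<5\epsilon_0$ gives $\inf_{|\xi|<5\epsilon_0}h_{\mu_i}(f,\xi)\ge(1-t)\bigl(S-\tfrac{3}{4}\gamma\bigr)|\log 5\epsilon_0|$. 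Since $\gamma<S/7$, the number $\frac{\gamma}{4S-3\gamma}$ is positive, and choosing $t\in\bigl(0,\frac{\gamma}{4S-3\gamma}\bigr)$ makes $(1-t)\bigl(S-\tfrac{3}{4}\gamma\bigr)>S-\gamma$, which is exactly \eqref{eq 3.14}.

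The only delicate point, and hence the main obstacle, is keeping the infimum $\inf_{|\xi|<5\epsilon_0}h_{\mu_i}(f,\xi)$ appearing in \eqref{eq 3.14} above the threshold $(S-\gamma)|\log 5\epsilon_0|$ after the convex perturbation. This succeeds because $\mu\mapsto h_\mu(f,\xi)$ is concave, so mixing costs at most a factor $1-t$, and $t$ can be taken arbitrarily small while still enforcing $\int\varphi\,d\mu_1\neq\int\varphi\,d\mu_2$; it is precisely for this reason that $\mu^*$ must be chosen with the larger margin $S-\tfrac{3}{4}\gamma$ rather than $S-\gamma$, in order to absorb that loss. Everything else reduces to unwinding the definitions of the various $\limsup$'s together with Lemma~\ref{lemma 2.7}.
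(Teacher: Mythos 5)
Your proposal is correct and takes essentially the same approach as the paper: choose $\epsilon_0$ via Lemma~\ref{lemma 2.7} and the definition of $\overline{\operatorname{mdim}}_\mathrm{M}^B$, then produce the two measures by mixing a near-optimal invariant measure with weak-$*$ limits of empirical measures of a point in $I_\varphi$, using concavity of $\mu\mapsto h_\mu(f,\xi)$. The only cosmetic difference is that the paper takes $\mu_1$ to be the near-optimal measure itself and perturbs only to obtain $\mu_2$ (with $t$ near $1$), whereas you perturb both from a common $\mu^*$ (with $t$ near $0$) — the two are interchangeable.
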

	\begin{proof}[Proof of Lemma \ref{lem2}]
		We first pick $\epsilon^\prime>0$ sufficiently small such that \eqref{eq 3.15} and \eqref{eq 3.13} can be achieved for any $\epsilon_0\in (0,\epsilon^\prime)$.
		According to  Lemma \ref{lemma 2.7}, \eqref{2} and \eqref{eq lemma 2.8}, we can pick $\epsilon_0=\epsilon_0(\gamma)\in (0,\epsilon^\prime)$ satisfying
		\begin{align*}
			& S-\gamma/2\le\frac{1}{|\log 5\epsilon_0|} \sup _{\mu \in M(X,f)} \inf _{|\xi|<5\epsilon_0} h_\mu(f, \xi)\\
			&\sup_{\epsilon\in(0,5\epsilon_0)}\frac{h_{\operatorname{top}}^B(I_{\varphi},f,\epsilon)}{|\log{\epsilon}|}\le\overline{\operatorname{mdim}}_\mathrm{M}^B(I_{\varphi}, f, d)+\gamma
		\end{align*}and
		\begin{equation*}
			\sup_{\epsilon\in (0,5\epsilon_0)}\frac{1}{|\log\epsilon|}\sup_{\mu\in M^e(X,f)}\bar{h}_\mu(f,\epsilon,\gamma)<S+\gamma,
		\end{equation*}respectively. Then \eqref{eq    3.11} and \eqref{eq 3.12} are direct corollaries of the above three inequalities.
		
		Now we show \eqref{eq 3.14}.	
		By \eqref{eq     3.11}, we can choose $\mu_1\in M(X,f)$ such that $$\frac{1}{|\log 5\epsilon_0|} \inf _{|\xi|<5\epsilon_0} h_{\mu_1}(f, \xi)>S-2\gamma/3.$$ By the assumption that $I_\varphi\not=\emptyset$, we claim that there exists  $\nu \in M(X,f)$ satisfying
		\begin{equation}\label{claim <}
			\int \varphi d \mu_1 \neq \int \varphi d \nu.
		\end{equation}Let $\mu_2=t \mu_1+(1-t) \nu$, where $t \in(0,1)$ is chosen sufficiently close to $1$ so that $$\frac{1}{|\log 5\epsilon_0|} \inf _{|\xi|<5\epsilon_0} h_{\mu_2}(f, \xi)>S-\gamma.$$
		Therefore, the proof of Lemma \ref{lem2} is completed if we prove the claim \eqref{claim <}. In fact, by the definition of $I_\varphi\not=\emptyset$, there exists a point $x\in X$ such that $\lim_{n\to\infty}\frac{1}{n}\sum_{i=0}^{n-1}\varphi (f^i(x))$ has a subsequence converging to a number other than $\int\varphi d\mu_1$, named $\lim_{k\to\infty}\frac{1}{n_k}\sum_{i=0}^{n_k-1}\varphi(f^i(x))=C\not=\int\varphi d\mu_1$. We define $\nu_k:=\frac{1}{n_k}\sum_{i=1}^{n_k-1}\delta_{f^i(x)}\in M(X)$, and let $\nu$ be any weak$^*$ limit measure of $\nu_k$. Then $\nu\in M(X,f)$ is the desired measure.
	\end{proof}
	In the following, we fix $\gamma$, $\epsilon_0>0$ and the measures $\mu_1,\mu_2$ obtained in Lemma \ref{lem2}.
	For $i=1,2 $, we denote
	$\alpha_i=\int \varphi d \mu_i$ . In the next section, we shall prove that the following deviation sets
	\begin{equation*}
		P(\alpha_i,Error,n):\overset{def}{=}\left\{x\in X:\ \left|\frac{1}{n}\sum_{j=0}^{n-1}\varphi(f^j(x))-\alpha_i\right|<Error\right\}\mbox{ for } i=1,2
	\end{equation*}are nonempty sets for sufficiently large $n\in\mathbb{N}$. In the following, Sec. \ref{subsection 3.1}-\ref{subsection 3.3}, we are going to construct a Moran-like fractal contained in $I_\varphi$ by gluing points in the above deviation set alternatively. Moreover, this Moran-like fractal has to satisfy a certain entropy distribution principle.
	
	\begin{remark}
		One of difficulties in the proof is that measures $\mu_1$ and $\mu_2$ chosen in Lemma \ref{lem2} are only $f$-invariant, and they  may not be ergodic. While in the proof for the case that irregular sets carry full topological pressure for systems with specification-like property (see for example \cite{Thompson2012almostspecification}), $\mu_1$ and $\mu_2$ can be chosen to be ergodic by using the fact that ergodic measures are entropy-dense for systems with approximate product property \cite[Theorem 2.1]{PfisterSullivan2005}. %and we note that the approximate product property is weaker than almost specification and orbit gluing property. We also remark that Lima and Varandas's proof did not apply this fact, but this fact can simplify their proof.
		In this paper, we note that $\frac{1}{5\epsilon_0}\inf_{|\xi|<5\epsilon_0}h_\mu(f,\xi)$ for any measure $\mu\in M(X,f)$ may not be approachable by $\{\frac{1}{5\epsilon_0}\inf_{|\xi|<5\epsilon_0}h_\mu(f,\xi):\mu\in M^e(X,f)\}$.
	\end{remark}

	\subsection{Construction of the Moran-like fractal}\label{subsection 3.1}
	The idea of construction was inspired by \cite{Thompson2010irregularspecification} and \cite{Thompson2012almostspecification}. The absence of ergodicity of $\mu_1$ and $\mu_2$ can be bypassed by using Lemma \ref{lem5}. The details are contained in the following technical lemma.
	\begin{lemma}\label{lem3}
		For any $\delta\in (0,\gamma/2)$, $N'\in\mathbb{N}$ and $i=1,2$, there exists an integer $\hat{n}_i:=\hat{n}_i(\delta,N')$ such that $\hat{n}_i>N'$ and
		\begin{itemize}
			\item [(1)]$P(\alpha_i,4\delta,\hat{n}_i):=\{x\in X:|\frac{1}{\hat{n}_i}\sum_{j=0}^{\hat{n}_i-1}\varphi(f^j(x))-\alpha_i|<4\delta\}\neq\emptyset$;
			\item[(2)]$M(\alpha_i,4\delta,\hat{n}_i,\frac{9\epsilon_0}{8})$, the largest cardinality of maximal $(\hat{n}_i,\frac{9\epsilon_0}{8})$-separated set  in $P(\alpha_i,4\delta,\hat{n}_i)$, satisfies
			\[M(\alpha_i,4\delta,\hat{n}_i,9\epsilon_0/8)\ge \exp\left\{\hat{n}_i\left(\inf _{|\xi|<5\epsilon_0} h_{\mu_i}(f, \xi)-2\gamma\right)\right\}.\]
		\end{itemize}
	\end{lemma}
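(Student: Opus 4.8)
The plan is to approximate $\mu_i$ by a finite convex combination of ergodic measures via Lemma \ref{lem5}, to realise many $d_m$-separated orbit segments shadowing each ergodic component (using Birkhoff's theorem together with the entropy comparisons of Section \ref{section preliminary}), and then to splice one segment from each component together in the proportions given by the convex combination, using the almost weak specification property; the spliced point then has Birkhoff average of $\varphi$ close to $\int\varphi\,d\mu_i=\alpha_i$, so it lies in the deviation set, and the splicing is injective enough on the segments to produce the asserted number of separated points. For the set-up I would first invoke Lemma \ref{lem4} with some $\epsilon'$ satisfying $\tfrac{9\epsilon_0}{2}<\epsilon'<5\epsilon_0$ to fix a finite open cover $\mathcal U$ with $\operatorname{diam}(\mathcal U)\le\epsilon'<5\epsilon_0$ and $\operatorname{Leb}(\mathcal U)\ge\epsilon'/4>\tfrac{9\epsilon_0}{8}$; since every $\xi\succ\mathcal U$ has $|\xi|\le\operatorname{diam}(\mathcal U)<5\epsilon_0$, this gives $\inf_{|\xi|<5\epsilon_0}h_{\mu_i}(f,\xi)\le\inf_{\xi\succ\mathcal U}h_{\mu_i}(f,\xi)$, a quantity which is finite (it is at most $\log\#\xi_0$ for any finite partition $\xi_0\succ\mathcal U$). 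I would then fix a specification scale $\epsilon>0$ and a constant $\tau>0$ so small that $\operatorname{Leb}(\mathcal U)-\tau-2\epsilon>\tfrac{9\epsilon_0}{8}$ and $\varphi$ oscillates by less than $\delta$ on any $\epsilon$-ball, let $L_\epsilon$ be the tempered gap function of Definition \ref{def almostweakspecificationproperty}, and fix auxiliary constants $\delta'\in(0,\delta)$, $\eta\in(0,\gamma/2)$, $\delta''\in(0,1)$ whose precise smallness is pinned down at the end.

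Next I would apply Lemma \ref{lem5} to $\mathcal U$, $\varphi$, $\mu_i$ with tolerance $\delta'$, obtaining $\nu=\sum_{j=1}^J\lambda_j\nu_j$ with $\nu_j\in M^e(X,f)$, $\lambda_j>0$, $\sum_j\lambda_j=1$, such that
$$\inf_{\xi\succ\mathcal U}h_{\mu_i}(f,\xi)\le\sum_{j=1}^J\lambda_j\inf_{\xi\succ\mathcal U}h_{\nu_j}(f,\xi)+\delta',\qquad\Bigl|\int\varphi\,d\nu-\alpha_i\Bigr|<\delta'.$$
For each $j$, Birkhoff's ergodic theorem together with Egorov's theorem furnishes a set $A_j$ with $\nu_j(A_j)>1-\delta''$ and an $m_0\in\mathbb N$ such that $\bigl|\tfrac1m\sum_{l=0}^{m-1}\varphi(f^lx)-\int\varphi\,d\nu_j\bigr|<\delta'$ for all $x\in A_j$ and $m\ge m_0$. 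A maximal $(m,\operatorname{Leb}(\mathcal U)-\tau)$-separated subset $E_j^{(m)}$ of $A_j$ covers $A_j$ by closed $d_m$-balls of radius $\operatorname{Leb}(\mathcal U)-\tau$, hence $\#E_j^{(m)}\ge N^{\delta''}_{\nu_j}(m,\operatorname{Leb}(\mathcal U))$; combining Lemma \ref{lemma leq leq} for the cover $\mathcal U$ with the definition of $\underline h_{\nu_j}$ gives $\liminf_m\tfrac1m\log N^{\delta''}_{\nu_j}(m,\operatorname{Leb}(\mathcal U))\ge\inf_{\xi\succ\mathcal U}h_{\nu_j}(f,\xi)$, so that $\#E_j^{(m)}\ge\exp\{m(\inf_{\xi\succ\mathcal U}h_{\nu_j}(f,\xi)-\eta)\}$ for all large $m$.

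Then comes the splicing. I would fix a large integer $T$, put $m_j=\lfloor\lambda_jT\rfloor$ (so $m_j/T\to\lambda_j$ and each $m_j\ge m_0$ once $T$ is large), lay out consecutive blocks $[a_j,b_j]$ with $b_j-a_j+1=m_j$ and gaps saturating the constraint $a_{j+1}-b_j\ge L_\epsilon(b_{j+1}-a_{j+1})$, and set $\hat n_i=b_J+1$; since $L_\epsilon$ is tempered and $J$ is fixed, $\hat n_i=T(1+o(1))$, so $\hat n_i>N'$ for $T$ large. For each tuple $(x_1,\dots,x_J)\in\prod_jE_j^{(m_j)}$ the almost weak specification property yields $y\in X$ with $d(f^iy,f^{i-a_j}x_j)<\epsilon$ for $i\in[a_j,b_j]$, $1\le j\le J$. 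If two tuples differ in their $j$-th coordinate, picking $l<m_j$ with $d(f^lx_j,f^lx_j')>\operatorname{Leb}(\mathcal U)-\tau$ gives $d(f^{a_j+l}y,f^{a_j+l}y')>\operatorname{Leb}(\mathcal U)-\tau-2\epsilon>\tfrac{9\epsilon_0}{8}$, so distinct tuples produce $(\hat n_i,\tfrac{9\epsilon_0}{8})$-separated points $y$; in particular the splicing map is injective, with image of cardinality $\prod_j\#E_j^{(m_j)}$. Splitting the Birkhoff sum of $\varphi$ along the orbit of $y$ on $[0,\hat n_i)$ into block contributions and the $o(\hat n_i)$ indices lying in gaps, the block contributions equal $\sum_jm_j(\int\varphi\,d\nu_j+O(\delta')+O(\operatorname{osc}(\varphi,\epsilon)))$ and the remaining indices contribute $O(\|\varphi\|_\infty)\cdot o(\hat n_i)$; dividing by $\hat n_i$ and using $m_j/\hat n_i\to\lambda_j$ together with $|\sum_j\lambda_j\int\varphi\,d\nu_j-\alpha_i|<\delta'$ yields $|\tfrac1{\hat n_i}\sum_{i=0}^{\hat n_i-1}\varphi(f^iy)-\alpha_i|<4\delta$ once $T$ is large and $\epsilon,\delta'$ small. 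Thus $y\in P(\alpha_i,4\delta,\hat n_i)$, which gives (1), and the image of the splicing map is an $(\hat n_i,\tfrac{9\epsilon_0}{8})$-separated subset of $P(\alpha_i,4\delta,\hat n_i)$ of cardinality $\ge\prod_j\#E_j^{(m_j)}\ge\exp\{\sum_jm_j(\inf_{\xi\succ\mathcal U}h_{\nu_j}(f,\xi)-\eta)\}\ge\exp\{\hat n_i(\inf_{|\xi|<5\epsilon_0}h_{\mu_i}(f,\xi)-\delta'-\eta-o(1))\}$, which is $\ge\exp\{\hat n_i(\inf_{|\xi|<5\epsilon_0}h_{\mu_i}(f,\xi)-2\gamma)\}$ for $\delta',\eta$ small and $T$ large; this is (2).

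The hard part is the last error accounting: one must choose $\epsilon,\tau,\delta',\eta,\delta''$ and only then $T$, so that four independent error sources — the $\delta'$ of Lemma \ref{lem5}, the entropy slack $\eta$ from the $\liminf$ in $\underline h_{\nu_j}$, the modulus of continuity of $\varphi$ at scale $\epsilon$, and the $o(\hat n_i)$ produced jointly by the tempered gaps and the rounding $m_j=\lfloor\lambda_jT\rfloor$ — simultaneously keep the Birkhoff average of $\varphi$ along $y$ within $4\delta$ of $\alpha_i$ and keep the exponential growth rate of the spliced separated set above $\inf_{|\xi|<5\epsilon_0}h_{\mu_i}(f,\xi)-2\gamma$. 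The specific constants $\tfrac{9\epsilon_0}{8}$ and $5\epsilon_0$ in the statement, together with the implicit $\operatorname{Leb}(\mathcal U)\approx\tfrac{5\epsilon_0}{4}$, are exactly the margin needed to absorb the $2\epsilon$ loss from shadowing and the passage from the cover $\mathcal U$ to the partition infimum $\inf_{|\xi|<5\epsilon_0}$.
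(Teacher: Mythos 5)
Your proposal follows essentially the same route as the paper's proof: approximate $\mu_i$ by a convex combination of ergodic measures via Lemma \ref{lem5}, extract large separated sets of orbit segments from each ergodic component using the Birkhoff/Egorov argument together with Lemma \ref{lemma leq leq}, and splice one segment per component with the almost weak specification property in proportions $\lambda_j$, tracking both the resulting $(\hat n_i,9\epsilon_0/8)$-separation and the Birkhoff average along the glued orbit. The only differences are cosmetic: you keep the cover parameters $\epsilon',\tau$ and tolerances $\delta',\eta,\delta''$ as free parameters to tune at the end, whereas the paper pins them down explicitly ($\operatorname{diam}(\mathcal U)\le 5\epsilon_0$, $\operatorname{Leb}(\mathcal U)\ge 5\epsilon_0/4$, separation scale $5\epsilon_0/4$, shadowing scale $\epsilon/16$) and writes out the bookkeeping that your ``$o(1)$'' hides through the inequalities \eqref{pick n}--\eqref{pick n 2 Lepsilon}.
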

	\begin{proof}[Proof of Lemma \ref{lem3}]
		We only prove the case for $i=1$, as the other case can be proved similarly. Fix any $\delta\in (0,\gamma/2)$ and $N'\in\mathbb{N}$. By Lemma \ref{lem4}, there exists a finite open cover $\mathcal{U}$ of $X$ such that
		\begin{equation}\label{Pick U}
			\operatorname{diam}(\mathcal{U})\le 5\epsilon_0 \quad\text{ and }\quad \operatorname{Leb}(\mathcal{U})\ge {5\epsilon_0}/{4}.
		\end{equation}
		By Lemma \ref{lem5}, there exists $\nu\in M(X,f)$ satisfying
		\begin{align}
			&\nu=\sum_{i=1}^{k} \lambda_i \nu_i,\text{ where }\lambda_i>0,\text{ }\sum_{i=1}^{k} \lambda_i=1\text{ and }\nu_i \in  M^e(X,f);\label{23}\\
			&\inf _{\xi\succ \mathcal{U}} h_{\mu_1}(f, \xi) \leq\sum_{i=1}^k\lambda_i\inf _{\xi\succ \mathcal{U}} h_{\nu_i}(f, \xi)+\delta;\label{24}\\
			&\left|\int_X \varphi d \nu-\int_X \varphi d \mu_1\right|<\delta.\label{25}
		\end{align}
		By the ergodicity of $\nu_i$, there exists $N_0\in\mathbb{N}$ such that
		\[Y_i(N_0)=\left\{x\in X:\left|\frac{1}{l}\sum_{j=0}^{l-1}\varphi(f^j(x))-\int \varphi d\nu_i\right|<\delta, \text{ for any }l\ge N_0\right\}\]
		has $\nu_i$-measure at least $1-\gamma$ for each $i\in\{1,2,\ldots,k\}$.
		Applying Lemma \ref{lemma leq leq} and \eqref{Pick U} on each $\nu_i\in M^e(X,f)$, we obtain
		\begin{equation}\label{eq leq leq infty}
			\underline{h}_{\nu_i}(f,5\epsilon_0,\gamma)\le \inf_{\xi\succ \mathcal{U}}h_{\nu_i}(f,\xi)\le\underline{h}_{\nu_i}(f,5\epsilon_0/4,\gamma)\leq \bar{h}_{\nu_i}(f,5\epsilon_0/4,\gamma)\leq h_{\mathrm{top}}(f,5\epsilon_0/4)<\infty.
		\end{equation}
		For any $n\in\mathbb{N}$, denote $C_i(n,5\epsilon_0/4)$ to be a maximal $(n,5\epsilon_0/4)$-separated set of $Y_i(N_0)$ with the largest cardinality. Note that $\# C_i(n,5\epsilon_0/4)$ is greater than the minimal number of $(n,5\epsilon_0/4)$-spanning set for $Y_i(N_0)$.  By the definition of $\underline{h}_{\nu_i}(f,5\epsilon_0/4,\gamma)$ in \eqref{def underbar h}, there exists $N_i\in\mathbb{N}$ such that for any $n>N_i$, we have
		\begin{equation}\label{4}
			\begin{split}
				\#C_i(n,5\epsilon_0/4)\ge & N_{\nu_i}^\gamma(n,5\epsilon_0/4)\ge \exp\left\{n\left(\underline{h}_{\nu_i}(f,5\epsilon_0/4,\gamma)-{\gamma}/{2}\right)\right\}\\
				\overset{\eqref{eq leq leq infty}}\ge&\exp\left\{n\left(\inf_{\xi\succ \mathcal{U}}h_{\nu_i}(f,\xi)-{\gamma}/{2}\right)\right\}.
			\end{split}
		\end{equation}
		
		We choose $\widetilde{N}>N'$ such that for any $n>\widetilde N$,
		\begin{equation}\label{18}
			n_i:= [\lambda_i n]>\max\{N_i,N_0\},\mbox{ for }i=1,...,k,
		\end{equation}where $[\lambda_in]$ is the largest integer less than or equal to $\lambda_i  n$.
		We choose $\epsilon\in(0,\epsilon_0)$  satisfying
		\begin{equation}\label{pick epsilon}
			|\varphi(x)-\varphi(y)|<\delta,\mbox{ whenever }d(x,y)<\epsilon.
		\end{equation}
		Let $L_{\epsilon/16}:\mathbb{N}\to\mathbb{N}$ be the tempered function in Definition \ref{def almostweakspecificationproperty} corresponding to $\epsilon/16$. We pick $N>\widetilde{N}$ such that for any $n>N$,
		\begin{equation}\label{pick n}
			k\leq \frac{\delta n}{4\|\varphi\|_{C^0}+1}\mbox{ and }\frac{2\|\varphi\|_{C^0}\cdot \sum_{i=2}^{k}L_{\epsilon/16}(n_i)}{n}<\frac{\delta}{2},
		\end{equation} and moreover,
		\begin{equation}\label{pick n 2 Lepsilon}
			\frac{n_i}{n_i+L_{\epsilon/16}(n_{i})}\geq \frac{\inf_{\xi\succ \mathcal{U}}h_{\mu_1}(f,\xi)-2\gamma}{\inf_{\xi\succ \mathcal{U}}h_{\mu_1}(f,\xi)-\frac{3}{2}\gamma}\mbox{ for }i=2,...,k.
		\end{equation}
		We can achieve \eqref{pick n} and \eqref{pick n 2 Lepsilon} since $k$ is fixed and $L_{\epsilon/16}$ is tempered.
		
		Now we pick any $n>N$. By \eqref{4} and \eqref{18}, one has, for each $i=1,2,\ldots,k$,
		\begin{equation}\label{5}
			\#C_i(n_i,5\epsilon_0/4)\ge\exp\left\{n_i\left(\inf_{\xi\succ \mathcal{U}}h_{\nu_i}(f,\xi)-\gamma/2\right)\right\}.
		\end{equation}
		For any $k$-tuple $(x_1,\ldots,x_k)$ with $x_i\in C_i(n_i,5\epsilon_0/4)$ for $i=1,\ldots,k$, by the almost weak specification property (Definition \ref{def almostweakspecificationproperty}), there exists $y:=y(x_1,...,x_k)\in X$  such that
		\begin{equation}\label{s1}
			d_{n_j}(f^{\sum_{i=0}^{j-1}(n_i+L_{\epsilon/16}(n_{i+1}))}y,x_j)<\epsilon/16 \mbox{ for }j=1,...,k,
		\end{equation}
		where $\{L_{\epsilon/16}(n_{i+1})\}_{i=1}^{k-1}$ are gaps.
		We collect all shadowing points into the set
		\begin{equation*}
			E=\{y(x_1,...,x_k):\ (x_1,...,x_k)\in\prod_{i=1}^{k}C_i(n_i,5\epsilon_0/4)\}.
		\end{equation*}
		Denote $\hat{n}_1=\sum_{i=1}^{k}n_i+\sum_{i=1}^{k-1}L_{\epsilon/16}(n_{i+1}).$ Note that for any two different $k$-tuples $(x_1,...,x_k)$ and $(z_1,...,z_k)$\ with $x_i,z_i\in C_i(n_i,5\epsilon_0/4)$ for $i=1,...,k$, then $x_j\not=z_j$ for some $j\in\{1,...,k\}$. Denote $y_1=y(x_1,...,x_k)$ and $y_2=y(z_1,...,z_k)$, then we have
		\begin{equation}\label{7}
			\begin{split}
				d_{\hat{n}_1}(y_1,y_2)
				\geq& d_{n_j}(x_j,z_j)-d_{n_j}(f^{\sum_{i=0}^{j-1}(n_i+L_{\epsilon/16}(n_{i+1}))}y_1,x_j)-d_{n_j}(f^{\sum_{i=0}^{j-1}(n_i+L_{\epsilon/16}(n_{i+1}))}y_2,z_j)\\
				\overset{\eqref{s1}}\ge& 5\epsilon_0/4-(\epsilon/16)\times 2\ge9\epsilon_0/8.
			\end{split}
		\end{equation}
		Therefore, $E$ is an  $(\hat{n}_1,9\epsilon_0/8)$-separated set, and
		$\#E= \prod_{i=1}^{k}\#C_i(n_i,5\epsilon_0/4).$
		We note that
		\begin{align}
			&\prod_{i=1}^k\#C_i(n_i,5\epsilon_0/4)
			\overset{\eqref{5}}\ge\exp\left\{\sum_{i=1}^kn_i\left(\inf_{\xi\succ \mathcal{U}}h_{\nu_i}(f,\xi)-\gamma/2\right)\right\}\notag\\
			=&\exp\left\{\sum_{i=1}^k\frac{n_i}{\lambda_i}\left(\lambda_i\inf_{\xi\succ \mathcal{U}}h_{\nu_i}(f,\xi)-\frac{\lambda_i\gamma}{2}\right)\right\}
			\ge\exp\left\{\sum_{i=1}^kn\left(\lambda_i\inf_{\xi\succ \mathcal{U}}h_{\nu_i}(f,\xi)-{\lambda_i\gamma}\right)\right\},\label{q1}
		\end{align}provided by $n$ sufficiently large and notice that $\inf_{\xi\succ \mathcal{U}}h_{\nu_i}(f,\xi)<\infty$ for $i=1,...,k$ by \eqref{eq leq leq infty}. Furthermore, by \eqref{24} and $\delta\in(0,\gamma/2)$, we have
		\begin{equation*}
			\begin{split}
				\#E =\prod_{i=1}^k\#C_i(n_i,5\epsilon_0/4)
				\overset{\eqref{24},\eqref{q1}}\ge&\exp\left\{n\left(\inf_{\xi\succ \mathcal{U}}h_{\mu_1}(f,\xi)-\gamma-\delta\right)\right\}\\
				\ge&\exp\left\{n\left(\inf_{\xi\succ \mathcal{U}}h_{\mu_1}(f,\xi)-\frac{3\gamma}{2}\right)\right\}\\
				\overset{\eqref{pick n 2 Lepsilon}}\ge&\exp\left\{\hat n_1\left(\inf_{\xi\succ \mathcal{U}}h_{\mu_1}(f,\xi)-2\gamma\right)\right\}\\
				\ge& \exp\left\{\hat n_1\left(\inf_{|\xi|<5\epsilon_0}h_{\mu_1}(f,\xi)-2\gamma\right)\right\},
			\end{split}
		\end{equation*}where the last inequality is due to the fact that $\xi\succ \mathcal{U}$ implies $|\xi|<5\epsilon_0$.
		
		Now we only need to show that $E\subset P(\alpha_1,4\delta,\hat{n}_1)$. For any $y=y(x_1,...,x_k)\in E$, one has
		\begin{equation*}
			\begin{split}
				&\left|\sum_{j=0}^{\hat{n}_1-1}\varphi(f^j(y))-\hat{n}_1\alpha_1\right|\\
				\leq&\left| \sum_{p=0}^{n_1-1}\varphi(f^p(y))-\sum_{p=0}^{n_1-1}\varphi(f^p(x_1))+\sum_{p=0}^{n_1-1}\varphi(f^p(x_1))-n_1\int\varphi d\nu_1+n_1\int\varphi d\nu_1-n_1\alpha_1\right. \\
				&\quad+\cdots+\sum_{p=\sum_{i=1}^{k-1}(n_i+L_{\epsilon/16}(n_{i+1}))}^{\hat{n}_1-1}\varphi(f^p(y))-\sum_{p=0}^{n_k-1}\varphi(f^p(x_k))+\sum_{p=0}^{n_k-1}\varphi(f^p(x_k))-n_k\int\varphi d\nu_k\\
				&\quad\quad\left.+n_k\int\varphi d\nu_k-n_k\alpha_1\right|+2\sum_{i=1}^{k-1}L_{\epsilon}(n_{i+1})\|\varphi\|_{C^0}\\
				\leq &\left|\sum_{p=0}^{n_1-1}\varphi(f^p(y))-\sum_{p=0}^{n_1-1}\varphi(f^p(x_1))\right|+\left|\sum_{p=0}^{n_1-1}\varphi(f^p(x_1))-n_1\int\varphi d\nu_1\right|\\
				&\quad+\cdots +\left|\sum_{p=\sum_{i=1}^{k-1}(n_i+L_{\epsilon/16}(n_{i+1}))}^{\hat{n}_1-1}\varphi(f^p(y))-\sum_{p=0}^{n_k-1}\varphi(f^p(x_k))\right|+\left|\sum_{p=0}^{n_k-1}\varphi(f^p(x_k))-n_k\int\varphi d\nu_k\right|\\
				&\quad\quad+\left|n_1\int\varphi d\nu_1+\cdots +n_k\int\varphi d\nu_k-(n_1+\cdots +n_k)\alpha_1\right|+2\sum_{i=1}^{k-1}L_{\epsilon}(n_{i+1})\|\varphi\|_{C^0},
			\end{split}
		\end{equation*}where we employ the fact $|\varphi-\alpha_1|\leq 2\|\varphi\|_{C^0}$. By \eqref{s1}, for $ j=1,...,k, $ we have
		\begin{equation*}
			\left|\sum_{p=\sum_{i=1}^{j-1}(n_i+L_{\epsilon/16}(n_{i+1}))}^{\sum_{i=1}^{j-1}(n_i+L_{\epsilon/16}(n_{i+1}))+n_j-1}\varphi(f^p(y))-\sum_{p=0}^{n_j-1}\varphi(f^p(x_j))\right|\leq n_j\cdot var(\varphi,\frac{\epsilon}{16})  \overset{\eqref{pick epsilon}}\leq n_j\delta,
		\end{equation*}
		where $n_0=0$ and $var(\varphi,\frac{\epsilon}{16})=\sup\{|\varphi(w)-\varphi(z)|:\ d(w,z)<\frac{\epsilon}{16}\}$.
		Note that $x_j\in C_j(n_j,5\epsilon_0/4)\subset Y_j(N_0)$ and $n_j>N_0$ by \eqref{18}, then
		\begin{equation*}
			\left| \sum_{p=0}^{n_j-1}\varphi(f^p(x_j))-n_j\int\varphi d\nu_j\right|\leq n_j\delta\mbox{ for }j=1,...,k.
		\end{equation*}Moreover, note that $|\alpha_1|\le \|\varphi\|_{C^0}$, then
		\begin{align*}
			&|n_1\int\varphi d\nu_1+\cdots +n_k\int\varphi d\nu_k-(n_1+\cdots +n_k)\alpha_1|\\
			\leq & |n\int\varphi d\nu-n\alpha_1|+2(n-\sum_{i=1}^{k}n_i)\|\varphi\|_{C^0}\\
			\overset{\eqref{25}}\leq & n\delta+2k\|\varphi\|_{C^0}
			\overset{\eqref{pick n}}\leq \frac{3}{2}n\delta.
		\end{align*}Taking the second inequality in \eqref{pick n} into account, we arrive
		\begin{equation*}
			\frac{1}{\hat{n}_1} \left|\sum_{j=0}^{\hat{n}_1-1}\varphi(f^j(y))-\hat{n}_1\alpha_1\right|< 4\delta.
		\end{equation*}
		Thus, we have $E\subset P(\alpha_1,4\delta,\hat{n}_1)$. The proof of Lemma \ref{lem3} is completed.
	\end{proof}
	With the help of Lemma \ref{lem3}, we begin to construct the Moran-like fractal.
	Let $\rho:\mathbb{N}\to \{1,2\}$ be given by $\rho(k)=(k+1 \pmod2) +1$. Let $\{\delta_k\}_{k=1}^\infty$ be a strictly decreasing sequence such that $\delta_k\to 0$ as $k\to \infty$ and $\delta_1<\gamma$. Let $L_{\epsilon_0/2^{5+k}}:\mathbb{N}\to\mathbb{N}$ be  the tempered function in Definition \ref{def almostweakspecificationproperty} corresponding to $\epsilon_0/2^{5+k}$.  For each $k\in\mathbb{Z}^+$, there exists an constant $V_k\in \mathbb{N}$ such that for any $n\geq V_k$, one has
	\begin{equation}\label{Pick Vk1}
		\frac{\log(L_{\epsilon_0/2^{5+k}}(n))}{n}<\frac{\gamma}{2^{k+1}}.
	\end{equation}
	For each $k\in\mathbb{N}$, by Lemma \ref{lem3}, there exists $\hat{n}_k>V_k$ such that
	\begin{equation*}
		P(\alpha_{\rho(k)},4\delta_k,\hat{n}_k)=\left\{x\in X:\left|\frac{1}{\hat{n}_k}\sum_{j=0}^{\hat{n}_k-1}\varphi(f^j(x))-\alpha_{\rho(k)}\right|<4\delta_k\right\}\neq\emptyset
	\end{equation*}and
	\begin{equation}\label{9}
		\begin{split}
			M_k:=M(\alpha_{\rho(k)},4\delta_k,\hat{n}_k,9\epsilon_0/8)&\ge \exp\left\{\hat{n}_k\left(\inf _{|\xi|<5\epsilon_0} h_{\mu_{\rho(k)}}(f, \xi)-2\gamma\right)\right\}\\
			&\overset{\eqref{eq 3.14}}\ge\exp\left\{\hat{n}_k ((S-\gamma)|\log{5\epsilon_0}|-2\gamma)\right\} \\
			&\overset{\eqref{eq 3.15}}{\ge}\exp\left\{\hat{n}_k (S-3\gamma)|\log{5\epsilon_0}|\right\}.
		\end{split}
	\end{equation}
	Let $S_k$ be a maximal $(\hat{n}_k,9\epsilon_0/8)$-separated set in $P(\alpha_{\rho(k)},4\delta_k,\hat{n}_k)$ with $\#S_k=M_k$. For the notation simplification, for each $k\in\mathbb{N}$, we denote
	\begin{equation*}
		L_k:=L_{\epsilon_0/2^{5+k}}(\hat{n}_k).
	\end{equation*} Choose an integer sequence $\{N_k\}_{k=1}^\infty$ such that
	\begin{equation}\label{8}
		\lim_{k\to\infty}\frac{\hat{n}_{k+1}+L_{k+1}}{N_k}=0
	\end{equation}
	and
	\begin{equation}\label{pick Nk+1}
		\lim_{k\to\infty}\frac{\sum_{i=1}^{k}N_i(\hat n_i+L_i)}{N_{k+1}}=0.
	\end{equation}
	
	Now we construct the Moran-like fractal. The construction is a 3-step process. We describe the main difference between our construction and the construction in \cite{Thompson2010irregularspecification,TV,LF}, which is caused by the tempered gap function. Since the tempered gap function relies on the length of the next orbit segment, we cannot construct centers of $k$-th level set of the Moran-like fractal by gluing centers of $(k-1)$-th level set and $k$-th intermediate set as \cite{Thompson2010irregularspecification}, which makes the gap unexpected large. Therefore, we omit the construction of intermediate sets, and construct centers  of the $k$-th level set directly.
	
	\textbf{Step 1.}
	Construct $\mathcal{H}_1$, the center of the first level set of the Moran-like fractal.
	
	For each $N_1$-tuple  $(x_1^1,\ldots,x_{N_1}^1)\in{S}_1^{N_1}$, by using the almost weak specification property, there exists a point $y:=y(x_1^1,\ldots,x_{N_1}^1)$ such that
	\begin{equation}\label{shadowing y1}
		d_{\hat{n}_1}(f^{(p-1)(\hat{n}_1+L_1)}y, x_p^1)<\epsilon_0/2^{5+1}, \text{ for }p=1,2,\ldots,N_1.
	\end{equation}
	We collect all shadowing points into
	$$
	\mathcal{H}_1:=\left\{y(x_1^1,\ldots,x_{N_1}^1) \in X:(x_1^1,\ldots,x_{N_1}^1)\in {S}_1^{N_1}\right\} .
	$$
	Denote $t_1=N_1 \hat{n}_1+\left(N_1-1\right)L_1$. Then $t_1$ is the amount of time for which the orbit of points in $\mathcal{H}_1$ has been prescribed. The following lemma shows that distinct points $(x_1^1,\ldots,x_{N_1}^1)\in{S}_1^{N_1}$ give rise to distinct points in $\mathcal{H}_1$. Thus
	\begin{equation}\label{number H1}
		\#\mathcal{H}_1=\#{S}_1^{N_1}.
	\end{equation}
	\begin{lemma}\label{lem10}
		For any two different tuples $(x_1^1,\ldots,x_{N_1}^1),(z_1^1,\ldots,z_{N_1}^1)\in{S}_1^{N_1}$,  $y_1:=y(x_1^1,\ldots,x_{N_1}^1)$ and $y_2:=y(z_1^1,\ldots,z_{N_1}^1)$ are $\left(t_1, 17\epsilon_0/16\right)$-separated points, i.e. $d_{t_1}\left(y_1, y_2\right)>17\epsilon_0/16$.
	\end{lemma}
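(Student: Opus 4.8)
The plan is to show that two distinct tuples disagree in some coordinate $p$, and then to use the triangle inequality along the block of time $[(p-1)(\hat n_1+L_1),(p-1)(\hat n_1+L_1)+\hat n_1)$ on which the orbits of $y_1$ and $y_2$ are both prescribed to shadow, respectively, $x_p^1$ and $z_p^1$. First I would fix the two tuples $(x_1^1,\ldots,x_{N_1}^1)$ and $(z_1^1,\ldots,z_{N_1}^1)$ in $S_1^{N_1}$, pick an index $p\in\{1,\ldots,N_1\}$ with $x_p^1\neq z_p^1$, and recall that $S_1$ is an $(\hat n_1,9\epsilon_0/8)$-separated subset of $P(\alpha_{\rho(1)},4\delta_1,\hat n_1)$, so that $d_{\hat n_1}(x_p^1,z_p^1)>9\epsilon_0/8$.

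Next I would invoke the shadowing estimate \eqref{shadowing y1}: writing $s_p=(p-1)(\hat n_1+L_1)$, we have $d_{\hat n_1}(f^{s_p}y_1,x_p^1)<\epsilon_0/2^{6}$ and $d_{\hat n_1}(f^{s_p}y_2,z_p^1)<\epsilon_0/2^{6}$. Since the window $[s_p,s_p+\hat n_1)$ is contained in $[0,t_1)$ (because $s_p+\hat n_1\le (N_1-1)(\hat n_1+L_1)+\hat n_1=t_1$), the quantity $d_{\hat n_1}(f^{s_p}y_1,f^{s_p}y_2)$ is at most $d_{t_1}(y_1,y_2)$. Then the triangle inequality gives
\begin{equation*}
	d_{t_1}(y_1,y_2)\ge d_{\hat n_1}(f^{s_p}y_1,f^{s_p}y_2)\ge d_{\hat n_1}(x_p^1,z_p^1)-d_{\hat n_1}(f^{s_p}y_1,x_p^1)-d_{\hat n_1}(f^{s_p}y_2,z_p^1)>\frac{9\epsilon_0}{8}-\frac{2\epsilon_0}{2^{6}}=\frac{9\epsilon_0}{8}-\frac{\epsilon_0}{32}=\frac{35\epsilon_0}{32}>\frac{17\epsilon_0}{16},
\end{equation*}
which is the desired separation (and in particular $y_1\neq y_2$, establishing \eqref{number H1}).

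This argument is essentially a routine triangle-inequality bookkeeping; the only point requiring care is the indexing, namely verifying that the prescribed block of time for coordinate $p$ in the definition of $\mathcal{H}_1$ indeed sits inside the time interval $[0,t_1)$ over which $d_{t_1}$ is computed, so that one may legitimately pass from $d_{\hat n_1}(f^{s_p}\cdot,f^{s_p}\cdot)$ to a lower bound for $d_{t_1}$. I expect no genuine obstacle here — the only mild subtlety is that the separation constant $9\epsilon_0/8$ of $S_1$ together with the shadowing error $\epsilon_0/2^{6}$ must leave enough room for the target constant $17\epsilon_0/16$, which it does with room to spare.
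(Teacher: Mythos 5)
Your proof is correct and follows essentially the same route as the paper: pick a coordinate $p$ where the tuples disagree, use the $(\hat n_1,9\epsilon_0/8)$-separation of $S_1$ together with the shadowing bound $\epsilon_0/2^{5+1}$ from \eqref{shadowing y1}, and close via the triangle inequality to get $35\epsilon_0/32>17\epsilon_0/16$. The only (minor and welcome) addition is your explicit verification that the shadowed window $[s_p,s_p+\hat n_1)$ lies inside $[0,t_1)$, which the paper leaves implicit.
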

	\begin{proof}[Proof of Lemma \ref{lem10}]
		Denote $a_j=(j-1)(\hat{n}_1+L_{\epsilon_0/2^{5+1}}(\hat{n}_1))$ for each $j\in\{1,2,\ldots,N_1\}$.
		Since $(x_1^1,\ldots,x_{N_1}^1)\neq(z_1^1,\ldots,z_{N_1}^1)$, there exists $j\in\{1,2,\ldots,N_1\}$ such that $x_j^1\neq z_j^1$. Thus,
		\begin{equation*}
			\begin{split}
				d_{t_1}\left(y_1, y_2\right)&\ge d_{\hat{n}_1}(f^{a_j}y_1,f^{a_j}y_2)\\
				&\ge d_{\hat{n}_1}(x^1_j,z^1_j)-d_{\hat{n}_1}(f^{a_j}y_1,x^1_j)-d_{\hat{n}_1}(f^{a_j}y_2,z^1_j)\\
				&> 9\epsilon_0/8-\epsilon_0/2^{5+1}-\epsilon_0/2^{5+1}\\
				&\ge 17\epsilon_0/16,
			\end{split}
		\end{equation*}
		which finishes the proof of Lemma \ref{lem10}.
	\end{proof}

	\textbf{Step 2.}
	Construct $\mathcal{H}_k$, the center of the $k$-th level set of the Moran-like fractal.
	
	We define $\mathcal{H}_k$ inductively. Let $\mathcal{H}_1$ and $t_1$ be constructed as in Step 1. Suppose $\mathcal{H}_{k-1}$ and $t_{k-1}$  for $k\geq 2$ has been constructed, where $t_{k-1}$ can be viewed as the amount of time for which the orbit of points in $\mathcal{H}_{k-1}$ has been prescribed. In the following, we construct $\mathcal{H}_{k}$ and $t_{k}$.
	
	For any $(x,x_1^{k},\ldots,x_{N_{k}}^{k})\in\mathcal{H}_{k-1}\times {S}_k^{N_k}$, by the almost weak specification property, there exists a point $z:=z(x,x_1^{k},\ldots,x_{N_{k}}^{k})\in X$ such that
	\begin{equation}\label{s2}
		d_{t_{k-1}}(z,x)<\epsilon_0/2^{5+k} \text{ and }d_{\hat{n}_k}(f^{t_{k-1}+L_k+(i-1)(\hat{n}_k+L_k)}z,x_i^k)<\epsilon_0/2^{5+k}\mbox{ for }i=1,...,N_k.
	\end{equation}
	In this way, we call that $z=z(x,y)\in\mathcal{H}_{k}$ descends from
	$x\in\mathcal{H}_{k-1}$.
	Define  $$\mathcal{H}_{k}=\left\{z(x,x_1^{k},\ldots,x_{N_{k}}^{k})\in X: \ (x,x_1^{k},\ldots,x_{N_{k}}^{k})\in\mathcal{H}_{k-1}\times {S}_k^{N_k}\right\},$$ and
	\begin{equation}\label{number tk}
		t_{k}=t_{k-1}+N_{k}(\hat{n}_k+L_k).
	\end{equation}  By \eqref{s2}, the following lemma can be proved just like Lemma \ref{lem10}.
	\begin{lemma}\label{lem11}
		For every $x \in \mathcal{H}_{k-1}$ and distinct $N_k$-tuples $(x_1^{k},\ldots,x_{N_{k}}^{k}),(w_1^{k},\ldots,w_{N_{k}}^{k})\in {S}_k^{N_k}$, we denote $z_1=z(x,x_1^{k},\ldots,x_{N_{k}}^{k})$ and $z_2=z(x,w_1^{k},\ldots,w_{N_{k}}^{k})$. Then we have
		\begin{equation*}
			d_{t_{k-1}}\left(z_1, z_2\right)<\frac{\epsilon_0}{2^{5+k-1}} \text { and } d_{t_{k}}\left(z_1, z_2\right)>\frac{9\epsilon_0}{8}-2\times \frac{\epsilon_0}{2^{5+k}}>  \frac{17\epsilon_0}{16}.
		\end{equation*}
	\end{lemma}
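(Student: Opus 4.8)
The plan is to transcribe the proof of Lemma~\ref{lem10} with one added observation: since $z_1$ and $z_2$ now descend from the \emph{same} point $x\in\mathcal{H}_{k-1}$, the defining inequalities \eqref{s2} force them to stay close on the first $t_{k-1}$ coordinates, which yields the first assertion essentially for free.

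First I would establish the first inequality. Both $z_1=z(x,x_1^{k},\dots,x_{N_k}^{k})$ and $z_2=z(x,w_1^{k},\dots,w_{N_k}^{k})$ satisfy $d_{t_{k-1}}(z_i,x)<\epsilon_0/2^{5+k}$ by \eqref{s2}, so the triangle inequality in the metric $d_{t_{k-1}}$ gives $d_{t_{k-1}}(z_1,z_2)<2\cdot\epsilon_0/2^{5+k}=\epsilon_0/2^{5+k-1}$, which is exactly the claimed bound.

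Next I would handle the separation on the first $t_k$ coordinates. Since the two $N_k$-tuples are distinct, fix $j\in\{1,\dots,N_k\}$ with $x_j^{k}\neq w_j^{k}$, and set $b_j=t_{k-1}+L_k+(j-1)(\hat n_k+L_k)$, the instant at which the $j$-th length-$\hat n_k$ segment begins. Using \eqref{number tk} and $j\le N_k$ one checks $b_j+\hat n_k=t_{k-1}+j(\hat n_k+L_k)\le t_k$, so the block $[b_j,b_j+\hat n_k)$ is contained in $[0,t_k)$ and hence $d_{t_k}(z_1,z_2)\ge d_{\hat n_k}(f^{b_j}z_1,f^{b_j}z_2)$. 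By \eqref{s2} again we have $d_{\hat n_k}(f^{b_j}z_1,x_j^{k})<\epsilon_0/2^{5+k}$ and $d_{\hat n_k}(f^{b_j}z_2,w_j^{k})<\epsilon_0/2^{5+k}$, while $x_j^{k},w_j^{k}\in S_k$ (a maximal $(\hat n_k,9\epsilon_0/8)$-separated set) gives $d_{\hat n_k}(x_j^{k},w_j^{k})>9\epsilon_0/8$. The triangle inequality then yields
\[d_{t_k}(z_1,z_2)\ge d_{\hat n_k}(x_j^{k},w_j^{k})-d_{\hat n_k}(f^{b_j}z_1,x_j^{k})-d_{\hat n_k}(f^{b_j}z_2,w_j^{k})>\frac{9\epsilon_0}{8}-2\cdot\frac{\epsilon_0}{2^{5+k}},\]
and since $k\ge2$ forces $2\cdot\epsilon_0/2^{5+k}\le\epsilon_0/64$, the right-hand side exceeds $\tfrac{9\epsilon_0}{8}-\tfrac{\epsilon_0}{64}=\tfrac{71\epsilon_0}{64}>\tfrac{68\epsilon_0}{64}=\tfrac{17\epsilon_0}{16}$.

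I expect no genuine obstacle. The only points needing a moment's care are the index bookkeeping for $b_j$ — verifying that the $j$-th copy of the orbit segment really lies within the first $t_k$ coordinates, which is where \eqref{number tk} and $j\le N_k$ enter — and the elementary numerical estimate $\tfrac{9\epsilon_0}{8}-2\epsilon_0/2^{5+k}>\tfrac{17\epsilon_0}{16}$ valid for $k\ge2$; everything else is a literal copy of the proof of Lemma~\ref{lem10} together with the extra first-coordinate bound read off from \eqref{s2}.
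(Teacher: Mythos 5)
Your proof is correct and follows exactly the route the paper intends: the paper gives no separate argument for this lemma, stating only that it ``can be proved just like Lemma~\ref{lem10}'' using~\eqref{s2}, and your write-up is precisely that transcription, together with the observation (needed here but not in Lemma~\ref{lem10}) that the common ancestor $x$ and the bound $d_{t_{k-1}}(z_i,x)<\epsilon_0/2^{5+k}$ from~\eqref{s2} yield the first inequality by the triangle inequality. The index bookkeeping for $b_j$ and the numerical check $\frac{9\epsilon_0}{8}-2\cdot\frac{\epsilon_0}{2^{5+k}}>\frac{17\epsilon_0}{16}$ are both handled correctly.
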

	As a corollary of Lemma \ref{lem11}, \eqref{number H1} and the induction step, we have
	\begin{equation}\label{22}
		\# \mathcal{H}_k=\# S_1^{N_1} \ldots \# S_k^{N_k}=\prod_{i=1}^{k}M_i^{N_i},
	\end{equation}and any two points in $\mathcal{H}_k$ are $\left(t_k, \frac{17\epsilon_0}{16}\right)$-separated. In particular, if $z, z' \in \mathcal{H}_k$, then
	$$
	\overline{B}_{t_k}\left(z, \frac{\epsilon_0}{2^{5+k}}\right) \cap \overline{B}_{t_k}\left(z', \frac{\epsilon_0}{2^{5+k}}\right)=\emptyset,
	$$where $\overline{B}_{t_k}\left(z, \frac{\epsilon_0}{2^{5+k}}\right)=\{y\in X:\ d_{t_k}(z,y)\leq \frac{\epsilon_0}{2^{5+k}}\}$.
	\begin{lemma}\label{lem12}
		If $z\in \mathcal{H}_{k}$  descends from $x \in \mathcal{H}_{k-1}$, then
		\begin{equation}\label{sub}
			\overline{B}_{t_{k}}\left(z, \frac{\epsilon_0}{2^{5+k}}\right) \subset \overline{B}_{t_{k-1}}\left(x, \frac{\epsilon_0}{2^{5+k-1}}\right).
		\end{equation}
	\end{lemma}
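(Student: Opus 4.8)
The plan is to unwind the definitions recorded in Step 2 and apply the triangle inequality in the Bowen metric $d_{t_k}$, using only the shadowing estimate \eqref{s2} and the monotonicity $d_m \le d_n$ for $m \le n$. Suppose $z = z(x, x_1^k,\ldots,x_{N_k}^k) \in \mathcal{H}_k$ descends from $x \in \mathcal{H}_{k-1}$. Take any $y \in \overline{B}_{t_k}(z, \epsilon_0/2^{5+k})$; we must show $y \in \overline{B}_{t_{k-1}}(x, \epsilon_0/2^{5+k-1})$, i.e. $d_{t_{k-1}}(y,x) \le \epsilon_0/2^{5+k-1}$. Since $t_{k-1} \le t_k$ (recall $t_k = t_{k-1} + N_k(\hat n_k + L_k)$ from \eqref{number tk}), the assumption $d_{t_k}(z,y) \le \epsilon_0/2^{5+k}$ gives in particular $d_{t_{k-1}}(z,y) \le \epsilon_0/2^{5+k}$. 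On the other hand, the first inequality in \eqref{s2} says precisely $d_{t_{k-1}}(z,x) < \epsilon_0/2^{5+k}$.

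First I would combine these two estimates by the triangle inequality for $d_{t_{k-1}}$:
\begin{equation*}
	d_{t_{k-1}}(x,y) \le d_{t_{k-1}}(x,z) + d_{t_{k-1}}(z,y) < \frac{\epsilon_0}{2^{5+k}} + \frac{\epsilon_0}{2^{5+k}} = \frac{\epsilon_0}{2^{5+k-1}}.
\end{equation*}
This is exactly the statement that $y \in \overline{B}_{t_{k-1}}(x, \epsilon_0/2^{5+k-1})$, which establishes the inclusion \eqref{sub}. Since $y$ was an arbitrary point of $\overline{B}_{t_k}(z, \epsilon_0/2^{5+k})$, the proof is complete.

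There is essentially no obstacle here: the lemma is a routine consequence of the defining property \eqref{s2} of the descent relation together with the elementary fact that the Bowen metric $d_n$ is nondecreasing in $n$, so that a ball in $d_{t_k}$ is contained in the corresponding ball in $d_{t_{k-1}}$ of the same radius. The only point to be careful about is bookkeeping with the exponents: the radius doubles from $\epsilon_0/2^{5+k}$ to $\epsilon_0/2^{5+k-1} = 2 \cdot \epsilon_0/2^{5+k}$, which is exactly what the two-term triangle inequality produces. This nesting of the Bowen balls across levels is precisely what will later allow us to take a decreasing intersection and extract a well-defined point of the Moran-like fractal lying in $I_\varphi$.
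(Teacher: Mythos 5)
Your proof is correct and follows essentially the same route as the paper: apply the first shadowing estimate in \eqref{s2} together with the triangle inequality for $d_{t_{k-1}}$, using the monotonicity of the Bowen metric in $n$ to pass from a $d_{t_k}$-bound to a $d_{t_{k-1}}$-bound. The paper's proof is identical in substance, only phrased with $w$ in place of $y$ and with the monotonicity step left implicit.
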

	\begin{proof}[Proof of Lemma \ref{lem12}]
		By \eqref{s2}, $d_{t_{k-1}}(z,x)\leq \epsilon_0/2^{5+k}$. Thus,
		for any $w\in \overline{B}_{t_{k}}\left(z, \frac{\epsilon_0}{2^{5+k}}\right)$, one has
		\begin{equation*}
			\begin{split}
				d_{t_{k-1}}(w,x)\leq d_{t_{k-1}}(w,z)+d_{t_{k-1}}(z,x)\leq \frac{\epsilon_0}{2^{5+k}}+\frac{\epsilon_0}{2^{5+k}}= \frac{\epsilon_0}{2^{5+k-1}},
			\end{split}
		\end{equation*}
		which implies $w\in \overline{B}_{t_{k-1}}\left(x, \frac{\epsilon_0}{2^{5+k-1}}\right)$. Therefore, $
		\overline{B}_{t_{k}}\left(z, \frac{\epsilon_0}{2^{5+k}}\right) \subset \overline{B}_{t_{k-1}}\left(x, \frac{\epsilon_0}{2^{5+k-1}}\right)
		$.
	\end{proof}
	
	\textbf{Step 3.} Finish the construction of the Moran-like fractal and show that it is contained in $I_{\varphi}$.
	
	Let $\mathcal{X}_k:=\cup_{x\in \mathcal{H}_k}\overline{B}_{t_{k}}\left(x, \frac{\epsilon_0}{2^{5+k}}\right)$ for each $k\in\mathbb{N}$ and $\mathcal{X}:=\cap_{k\in\mathbb{N}}\mathcal{X}_k$. By Lemma \ref{lem12}, one has $\mathcal{X}_{k+1}\subset\mathcal{X}_k$ for each $k\in\mathbb{N}$. Then according to the compactness of $X$, $\mathcal{X}$ is a nonempty closed subset of $X$. The following lemma shows that the Moran-like fractal $\mathcal{X}$ is contained in $I_\varphi$.
	\begin{lemma}\label{lem13}
		For any $x\in\mathcal{X}$, the sequence $\{\frac{1}{t_k}\sum_{i=0}^{t_k-1}\phi(f^i(x))\}_{k\in\mathbb{N}}$ diverges. In particular, $\mathcal{X}\subset I_\varphi.$
	\end{lemma}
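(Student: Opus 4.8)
The plan is to show that for every $x\in\mathcal{X}$ the Birkhoff averages $\tfrac1{t_k}\sum_{i=0}^{t_k-1}\varphi(f^i x)$ accumulate on \emph{both} $\alpha_1$ and $\alpha_2$, which — since $\alpha_1\neq\alpha_2$ — forces the sequence in $n$ to diverge. \textbf{Step 1 (reading off the shadowing).} Fix $x\in\mathcal{X}$. Since $x\in\mathcal{X}_k$ for each $k$, by definition of $\mathcal{X}_k$ we may choose $z_k\in\mathcal{H}_k$ with $d_{t_k}(x,z_k)\le\epsilon_0/2^{5+k}$. For $k\ge 2$ the point $z_k$ is, by the inductive construction of $\mathcal{H}_k$ in Step~2, of the form $z(\cdot,x_1^k,\dots,x_{N_k}^k)$ with $(x_1^k,\dots,x_{N_k}^k)\in S_k^{N_k}$, so that \eqref{s2} applies to $z_k$. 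Writing $s_{k,i}=t_{k-1}+L_k+(i-1)(\hat n_k+L_k)$ for the start time of the $i$-th level-$k$ block, the triangle inequality together with $d_{t_k}(x,z_k)\le\epsilon_0/2^{5+k}$ and \eqref{s2} gives $d(f^{s_{k,i}+p}x,\,f^p x_i^k)<\epsilon_0/2^{4+k}$ for all $0\le p<\hat n_k$ and $1\le i\le N_k$. Since each $x_i^k$ lies in $P(\alpha_{\rho(k)},4\delta_k,\hat n_k)$, one has $\bigl|\sum_{p=0}^{\hat n_k-1}\varphi(f^p x_i^k)-\hat n_k\alpha_{\rho(k)}\bigr|<4\delta_k\hat n_k$.

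\textbf{Step 2 (a window estimate).} By uniform continuity of $\varphi$ on the compact space $X$, the quantity $\eta_k:=\sup\{\,|\varphi(u)-\varphi(v)|:d(u,v)<\epsilon_0/2^{4+k}\,\}$ tends to $0$, so for each $i$, $\bigl|\sum_{p=0}^{\hat n_k-1}\varphi(f^{s_{k,i}+p}x)-\hat n_k\alpha_{\rho(k)}\bigr|<(\eta_k+4\delta_k)\hat n_k$. Summing over the $N_k$ blocks of level $k$, estimating the $N_k$ intervening gaps of length $L_k$ each crudely by $\|\varphi\|_{C^0}$, absorbing the difference between $N_k\hat n_k\alpha_{\rho(k)}$ and $(t_k-t_{k-1})\alpha_{\rho(k)}$, and dividing by $t_k-t_{k-1}=N_k(\hat n_k+L_k)$ (cf.\ \eqref{number tk}), I would obtain
\[
\Bigl|\frac{1}{t_k-t_{k-1}}\sum_{j=t_{k-1}}^{t_k-1}\varphi(f^j x)-\alpha_{\rho(k)}\Bigr|\ \le\ \eta_k+4\delta_k+2\|\varphi\|_{C^0}\frac{L_k}{\hat n_k}\ =:\ \beta_k .
\]
Here $\beta_k\to 0$: indeed $\eta_k\to 0$, $\delta_k\downarrow 0$ by the choice of $\{\delta_k\}$, and $L_k/\hat n_k\to 0$ because for each fixed $k$ the gap function $L_{\epsilon_0/2^{5+k}}$ is tempered, so — using \eqref{Pick Vk1} and the freedom to take $\hat n_k$ as large as needed — the gaps are an asymptotically negligible fraction of the level-$k$ window.

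\textbf{Step 3 (globalizing and concluding).} Splitting $\sum_{j=0}^{t_k-1}=\sum_{j=0}^{t_{k-1}-1}+\sum_{j=t_{k-1}}^{t_k-1}$, bounding the first sum by $t_{k-1}\|\varphi\|_{C^0}$, applying Step~2 to the second, and dividing by $t_k$ yields $\bigl|\tfrac1{t_k}\sum_{j=0}^{t_k-1}\varphi(f^j x)-\alpha_{\rho(k)}\bigr|\le 2\tfrac{t_{k-1}}{t_k}\|\varphi\|_{C^0}+\beta_k$. Since $t_{k-1}\le\sum_{i=1}^{k-1}N_i(\hat n_i+L_i)$ while $t_k>N_k$, condition \eqref{pick Nk+1} gives $t_{k-1}/t_k\to 0$, so the right-hand side tends to $0$. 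Because $\rho(2m-1)=1$ and $\rho(2m)=2$, it follows that $\tfrac1{t_{2m-1}}\sum_{j=0}^{t_{2m-1}-1}\varphi(f^j x)\to\alpha_1$ and $\tfrac1{t_{2m}}\sum_{j=0}^{t_{2m}-1}\varphi(f^j x)\to\alpha_2$; since $\alpha_1=\int\varphi\,d\mu_1\neq\int\varphi\,d\mu_2=\alpha_2$ by \eqref{eq 3.14}, the sequence $\bigl\{\tfrac1{t_k}\sum_{j=0}^{t_k-1}\varphi(f^j x)\bigr\}_{k}$ has two distinct limit points and hence diverges. A fortiori $\lim_{n\to\infty}\tfrac1n\sum_{i=0}^{n-1}\varphi(f^i x)$ fails to exist (its existence would force convergence along $n=t_k$), i.e.\ $x\in I_\varphi$; as $x\in\mathcal{X}$ was arbitrary, $\mathcal{X}\subset I_\varphi$.

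The only genuine difficulty I anticipate is the error bookkeeping underlying Steps~2–3: one must confirm that the four error sources — the modulus-of-continuity term $\eta_k$ stemming from the shrinking shadowing radius $\epsilon_0/2^{4+k}$, the $4\delta_k$ coming from the deviation sets $P(\alpha_{\rho(k)},4\delta_k,\hat n_k)$, the total gap length $N_k L_k$, and the prehistory length $t_{k-1}$ — all become negligible relative to the dominating window length $t_k-t_{k-1}=N_k(\hat n_k+L_k)$; this is exactly what the choices \eqref{Pick Vk1}, \eqref{pick Nk+1} and $\delta_k\downarrow 0$ in the construction were set up to guarantee.
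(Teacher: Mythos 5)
Your proof is correct and follows essentially the same route as the paper: both arguments use the shadowing structure of $\mathcal{H}_k$, the deviation sets $P(\alpha_{\rho(k)},4\delta_k,\hat n_k)$, and the smallness of the gaps and the prehistory $t_{k-1}$ relative to $t_k$ to show $\lim_k\bigl|\tfrac{1}{t_k}\sum_{j=0}^{t_k-1}\varphi(f^j x)-\alpha_{\rho(k)}\bigr|=0$, from which divergence follows since $\rho$ alternates and $\alpha_1\neq\alpha_2$. The only cosmetic difference is that the paper first bounds $R_k=\max_{z\in\mathcal{H}_k}\bigl|\sum_{p<t_k}\varphi(f^p z)-t_k\alpha_{\rho(k)}\bigr|$ and then transfers to $x\in\mathcal{X}$ via $d_{t_k}(x,z_k(x))\le\epsilon_0/2^{5+k}$, whereas you fold the two shadowing distances together at the outset and split the Birkhoff sum at time $t_{k-1}$ — the bookkeeping is the same.
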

	\begin{proof}[Proof of Lemma \ref{lem13}]
		For any $x\in\mathcal{X}$, we prove Lemma \ref{lem13} by showing
		\begin{equation}\label{eq lemma 13}
			\lim_{k \rightarrow \infty}\left|\frac{1}{t_k} \sum_{j=0}^{t_k-1} \varphi(f^j(x))-\alpha_{\rho(k)}\right|=0,
		\end{equation} since  $\rho(k)$ is alternating between $1$ and $2$. Thus, we need  to estimate $\left|\sum_{j=0}^{t_k-1} \varphi(f^j(x))-t_k\alpha_{\rho(k)}\right|$, which is a 2-step process.
		
		\textbf{Step 1.}  Estimation on $\mathcal{H}_k$ for $k\geq 2$.
		
		For $k\geq 2$, let us estimate
		\begin{equation*}
			R_k=\max_{z\in \mathcal{H}_k}\left|\sum_{p=0}^{t_k-1}\varphi(f^p(z))-t_k\alpha_{\rho(k)}\right|.
		\end{equation*}For any $z\in \mathcal{H}_{k}$, there exists $(x,x_1^{k},\ldots,x_{N_{k}}^{k})\in\mathcal{H}_{k-1}\times {S}_k^{N_k}$ the satisfying shadowing property \eqref{s2}.
		On intervals of integers $[0,t_{k-1}+L_k-1]$ and $\cup_{i=1}^{N_k-1}[t_{k-1}+i(\hat{n}_k+L_k),t_{k-1}+i(\hat{n}_k+L_k)+L_k-1]$, we use $|\varphi-\alpha_{\rho(k)}|\leq 2\|\varphi\|_{C^0}$, while on intervals of integers $\cup_{i=1}^{N_k}[t_{k-1}+L_k+(i-1)(\hat{n}_k+L_k),t_{k-1}+i(\hat{n}_k+L_k)-1]$, we use the shadowing property in \eqref{s2} and the fact that $\{x_i^k\}_{i=1}^{N_k}\in P(\alpha_{\rho(k)},4\delta_k,\hat{n}_k)$ to obtain
		\begin{equation*}
			\begin{split}
				&\left|\sum_{p=0}^{t_k-1}\varphi(f^p(z))-t_k\alpha_{\rho(k)}\right|\\
				\leq & \sum_{i=1}^{N_k}\left(\left|\sum_{p=t_{k-1}+L_k+(i-1)(\hat{n}_k+L_k)}^{t_{k-1}+i(\hat{n}_k+L_k)-1}\varphi(f^p(z))-\sum_{p=0}^{\hat{n}_k-1}\varphi(f^p(x_i^k))\right|+\left|\sum_{p=0}^{\hat{n}_k-1}\varphi(f^p(x_i^k))-\hat{n}_k\alpha_{\rho(k)}\right|\right)\\
				&\quad+2(t_{k-1}+L_k)\|\varphi\|_{C^0}+2(N_k-1)L_k\|\varphi\|_{C^0}\\
				\leq & N_k\hat{n}_k (var(\varphi,\frac{\epsilon_0}{2^{5+k}})+4\delta_k)+2(t_{k-1}+N_k\cdot L_k)\|\varphi\|_{C^0}\\
				< &t_k (var(\varphi,\frac{\epsilon_0}{2^{5+k}})+4\delta_k)+2(t_{k-1}+N_k\cdot L_k)\|\varphi\|_{C^0}.
			\end{split}
		\end{equation*}We note that
		\begin{align*}
			\frac{t_{k-1}}{t_k}\overset{\eqref{number tk}}\leq \frac{\sum_{i=1}^{k-1}N_i(\hat{n}_i+L_i)}{N_k}\overset{\eqref{pick Nk+1}}\to 0 \text{ and }
			\frac{N_k\cdot L_k}{t_k}\overset{\eqref{number tk}}\leq  \frac{N_k\cdot L_k}{N_k\hat{n}_k}\overset{\eqref{Pick Vk1}}\to 0\mbox{ as }k\to \infty.
		\end{align*}Therefore, we have
		\begin{equation*}
			\frac{R_k}{t_k}\leq var(\varphi,\frac{\epsilon_0}{2^{5+k}})+4\delta_k+\frac{2(t_{k-1}+N_k\cdot L_k)\|\varphi\|_{C^0}}{t_k}\to 0, \text{ as }k\to \infty.
		\end{equation*}
		
		\textbf{Step 2.} Estimation on $\mathcal{X}$.
		
		For any $x\in\mathcal{X}$, $x\in \mathcal{X}_k=\cup_{z\in \mathcal{H}_k}\overline{B}_{t_k}(z,\frac{\epsilon_0}{2^{5+k}})$ for any $k\in\mathbb{N}$. Therefore, for any $k\in\mathbb{N}$ there exists  $z_k(x)\in \mathcal{H}_k$ such that $x\in \overline{B}_{t_k}(z_k(x),\frac{\epsilon_0}{2^{5+k}}) $. Now
		\begin{align*}
			& \lim_{k \rightarrow \infty}\left|\frac{1}{t_k} \sum_{j=0}^{t_k-1} \varphi(f^j(x))-\alpha_{\rho(k)}\right|\\
			\leq  &\lim_{k \rightarrow \infty}\left(\left|\frac{1}{t_k} \sum_{j=0}^{t_k-1} \varphi(f^j(x))-\frac{1}{t_k} \sum_{j=0}^{t_k-1} \varphi(f^j(z_k(x)))\right|+\left|\frac{1}{t_k} \sum_{j=0}^{t_k-1} \varphi(f^j(z_k(x)))-\alpha_{\rho(k)}\right|\right)\\
			\leq &\lim_{k \rightarrow \infty}\left(var\left(\varphi,\frac{\epsilon_0}{2^{5+k}}\right)+\frac{R_k}{t_k}\right)\\
			=&0.
		\end{align*}
		The proof of Lemma \ref{lem13} is completed.
	\end{proof}
	
	\subsection{Construct a suitable measure on this fractal}\label{subsection 3.2}
	In this subsection, we construct a suitable measure on the fractal $\mathcal{X}$, and then apply entropy distribution principle type argument.

	For each $k\in\mathbb{N}$, define the probability measure
	\[\nu_k:=\frac{1}{\#\mathcal{H}_k}\sum_{x\in\mathcal{H}_k}\delta_x.\]
	By the compactness of $M(X)$, there exists a subsequence of $\{\nu_k\}_{k=1}^\infty$ converges to a measure in $M(X)$ with respect to the weak*-topology, named $\nu_{k_l}\to\nu$ as $l\to \infty$.

	By the construction of $\nu_k$, we have $\nu_k(\mathcal{X}_k)=1$ for each $k\in\mathbb{N}$. Since $\mathcal{X}_{k_l}\subset\mathcal{X}_{k}$ if $k_l\geq k$, it follows that
	\[\nu_{k_l}(\mathcal{X}_k)=1,\text{ for any } k_l\geq k.\]
	As $\mathcal{X}_k$ is closed, one has
	$\nu(\mathcal{X}_k)\ge\limsup_{l\to\infty}\nu_{k_l}(\mathcal{X}_k)=1,$
	which implies that \begin{equation}\label{meausre}
		\nu(\mathcal{X})=\nu(\cap_{k=1}^\infty\mathcal{X}_k)=1.
	\end{equation} We need the following lemma to apply the entropy distribution principle type argument.
	\begin{lemma}\label{lem17}
		There exists $N\in\mathbb{N}$ such that for any $n\ge N$, if $B_n(x,\epsilon_0/4)\cap \mathcal{X}\neq\emptyset$, then
		\[\nu(B_n(x,\epsilon_0/4))\le \exp(-n(S-4\gamma)|\log5\epsilon_0|).\]
	\end{lemma}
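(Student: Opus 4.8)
The plan is to count how many pieces $\overline{B}_{t_k}(z,\epsilon_0/2^{5+k})$ with $z\in\mathcal{H}_k$ can intersect a single Bowen ball $B_n(x,\epsilon_0/4)$, and then use the explicit formula $\nu(\overline{B}_{t_k}(z,\epsilon_0/2^{5+k}))=\#\mathcal{H}_k^{-1}\cdot(\text{number of }z'\in\mathcal{H}_k\text{ below it})$ together with \eqref{22} and \eqref{9}. First I would fix $n$ large and locate the unique index $k$ with $t_{k-1}\le n<t_k$; by \eqref{pick Nk+1} and \eqref{8} we may assume $n\ge N$ forces $k\ge 2$ and that the ratios $t_{k-1}/t_k$, $N_kL_k/(N_k\hat n_k)$ are as small as we like. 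The measure $\nu$ is supported on $\mathcal{X}\subset\mathcal{X}_k$, so $\nu(B_n(x,\epsilon_0/4))=\nu(B_n(x,\epsilon_0/4)\cap\mathcal{X}_k)$, and it suffices to bound $\nu$ of the union of those level-$k$ balls that meet $B_n(x,\epsilon_0/4)$.

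The key separation fact is that distinct points of $\mathcal{H}_k$ are $(t_k,17\epsilon_0/16)$-separated, while $B_n(x,\epsilon_0/4)$ has $d_n$-diameter $<\epsilon_0/2$; since $n\le t_k$, two level-$k$ centers whose balls both meet $B_n(x,\epsilon_0/4)$ must agree on the first $n$ coordinates, i.e. they can only differ in the "tail" after time $n$. Writing $t_{k-1}\le n<t_k$, the tail after time $n$ inside the $k$-th block constrains at most the last $N_k-\lfloor (n-t_{k-1}-L_k)/(\hat n_k+L_k)\rfloor$ of the $N_k$ segments coming from $S_k$ — but for the clean bound I would just use the crude count: the number of $z'\in\mathcal{H}_k$ whose ball meets $B_n(x,\epsilon_0/4)$ is at most $M_k^{N_k'}$ where $N_k'$ is the number of $S_k$-segments that start at or after time $n$, and $N_k'\le (t_k-n)/\hat n_k+1\le (t_k-t_{k-1})/\hat n_k+1=N_k(\hat n_k+L_k)/\hat n_k+1$. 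Since $\nu$ assigns each level-$k$ ball exactly $\#\mathcal{H}_k^{-1}=\prod_{i=1}^kM_i^{-N_i}$, we get
\[
\nu(B_n(x,\epsilon_0/4))\le M_k^{N_k'}\cdot\prod_{i=1}^kM_i^{-N_i}=\prod_{i=1}^{k-1}M_i^{-N_i}\cdot M_k^{N_k'-N_k}.
\]
Now I would take logarithms, use $M_i\ge\exp(\hat n_i(S-3\gamma)|\log 5\epsilon_0|)$ from \eqref{9}, and compare $\sum_{i=1}^{k-1}N_i\hat n_i$ and $(N_k-N_k')\hat n_k$ against $n$. The point is that $\sum_{i=1}^{k}N_i\hat n_i$ is within a factor $1+o(1)$ of $t_k$ (the gaps $L_i$ are negligible by \eqref{Pick Vk1}), and $n$ lies between $t_{k-1}$ and $t_k$; combining $t_{k-1}=o(t_k)$ with the estimate on $N_k'$ shows $\sum_{i=1}^{k-1}N_i\hat n_i+(N_k-N_k')\hat n_k\ge n(1-o(1))$, so the exponent is at most $-n(S-3\gamma)|\log 5\epsilon_0|(1-o(1))\le -n(S-4\gamma)|\log5\epsilon_0|$ once $k$ (hence $n$) is large, using $|\log 5\epsilon_0|>1$ from \eqref{eq 3.15} to absorb the slack into the extra $\gamma$.

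The main obstacle is the bookkeeping in the "tail" argument: one has to handle a Bowen ball $B_n(x,\epsilon_0/4)$ whose time $n$ falls in the \emph{middle} of the $k$-th construction block, so the constraint it imposes on $\mathcal{H}_k$ is that the prefix up to time $n$ is determined, and only the $S_k$-segments lying entirely after time $n$ are free — and one must check that the gaps $L_k$ inserted between segments, which depend on $\hat n_k$ via the tempered function, do not spoil the ratio estimates. This is exactly where \eqref{Pick Vk1} (controlling $\log L_k/\hat n_k$), \eqref{8}, and \eqref{pick Nk+1} are used; all other steps are routine. The lower bound $|\log 5\epsilon_0|>1$ is what lets us trade the $o(1)$ errors and the difference between $3\gamma$ and $4\gamma$ cleanly.
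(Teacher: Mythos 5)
Your overall plan is exactly the one the paper follows: locate the Bowen time $n$ inside the $k$-th construction block, count how many level-$k$ centers can lie $\epsilon_0/2$-close to $x$ in $d_n$ using the separation built into $\mathcal{H}_k$, feed the lower bound on $M_i$ from \eqref{9} into $\#\mathcal{H}_k^{-1}$, and control the loss from the gaps $L_i$ and the partial block via \eqref{Pick Vk1}, \eqref{8}, \eqref{pick Nk+1} and the slack $|\log 5\epsilon_0|>1$. The paper's indexing by a pair $(k^*,j)$ with $t_{k^*}+j(\hat n_{k^*+1}+L_{k^*+1})\le n<t_{k^*}+(j+1)(\hat n_{k^*+1}+L_{k^*+1})$ matches your $k=k^*+1$ with $j$ counting determined segments, so in spirit the two proofs are the same.

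There are two places where, as written, your argument does not quite close. First, the ``crude count'' $N_k'$ is defined as the number of $S_k$-segments \emph{starting} at or after time $n$, but the segments that can genuinely differ between two level-$k$ centers both $d_n$-close to $x$ are those whose $\hat n_k$-window is not entirely contained in $[0,n)$, i.e. those \emph{ending} at or after $n$. A segment straddling time $n$ starts before $n$ but may still be free (the $(\hat n_k,9\epsilon_0/8)$-separation between its two candidate $S_k$-points can manifest only after time $n$), so ``starts at or after $n$'' undercounts by up to one. As stated, the inequality $\#\{z'\in\mathcal{H}_k:\overline B_{t_k}(z',\cdot)\cap B_n(x,\epsilon_0/4)\ne\emptyset\}\le M_k^{N_k'}$ is therefore false; the paper's bound $M_{k^*+1}^{N_{k^*+1}-j}$ corresponds to the correct count. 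The extra factor of $M_k$ is harmless for the final estimate (it costs $\hat n_k=o(n)$ in the exponent), but the count needs to be stated correctly. Your earlier, more careful count $N_k-\lfloor(n-t_{k-1}-L_k)/(\hat n_k+L_k)\rfloor$ is on the safe side and should be kept rather than discarded.

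Second, the identity $\nu\bigl(\overline B_{t_k}(z,\epsilon_0/2^{5+k})\bigr)=\#\mathcal{H}_k^{-1}$ is asserted but not proved. It is true, but it requires a small argument: the closed balls indexed by $\mathcal{H}_k$ are pairwise disjoint and their union has full $\nu$-measure, and by the portmanteau theorem each has $\nu$-measure at least $\limsup_l\nu_{k_l}=\#\mathcal{H}_k^{-1}$, forcing equality since they sum to $1$. The paper sidesteps this entirely by bounding $\nu(B_n(x,\epsilon_0/4))\le\limsup_q\nu_{k^*+q}(B_n(x,\epsilon_0/4))$ directly from openness of $B_n$, and then counting $\#(\mathcal{H}_{k^*+q}\cap B_n(x,\epsilon_0/4))$, which is arguably cleaner and avoids the exactness claim. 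Apart from these two bookkeeping points the proposal is sound and follows the paper's argument.
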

	\begin{proof}[Proof of Lemma \ref{lem17}]
		By the choice of $N_k$ satisfying \eqref{8} and \eqref{pick Nk+1} and choice of  $\hat{n}_k\ge V_k$ satisfying \eqref{Pick Vk1}, there exists $K>0$ such that for any $k>K$ and $j\in\{0,1,...,N_{k+1}-1\}$,
		\begin{equation}\label{pick k1}
			\frac{N_1\hat{n}_1+\ldots+N_k\hat{n}_k+j\hat{n}_{k+1}}{N_1(\hat{n}_1+L_1)+\ldots+N_k(\hat{n}_k+L_k)+j(\hat{n}_{k+1}+L_{k+1})}\geq \frac{S-\frac{7\gamma}{2}}{S-3\gamma},
		\end{equation}and
		\begin{equation}\label{pick K}
			\frac{\hat{n}_{k+1}+L_{k+1}}{N_k}<\frac{\frac{\gamma}{2}}{S-\frac{7\gamma}{2}}.
		\end{equation}
		For any integer $n\geq N:=t_K$, there exist a unique $k^*\geq K$ and a unique	$j\in\{0,1,\ldots,N_{k^*+1}-1\}$ such that \begin{equation}\label{11}
			t_{k^*}+j\cdot(\hat{n}_{k^*+1}+L_{k^*+1})\le n<t_{k^*}+(j+1)\cdot(\hat{n}_{k^*+1}+L_{k^*+1}).
		\end{equation}
		Note that $B_n(x,\epsilon_0/4)$ is an open set, then
		\begin{equation}\label{estimate measure}
			\begin{split}
				\nu(B_n(x,\epsilon_0/4))&\le \liminf_{l\to\infty}\nu_{k_l}(B_n(x,\epsilon_0/4))\leq \limsup_{q\to\infty}\nu_{k^*+q}(B_n(x,\epsilon_0/4))\\ &=\limsup_{q\to\infty}\frac{1}{\#\mathcal{H}_{k^*+q}}\#(\mathcal{H}_{k^*+q}\cap B_n(x,\epsilon_0/4)).
			\end{split}
		\end{equation}
		Therefore, we wish to estimate $\frac{1}{\#\mathcal{H}_{k^*+q}}\#(\mathcal{H}_{k^*+q}\cap B_n(x,\epsilon_0/4))$. The estimation is a 3-step process.
		
		\textbf{Step 1.} We show that $\#(\mathcal{H}_{k^*}\cap B_n(x,\epsilon_0/2))\le1$.
		
		Since any two distinct points in $\mathcal{H}_{k^*}$ are $(t_{k^*},\frac{17\epsilon_0}{16})$-separated, which is  proved in Lemma \ref{lem11}, it follows that $\#(\mathcal{H}_{k^*}\cap B_n(x,\epsilon_0/2))\le\#(\mathcal{H}_{k^*}\cap B_{t_{k^*}}(x,\epsilon_0/2))\le1$.
		
		\textbf{Step 2.} We show that $\#(\mathcal{H}_{k^*+1}\cap B_n(x,\epsilon_0/2))\le (M_{k^*+1})^{N_{k^*+1}-j}$, where $j$ is given in \eqref{11}.
		
		The case for $j=0$ is obtained immediately by Step 1. Now we consider the case for $j\ge 1$. Let $z_1=z(y,y_1^{k^*+1},\ldots,y_{N_{k^*+1}}^{k^*+1}),z_2=z(w,w_1^{k^*+1},\ldots,w_{N_{k^*+1}}^{k^*+1})\in \mathcal{H}_{k^*+1}$ with  $(y,y_1^{k^*+1},\ldots,y_{N_{k^*+1}}^{k^*+1})$, $(w,w_1^{k^*+1},\ldots,w_{N_{k^*+1}}^{k^*+1})\in\mathcal{H}_{k^*}\times ({S}_{k^*+1})^{N_{k^*+1}}$.
		We claim that $y=w$ and $y^{k^*+1}_1=w^{k^*+1}_1, \ldots, y^{k^*+1}_j=w^{k^*+1}_j$. In fact, if $y\not=w$, then they are $(t_{k^*},17\epsilon_0/16)$-separated by Lemma \ref{lem11}, but we also have
		\begin{align*}
			d_{t_{k^*}}(y,w)\leq& d_{t_{k^*}}(y,z_1)+  d_{t_{k^*}}(z_1,x)+ d_{t_{k^*}}(x,z_2)+d_{t_{k^*}}(z_2,w)\\
			\leq  &d_{t_{k^*}}(y,z_1)+  d_{n}(z_1,x)+ d_{n}(x,z_2)+d_{t_{k^*}}(z_2,w)\\
			\overset{\eqref{s2}}  \leq &\frac{\epsilon_0}{2^{5+k^*+1}}+\frac{\epsilon_0}{2}+\frac{\epsilon_0}{2}+\frac{\epsilon_0}{2^{5+k^*+1}}\\
			\leq &33\epsilon_0/32,
		\end{align*}which leads to a contradiction. If there exists $p\in\{1,\ldots,j\}$ such that $y^{k^*+1}_{{p}}\neq w^{k^*+1}_{{p}}$, then $y^{k^*+1}_{p},w^{k^*+1}_{{p}}$ are $(\hat{n}_{k^*+1},{9\epsilon_0}/{8})$-separated, as they belong to $S_{k^*+1}$. However, by \eqref{s2}, we have
		\begin{align*}
			& d_{\hat{n}_{k^*+1}}(y^{k^*+1}_{{p}},w^{k^*+1}_{{p}})\\
			\leq  & d_{\hat{n}_{k^*+1}}(y^{k^*+1}_{{p}},f^{t_{k^*}+L_{k^*+1}+(p-1)(\hat{n}_{k^*+1}+L_{k^*+1})}z_1)+d_{n}(z_1,x)\\&+d_{n}(x,z_2)
			+d_{\hat{n}_{k^*+1}}(f^{t_{k^*}+L_{k^*+1}+(p-1)(\hat{n}_{k^*+1}+L_{k^*+1})}z_2,w^{k^*+1}_{{p}})\\
			\leq & \frac{\epsilon_0}{2^{5+k^*+1}}+\frac{\epsilon_0}{2}+\frac{\epsilon_0}{2}+\frac{\epsilon_0}{2^{5+k^*+1}}\\
			\leq &33\epsilon_0/32
		\end{align*}
		which leads to
		a contradiction. The statement of Step 2 is a direct corollary of the above claim.
		
		\textbf{Step 3.} We show  that \begin{equation}\label{19}
			\#(\mathcal{H}_{k^*+q}\cap B_n(x,\epsilon_0/4))\le M_{k^*+1}^{N_{k^*+1}-j}\prod_{i=2}^qM_{k^*+i}^{N_{k^*+i}} \text{ for any } q\ge 2.
		\end{equation}
		We claim that any $z_q\in\mathcal{H}_{k^*+q}\cap B_n(x,\epsilon_0/4)$ must descend from some point in $\mathcal{H}_{k^*+1}\cap B_n(x,\epsilon_0/2)$. In fact, as $z_q$ descends from  some $z\in\mathcal{H}_{k^*+1}$, then we can apply Lemma \ref{lem12} inductively to show that
		\begin{equation*}
			\bar{B}_{t_{k^*+q}}(z_q,\frac{\epsilon_0}{2^{5+k^*+q}})\subset \bar{B}_{t_{k^*+1}}(z,\frac{\epsilon_0}{2^{5+k^*+1}}).
		\end{equation*}
		It follows that
		\begin{equation*}
			\begin{split}
				d_n(z,x)\le& d_n(z,z_q)+d_n(z_q,x)\le d_{t_{k^*+1}}(z,z_q)+\epsilon_0/4\overset{\eqref{sub}}\le \frac{\epsilon_0}{2^{5+k^*+1}}+\epsilon_0/4<\epsilon_0/2.
			\end{split}
		\end{equation*}
		Thus, by Step 2 and the above claim,  \eqref{19} holds.
		
		Note that
		\begin{align*}
			& M_1^{N_1}\ldots M_{k^*}^{N_{k^*}}\cdot M_{k^*+1}^j \\
			\overset{\eqref{9}}\geq & \exp\left((N_1\hat{n}_1+\ldots+N_{k^*}\hat{n}_{k^*}+j\hat{n}_{k^*+1})(S-3\gamma)|\log5\epsilon_0|\right)\\
			\overset{\eqref{pick k1}}  \geq & \exp\left((N_1(\hat{n}_1+L_1)+\ldots+N_{k^*}(\hat{n}_{k^*}+L_{k^*})+j(\hat{n}_{k^*+1}+L_{k^*+1}))(S-\frac{7\gamma}{2})|\log5\epsilon_0|\right)\\
			\overset{\eqref{11}}\geq  &\exp\left((n-(\hat{n}_{k^*+1}+L_{k^*+1}))(S-\frac{7\gamma}{2})|\log5\epsilon_0|\right)\\
			=&\exp\left(n(1-\frac{\hat{n}_{k^*+1}+L_{k^*+1}}{n})(S-\frac{7\gamma}{2})|\log5\epsilon_0|\right)\\
			\overset{\eqref{pick K}}\geq &\exp\left(n(S-4\gamma)|\log5\epsilon_0|\right).
		\end{align*}
		Now, we have, for each $q\ge 2$,
		\begin{equation*}
			\begin{split}
				\nu_{k^*+q}(B_n(x,\epsilon_0/4))=&\frac{1}{\#\mathcal{H}_{k^*+q}}\#(\mathcal{H}_{k^*+q}\cap B_n(x,\epsilon_0/4))\overset{\eqref{19},\eqref{22}}\le \frac{1}{\#\mathcal{H}_{k^*}\cdot M_{k^*+1}^j}\\
				=&\frac{1}{M_1^{N_1}\ldots M_{k^*}^{N_{k^*}}\cdot M_{k^*+1}^j}\leq \exp(-n(S-4\gamma)|\log5\epsilon_0|).
			\end{split}
		\end{equation*}
		Therefore,
		\[\nu(B_n(x,\epsilon_0/4))\overset{\eqref{estimate measure}}\le \limsup_{q\to\infty}\nu_{k^*+q}(B_n(x,\epsilon_0/4))\le\exp\left(-n\left(S-4\gamma\right)|\log5\epsilon_0|\right).\]The proof of Lemma \ref{lem17} is completed.
	\end{proof}

	\subsection{Apply entropy distribution principle type argument}\label{subsection 3.3}
	Now we are able to finish the proof of Theorem \ref{main} by using the entropy distribution principle type argument.
	
	Let $N$ be the number defined in Lemma \ref{lem17}.
	Let $\Gamma=\{B_{n_i}(x_i,{\epsilon_0}/{4})\}_{i\in I}$ be any finite cover of $\mathcal{X}$ with $n_i\ge N$ for all $i\in I$. Here we only need to consider finite cover since $\mathcal{X}$ is compact. Without loss of generality, we may assume that $B_{n_i}(x_i,{\epsilon_0}/{4})\cap \mathcal{X}\neq \emptyset$ for every $i\in I$. Applying Lemma \ref{lem17} on each $B_{n_i}(x_i,{\epsilon_0}/{4})$, one has
	\[\sum_{i \in I}\exp(-n_i(S-4\gamma)|\log5\epsilon_0|)\ge\sum_{i \in I}\nu(B_{n_i}(x_i,{\epsilon_0}/{4}))\ge\nu(\mathcal{X})\overset{\eqref{meausre}}=1.\]
	As $\Gamma$ is arbitrary, one has
	$$m(\mathcal{X},( S-4\gamma)|\log5\epsilon_0|,N, \epsilon_0/4)\ge 1>0.$$
	Therefore, by the fact that $m(\mathcal{X},( S-4\gamma)|\log5\epsilon_0|,N, \epsilon_0/4)$ does not decrease as $N$ increases, $$m(\mathcal{X},( S-4\gamma)|\log5\epsilon_0|, \epsilon_0/4)\geq 1>0,$$ which implies that
	$$h_{\operatorname{top}}^B(\mathcal{X},f,\epsilon_0/4)\ge (S-4\gamma)|\log5\epsilon_0|.$$
	By Lemma \ref{lem13}, one has
	\begin{equation*}
		\begin{split}
			S-4\gamma\le& \frac{h_{\operatorname{top}}^B(\mathcal{X},f,\epsilon_0/4)}{|\log5\epsilon_0|}\le \frac{h_{\operatorname{top}}^B(I_{\varphi},f,\epsilon_0/4)}{|\log5\epsilon_0|}
			=\frac{h_{\operatorname{top}}^B(I_{\varphi},f,\epsilon_0/4)}{|\log\epsilon_0/4|}\cdot\frac{|\log\epsilon_0/4|}{|\log5\epsilon_0|}\\
			&\overset{\eqref{eq 3.12}}{\le}(\overline{\operatorname{mdim}}_\mathrm{M}^B(I_{\varphi}, f, d)+\gamma )\cdot\frac{|\log\epsilon_0/4|}{|\log5\epsilon_0|}\\
			&\overset{\eqref{eq 3.13}}\le\overline{\operatorname{mdim}}_\mathrm{M}^B(I_{\varphi}, f, d)+2\gamma.
		\end{split}
	\end{equation*}
	Thus, $\overline{\operatorname{mdim}}_\mathrm{M}^B(I_{\varphi}, f, d)\ge S-6\gamma$. As $\gamma>0$ is arbitrary, we obtain $S\le \overline{\operatorname{mdim}}_\mathrm{M}^B(I_{\varphi}, f, d).$ Now we have finished the the proof of
	$\overline{\operatorname{mdim}}_\mathrm{M}(X, f, d)= \overline{\operatorname{mdim}}_\mathrm{M}^B(I_{\varphi}, f, d).$

	\subsection{The case of the lower metric mean dimension}
	In this subsection, we briefly prove the following equation
	\begin{equation*}
		\underline{\operatorname{mdim}}_{\mathrm{M}}^B(I_\varphi, f, d)=\underline{\operatorname{mdim}}_{\mathrm{M}}(X, f, d)
	\end{equation*}under the assumptions $I_\varphi\not=\emptyset$ and $\underline{\operatorname{mdim}}_{\mathrm{M}}(X, f, d)<\infty$.
	
	Proposition \ref{propositiob coincides} and \eqref{eq subset leq} imply $ \underline{\operatorname{mdim}}_{\mathrm{M}}^B(I_\varphi, f, d)\leq \underline{\operatorname{mdim}}_{\mathrm{M}}(X, f, d)$. In the following, we prove $$ \underline{\operatorname{mdim}}_{\mathrm{M}}^B(I_\varphi, f, d)\geq \underline{\operatorname{mdim}}_{\mathrm{M}}(X, f, d).$$ Denote $S^\prime=\underline{\operatorname{mdim}}_{\mathrm{M}}(X, f, d)<\infty$, and we only need to consider the case that $S^\prime>0$. Fixing any sufficiently small $\gamma\in \left(0,\min\left\{{S'}/{7},1\right\}\right)$, we are going to show
	\begin{equation*}
		\underline{\operatorname{mdim}}_{\mathrm{M}}^B(I_\varphi, f, d)\geq S^\prime-6\gamma.
	\end{equation*}We replace Lemma \ref{lem2} by the following lemma.
	\begin{lemma}\label{lemma replace}
		There exists $\epsilon_1=\epsilon_1(\gamma)$ such that
		\begin{align}
			&|\log 5\epsilon_1|>1;\label{r11}\\
			& S^\prime-\gamma/2\le\frac{1}{|\log 5\epsilon_1|} \sup _{\mu \in M(X,f)} \inf _{|\xi|<5\epsilon_1} h_\mu(f, \xi);\label{eq r 1}\\
			&\frac{h_{\operatorname{top}}^B(I_{\varphi},f,\epsilon_1/4)}{|\log{\epsilon_1/4}|}\le\underline{\operatorname{mdim}}_\mathrm{M}^B(I_{\varphi}, f, d)+\gamma;\label{eq r2}\\
			&(\underline{\operatorname{mdim}}_\mathrm{M}^B(I_{\varphi}, f, d)+\gamma )\cdot\frac{|\log\epsilon_1/4|}{|\log5\epsilon_1|} \le\underline{\operatorname{mdim}}_\mathrm{M}^B(I_{\varphi}, f, d)+2\gamma.\label{eq r4}
		\end{align}
		Moreover, there exist $\mu_1, \mu_2 \in M\left(X,f\right)$ such that
		\begin{equation}\label{eq r5}
			\int \varphi d \mu_1 \neq \int \varphi d \mu_2\mbox{ and }\frac{1}{|\log 5\epsilon_1|} \inf _{|\xi|<5\epsilon_1} h_{\mu_i}(f, \xi)>S'-\gamma\mbox{ for $i=1,2$.}
		\end{equation}
	\end{lemma}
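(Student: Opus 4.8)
The plan is to mimic the proof of Lemma~\ref{lem2}; the only genuinely new feature is bookkeeping. Because $S'=\underline{\operatorname{mdim}}_{\mathrm{M}}(X,f,d)$ and $\underline{\operatorname{mdim}}_{\mathrm{M}}^{B}(I_\varphi,f,d)$ are defined through $\liminf$ rather than $\limsup$, the roles of \emph{realized along a subsequence $\epsilon\to0$} and \emph{valid for all sufficiently small $\epsilon$} get swapped relative to Lemma~\ref{lem2}: there the variational lower bound \eqref{eq 3.11} was available only along a subsequence while the entropy estimate \eqref{eq 3.12} held for all small $\epsilon$; here the variational lower bound \eqref{eq r 1} will hold for all small enough $\epsilon$, and \eqref{eq r2} must instead be pulled out along a subsequence realizing the $\liminf$ defining $\underline{\operatorname{mdim}}_{\mathrm{M}}^{B}(I_\varphi,f,d)$.

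First I would fix a threshold $\epsilon^{\ast}=\epsilon^{\ast}(\gamma)>0$ so small that \eqref{r11}, \eqref{eq r4} and \eqref{eq r 1} hold for every $\epsilon_1\in(0,\epsilon^{\ast})$. Condition \eqref{r11} is immediate. Condition \eqref{eq r4} holds because $|\log(\epsilon_1/4)|/|\log 5\epsilon_1|\to1$ as $\epsilon_1\to0$ while $\underline{\operatorname{mdim}}_{\mathrm{M}}^{B}(I_\varphi,f,d)\le S'<\infty$ by \eqref{eq subset leq} and Proposition~\ref{propositiob coincides}, so the left side of \eqref{eq r4} tends to $\underline{\operatorname{mdim}}_{\mathrm{M}}^{B}(I_\varphi,f,d)+\gamma$ and is eventually below $\underline{\operatorname{mdim}}_{\mathrm{M}}^{B}(I_\varphi,f,d)+2\gamma$. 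Condition \eqref{eq r 1} follows from the $\liminf$ assertion in Lemma~\ref{lemma 2.7} (together with \eqref{2} and \eqref{eq 2.8}), since $S'=\liminf_{\epsilon\to0}\tfrac{1}{|\log\epsilon|}\sup_{\mu\in M(X,f)}\inf_{|\xi|<\epsilon}h_\mu(f,\xi)$ forces the quotient to exceed $S'-\gamma/2$ for all sufficiently small $\epsilon$; apply this with $\epsilon=5\epsilon_1$.

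Next, using the definition \eqref{2} of $\underline{\operatorname{mdim}}_{\mathrm{M}}^{B}(I_\varphi,f,d)$ as $\liminf_{\epsilon\to0}\tfrac{h_{\operatorname{top}}^{B}(I_\varphi,f,\epsilon)}{|\log\epsilon|}$, I would pick $\epsilon^{\ast\ast}\in(0,\epsilon^{\ast}/4)$ with $\tfrac{h_{\operatorname{top}}^{B}(I_\varphi,f,\epsilon^{\ast\ast})}{|\log\epsilon^{\ast\ast}|}\le\underline{\operatorname{mdim}}_{\mathrm{M}}^{B}(I_\varphi,f,d)+\gamma$ and set $\epsilon_1:=4\epsilon^{\ast\ast}$. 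Then $\epsilon_1\in(0,\epsilon^{\ast})$, so \eqref{r11}, \eqref{eq r 1}, \eqref{eq r4} are in force, while \eqref{eq r2} holds because $\epsilon_1/4=\epsilon^{\ast\ast}$; note that, unlike \eqref{eq 3.12}, no supremum over an interval of scales is needed, precisely because $\epsilon_1/4$ is itself the distinguished scale.

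Finally I would establish \eqref{eq r5} exactly as \eqref{eq 3.14} is established inside Lemma~\ref{lem2}, with $S$ replaced by $S'$: by \eqref{eq r 1} choose $\mu_1\in M(X,f)$ with $\tfrac{1}{|\log 5\epsilon_1|}\inf_{|\xi|<5\epsilon_1}h_{\mu_1}(f,\xi)>S'-2\gamma/3$; since $I_\varphi\neq\emptyset$, fix $x\in X$ and $n_k\to\infty$ with $\tfrac{1}{n_k}\sum_{i=0}^{n_k-1}\varphi(f^i(x))\to C\neq\int\varphi\,d\mu_1$, and let $\nu\in M(X,f)$ be a weak$^*$ limit of $\tfrac{1}{n_k}\sum_{i=0}^{n_k-1}\delta_{f^i(x)}$, so $\int\varphi\,d\nu=C$; then set $\mu_2=t\mu_1+(1-t)\nu$ with $t\in(0,1)$ close to $1$. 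Concavity of $\mu\mapsto h_\mu(f,\xi)$ together with $h_\nu(f,\xi)\ge0$ gives $\inf_{|\xi|<5\epsilon_1}h_{\mu_2}(f,\xi)\ge t\inf_{|\xi|<5\epsilon_1}h_{\mu_1}(f,\xi)$, so the entropy inequality in \eqref{eq r5} holds once $t$ is close enough to $1$, while $\int\varphi\,d\mu_2=t\int\varphi\,d\mu_1+(1-t)C\neq\int\varphi\,d\mu_1$ since $t<1$. I do not anticipate a real obstacle; the one point needing care is the order of the choices above — the \emph{for all small scales} conditions \eqref{r11}, \eqref{eq r 1}, \eqref{eq r4} must be locked in first through $\epsilon^{\ast}$, and only afterwards may one pass to the special scale $\epsilon_1=4\epsilon^{\ast\ast}$ that yields \eqref{eq r2}.
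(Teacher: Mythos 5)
Your proof is correct and takes essentially the same route as the paper: the paper also first locks in the conditions that hold for all small scales (corresponding to your $\epsilon^{\ast}$, via its $\delta_1$ and $\delta_2$), then passes to a distinguished scale realizing the $\liminf$ defining $\underline{\operatorname{mdim}}_{\mathrm{M}}^{B}(I_\varphi,f,d)$ (your $\epsilon^{\ast\ast}=\epsilon_1/4$), and finally constructs $\mu_1,\mu_2$ exactly as in Lemma~\ref{lem2}. Your remark about which conditions are \emph{for all small $\epsilon$} versus \emph{along a subsequence} being swapped relative to the upper-dimension case, and your explicit use of concavity of $\mu\mapsto h_\mu(f,\xi)$ to justify the choice of $t$, are both correct and slightly more detailed than the paper's streamlined version.
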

	\begin{proof}
		We first pick $\delta_1>0$ such that for any $\epsilon\in(0,\delta_1)$, one has
		\begin{equation}\label{eq r4 s4}
			(\underline{\operatorname{mdim}}_\mathrm{M}^B(I_{\varphi}, f, d)+\gamma )\cdot\frac{|\log\epsilon/4|}{|\log5\epsilon|}
			\le\underline{\operatorname{mdim}}_\mathrm{M}^B(I_{\varphi}, f, d)+2\gamma,\mbox{ and }|\log5\epsilon|>1.
		\end{equation}By Lemma \ref{lemma 2.7}, we pick $\delta_2\in (0,\delta_1)$ such that
		\begin{equation}\label{eq r1s1}
			S^\prime-\frac{\gamma}{2}\leq    \inf_{5\epsilon\in(0,\delta_2)}\frac{1}{|\log 5\epsilon|} \sup _{\mu \in M(X,f)} \inf _{|\xi|<5\epsilon} h_\mu(f, \xi).
		\end{equation}
		Finally, by \eqref{2}, we pick $\epsilon_1\in (0,\frac{\delta_2}{5})$ such that \eqref{eq r2} holds. Note that $5\epsilon_1\in (0,\delta_2)$, then \eqref{eq r 1} is a direct corollary of \eqref{eq r1s1}. Moreover, \eqref{r11} and \eqref{eq r4} are guaranteed by \eqref{eq r4 s4}. The proof of \eqref{eq r5} is similar as the proof of \eqref{eq 3.14}.
	\end{proof}
	We can use the parallel proof in the Subsection \ref{subsection 3.1} and Subsection \ref{subsection 3.2} to show that there exist a Moran-like fractal $\mathcal{X}^\prime\subset I_\varphi$ and a measure $\nu^\prime $ concentrated on $\mathcal{X}^\prime$ satisfying the following property.
	\begin{lemma}\label{lemma r3.8}
		There exists $N^\prime\in\mathbb{N}$ such that for any $n\geq N'$, if $B_n(x,\epsilon_1/4)\cap \mathcal{X}^\prime\not=\emptyset$, then
		\begin{equation*}
			\nu^\prime(B_n(x,\epsilon_1/4))\leq \exp(-n(S^\prime-4\gamma)|\log 5\epsilon_1|).
		\end{equation*}
	\end{lemma}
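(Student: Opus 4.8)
The plan is to carry out the constructions of Subsections~\ref{subsection 3.1} and \ref{subsection 3.2} essentially verbatim, with $\epsilon_0$ replaced by $\epsilon_1$, $S$ replaced by $S'$, Lemma~\ref{lem2} replaced by Lemma~\ref{lemma replace}, and with the same $\gamma$ and the measures $\mu_1,\mu_2$ furnished by Lemma~\ref{lemma replace}. The first observation I would make is that the proof of Lemma~\ref{lem3} is insensitive to the upper/lower dichotomy: it uses only the almost weak specification property together with Lemmas~\ref{lem5}, \ref{lem4} and \ref{lemma leq leq} and the ergodic decomposition, and nothing in it refers to which variational principle is in force. Hence the conclusion of Lemma~\ref{lem3} holds with $\epsilon_1$ in place of $\epsilon_0$ and the measures of Lemma~\ref{lemma replace} in place of $\mu_1,\mu_2$; combining it with \eqref{eq r5} and then \eqref{r11} (using $|\log 5\epsilon_1|>1$) yields, for maximal $(\hat n_k,9\epsilon_1/8)$-separated sets $S_k\subset P(\alpha_{\rho(k)},4\delta_k,\hat n_k)$, the bound
\[
M_k:=\#S_k\ge\exp\bigl\{\hat n_k\bigl((S'-\gamma)|\log5\epsilon_1|-2\gamma\bigr)\bigr\}\ge\exp\bigl\{\hat n_k(S'-3\gamma)|\log5\epsilon_1|\bigr\},
\]
which is exactly the lower-case analogue of \eqref{9}.

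Next I would repeat Step~1--Step~3 of the fractal construction with $\epsilon_1$ throughout: fix $\rho$, a strictly decreasing null sequence $\{\delta_k\}$ with $\delta_1<\gamma$, the tempered functions $L_{\epsilon_1/2^{5+k}}$ and the constants $V_k$ as in \eqref{Pick Vk1}; apply the analogue of Lemma~\ref{lem3} to produce $\hat n_k>V_k$ and the sets $S_k$ above; choose $\{N_k\}$ satisfying the analogues of \eqref{8} and \eqref{pick Nk+1}; and form $\mathcal H_k$, $t_k$, $\mathcal X_k$ and $\mathcal X':=\cap_{k}\mathcal X_k$. Lemmas~\ref{lem10}--\ref{lem12} transcribe word for word (with $\epsilon_1$ everywhere), so distinct points of $\mathcal H_k$ are $(t_k,17\epsilon_1/16)$-separated and the nested inclusion \eqref{sub} holds, while Lemma~\ref{lem13} gives $\mathcal X'\subset I_\varphi$ because $\rho$ alternates between $1$ and $2$. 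Setting $\nu'_k:=\frac{1}{\#\mathcal H_k}\sum_{x\in\mathcal H_k}\delta_x$ and letting $\nu'$ be a weak$^*$ subsequential limit, the closedness argument leading to \eqref{meausre} shows $\nu'(\mathcal X')=1$.

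Finally, Lemma~\ref{lemma r3.8} itself is proved exactly as Lemma~\ref{lem17}. I would choose $K$ large enough that for all $k>K$ and $j\in\{0,1,\ldots,N_{k+1}-1\}$ the analogues of \eqref{pick k1} and \eqref{pick K} hold --- the denominator $S'-3\gamma$ occurring there is positive since $\gamma<S'/7$ --- and put $N':=t_K$. For $n\ge N'$ with $B_n(x,\epsilon_1/4)\cap\mathcal X'\neq\emptyset$ I would pick the unique $k^*\ge K$ and $j$ as in \eqref{11}, estimate $\nu'(B_n(x,\epsilon_1/4))\le\limsup_q\nu'_{k^*+q}(B_n(x,\epsilon_1/4))$, and run the three counting steps: at most one point of $\mathcal H_{k^*}$ lies in $B_n(x,\epsilon_1/2)$ by $(t_{k^*},17\epsilon_1/16)$-separation; points of $\mathcal H_{k^*+1}$ inside $B_n(x,\epsilon_1/2)$ agree in their first $j$ coordinates, so there are at most $M_{k^*+1}^{N_{k^*+1}-j}$ of them; and, via Lemma~\ref{lem12}, every point of $\mathcal H_{k^*+q}\cap B_n(x,\epsilon_1/4)$ descends from one of those, giving $\#(\mathcal H_{k^*+q}\cap B_n(x,\epsilon_1/4))\le M_{k^*+1}^{N_{k^*+1}-j}\prod_{i=2}^{q}M_{k^*+i}^{N_{k^*+i}}$. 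Dividing by $\#\mathcal H_{k^*+q}$ from \eqref{22} and bounding $M_1^{N_1}\cdots M_{k^*}^{N_{k^*}}M_{k^*+1}^{j}$ below by combining the displayed estimate for $M_k$ with \eqref{pick k1}, \eqref{11} and \eqref{pick K} gives $\nu'_{k^*+q}(B_n(x,\epsilon_1/4))\le\exp(-n(S'-4\gamma)|\log5\epsilon_1|)$, and $q\to\infty$ finishes it. I do not expect a genuine obstacle: all the substantive work lies in Lemmas~\ref{lem3} and \ref{lem17}, and the only point demanding care is the bookkeeping that Lemma~\ref{lemma replace} is a legitimate drop-in replacement for Lemma~\ref{lem2}, i.e.\ that every later step uses the chosen scale only through $\frac{1}{|\log5\epsilon_1|}\inf_{|\xi|<5\epsilon_1}h_{\mu_i}(f,\xi)>S'-\gamma$ and $|\log5\epsilon_1|>1$; once that is checked, the entire argument of Subsections~\ref{subsection 3.1}--\ref{subsection 3.2} and of Lemma~\ref{lem17} transfers unchanged.
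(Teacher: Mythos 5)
Your proposal is correct and matches the paper's own treatment: the paper proves Lemma~\ref{lemma r3.8} precisely by asserting that ``the parallel proof in Subsection~\ref{subsection 3.1} and Subsection~\ref{subsection 3.2}'' applies, and your write-up supplies exactly the bookkeeping that citation elides --- in particular the observations that Lemma~\ref{lem3} never refers to the upper/lower dichotomy, that \eqref{eq r5} together with \eqref{r11} yields the analogue of~\eqref{9}, and that $\gamma<S'/7$ keeps the denominator in the analogue of~\eqref{pick k1} positive.
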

	Let $N^\prime$ be the number defined in Lemma \ref{lemma r3.8}.
	Let $\Gamma=\{B_{n_i}(x_i,{\epsilon_1}/{4})\}_{i\in I}$ be any finite cover of $\mathcal{X}^\prime$ with $n_i\ge N^\prime$ for all $i\in I$. Without loss of generality, we may assume that $B_{n_i}(x_i,{\epsilon_1}/{4})\cap \mathcal{X}^\prime\neq \emptyset$ for every $i\in I$. Applying Lemma \ref{lemma r3.8} on each $B_{n_i}(x_i,{\epsilon_1}/{4})$, one has
	\[\sum_{i \in I}\exp(-n_i(S^\prime-4\gamma)|\log5\epsilon_1|)\ge\sum_{i \in I}\nu^\prime(B_{n_i}(x_i,{\epsilon_1}/{4}))\ge\nu^\prime(\mathcal{X}^\prime)=1.\]
	As $\Gamma$ is arbitrary, one has
	$$m(\mathcal{X}^\prime,(S'-4\gamma)|\log5\epsilon_1|,N^\prime, \epsilon_1/4)\ge 1>0.$$
	Therefore,  by the fact that $m(\mathcal{X},( S-4\gamma)|\log5\epsilon_0|,N, \epsilon_0/4)$ does not decrease as $N$ increases, $$m(\mathcal{X}^\prime,(S'-4\gamma)|\log5\epsilon_1|,\epsilon_1/4)>0,$$ which implies that $$h_{\operatorname{top}}^B(\mathcal{X}^\prime,f,\epsilon_1/4)\ge (S^\prime-4\gamma)|\log5\epsilon_1|.$$
	By Lemma \ref{lem13}, one has
	\begin{equation*}
		\begin{split}
			S^\prime-4\gamma\le& \frac{h_{\operatorname{top}}^B(\mathcal{X}^\prime,f,\epsilon_1/4)}{|\log5\epsilon_1|}\le \frac{h_{\operatorname{top}}^B(I_{\varphi},f,\epsilon_1/4)}{|\log5\epsilon_1|}
			=\frac{h_{\operatorname{top}}^B(I_{\varphi},f,\epsilon_1/4)}{|\log\epsilon_1/4|}\cdot\frac{|\log\epsilon_1/4|}{|\log5\epsilon_1|}\\
			&\overset{\eqref{eq r2}}{\le}(\underline{\operatorname{mdim}}_\mathrm{M}^B(I_{\varphi}, f, d)+\gamma )\cdot\frac{|\log\epsilon_1/4|}{|\log5\epsilon_1|}\\
			&\overset{\eqref{eq r4}}\le\underline{\operatorname{mdim}}_\mathrm{M}^B(I_{\varphi}, f, d)+2\gamma.
		\end{split}
	\end{equation*}
	Thus, $\underline{\operatorname{mdim}}_\mathrm{M}^B(I_{\varphi}, f, d)\ge S^\prime-6\gamma$. As $\gamma>0$ is arbitrary, we obtain $S^\prime\le \underline{\operatorname{mdim}}_\mathrm{M}^B(I_{\varphi}, f, d).$ So we finish the proof of
	$\underline{\operatorname{mdim}}_\mathrm{M}(X, f, d)= \underline{\operatorname{mdim}}_\mathrm{M}^B(I_{\varphi}, f, d).$
	
	The proof of Theorem \ref{main} is completed.

	%%%%%%%%%%%%%%%%%%%%%%%%%%%%%%%%%%%%%%%%%%%%%%%%%%%%%%%%%%%%%%%%%%%%%%%%%%%%%%%%%%%%%%
	\section{Application and Examples}\label{section 4}
	In this section, we discuss some examples and applications, and we only consider the case for the upper metric dimension, while the case for the lower metric dimension is similar.
	
	The following example is the main example that Theorem \ref{main} can be applied.
	\begin{example}
		Let $(K,d)$ be any compact metric space with the upper box-counting dimension $\overline{dim}_B(K)<\infty$. Then $\overline{mdim}_M(K^\mathbb{Z},\sigma ,d^\prime)=\overline{dim}_B(K,d)<\infty$ (see e.g. \cite[Corollary 16]{S}). We consider left-shift map $\sigma$ on the compact metric space $K^\mathbb{Z}$ equipped with the metric
		\begin{equation*}
			d^\prime((x_n)_{n\in\mathbb{Z}},(y_n)_{n\in\mathbb{Z}}):=\sum_{n\in\mathbb{Z}}\frac{d(x_n,y_n)}{2^{|n|}}
		\end{equation*}for any $(x_n)_{n\in\mathbb{Z}},(y_n)_{n\in\mathbb{Z}}\in K^\mathbb{Z}$. By \cite[Proposition 21.2]{DGS}, system $(K^\mathbb{Z},\sigma)$ has the specification property and therefore  the almost weak specification property.
	\end{example}

	Next, we discuss an application of Theorem \ref{main}.
	Let $(X,d,f)$ and $(Y,\rho,g)$ be any two TDSs. On the product space $X\times Y$, we consider the metric
	\[(d\times \rho)((x,y),(x',y'))=d(x,x')+\rho(y,y')\text{ for any }x,x'\in X\text{ and }y,y'\in Y.\]  Similar to the box dimension, Acevedo \cite[Theorem 3.7]{Ac} showed that
	\[\overline{\operatorname{mdim}}_\mathrm{M}(X\times Y, f\times g, d\times \rho)\le \overline{\operatorname{mdim}}_\mathrm{M}(X, f, d)+\overline{\operatorname{mdim}}_\mathrm{M}(Y, g, \rho).\]
	Note that if $f$ and $g$ have the almost weak specification property, $f\times g$ also has the almost weak specification property (see \cite[Lemma 2.2]{Sun2017}). As an application of Theorem \ref{main}, we immediately obtain the following corollary.
	\begin{corollary}
		Let $(X,f,d)$ and $(Y,g,\rho)$ be two TDSs with the almost weak specification property. Assume that $\overline{\operatorname{mdim}}_\mathrm{M}(X, f, d)<\infty$ and $\overline{\operatorname{mdim}}_\mathrm{M}(Y, g, \rho)<\infty$. Then for any two continuous observables $\varphi\in C(X,\mathbb{R})$ and $\psi\in C(Y,\mathbb{R})$, one has
		\[\overline{\operatorname{mdim}}_\mathrm{M}^B(I_{\varphi\times\psi}, f\times g, d\times \rho)\le \overline{\operatorname{mdim}}_\mathrm{M}^B(I_\varphi, f, d)+\overline{\operatorname{mdim}}_\mathrm{M}^B(I_\psi, g, \rho).\]
	\end{corollary}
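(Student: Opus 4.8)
The plan is to squeeze $I_{\varphi\times\psi}$ between an explicit subset of $X\times Y$ and $X\times Y$ itself, push a classical product estimate through, and then re-insert Theorem~\ref{main} on each factor. Reading $\varphi\times\psi$ as the function $(x,y)\mapsto\varphi(x)+\psi(y)$ on $X\times Y$ and using $(f\times g)^i(x,y)=(f^ix,g^iy)$, the Birkhoff average of $\varphi\times\psi$ at $(x,y)$ equals $\frac1n\sum_{i=0}^{n-1}\varphi(f^ix)+\frac1n\sum_{i=0}^{n-1}\psi(g^iy)$, i.e. the sum of the Birkhoff average of $\varphi$ at $x$ and that of $\psi$ at $y$. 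Hence if $x\notin I_\varphi$ and $y\notin I_\psi$ this average converges, and taking contrapositives,
\[
I_{\varphi\times\psi}\subseteq(I_\varphi\times Y)\cup(X\times I_\psi)\subseteq X\times Y.
\]
If $I_{\varphi\times\psi}=\emptyset$ there is nothing to prove, so assume $I_{\varphi\times\psi}\neq\emptyset$; I will run the argument in the principal case where $I_\varphi\neq\emptyset$ and $I_\psi\neq\emptyset$.

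First I would invoke monotonicity \eqref{eq subset leq} for $I_{\varphi\times\psi}\subseteq X\times Y$, then Proposition~\ref{propositiob coincides} (since $X\times Y$ is compact and $f\times g$-invariant) to pass from the Bowen to the classical metric mean dimension on $X\times Y$, and finally Acevedo's product inequality quoted just before the statement:
\begin{align*}
\overline{\operatorname{mdim}}_\mathrm{M}^B(I_{\varphi\times\psi},f\times g,d\times\rho)
&\le\overline{\operatorname{mdim}}_\mathrm{M}^B(X\times Y,f\times g,d\times\rho)\\
&=\overline{\operatorname{mdim}}_\mathrm{M}(X\times Y,f\times g,d\times\rho)\\
&\le\overline{\operatorname{mdim}}_\mathrm{M}(X,f,d)+\overline{\operatorname{mdim}}_\mathrm{M}(Y,g,\rho).
\end{align*}
Now $f$ has the almost weak specification property and $\overline{\operatorname{mdim}}_\mathrm{M}(X,f,d)<\infty$, so Theorem~\ref{main} applied to $(X,d,f)$ with the observable $\varphi$, together with $I_\varphi\neq\emptyset$, gives $\overline{\operatorname{mdim}}_\mathrm{M}(X,f,d)=\overline{\operatorname{mdim}}_\mathrm{M}^B(I_\varphi,f,d)$; likewise $\overline{\operatorname{mdim}}_\mathrm{M}(Y,g,\rho)=\overline{\operatorname{mdim}}_\mathrm{M}^B(I_\psi,g,\rho)$. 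Substituting these into the last line of the display finishes the proof.

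There is no serious obstacle here: the corollary is an assembly of Theorem~\ref{main}, Proposition~\ref{propositiob coincides}, the monotonicity \eqref{eq subset leq}, and the known product bound for the classical metric mean dimension, the only genuine step being the inclusion $I_{\varphi\times\psi}\subseteq X\times Y$ that lets Theorem~\ref{main} re-enter factor by factor (note that the stronger inclusion $I_{\varphi\times\psi}\subseteq(I_\varphi\times Y)\cup(X\times I_\psi)$ is exactly what forces at least one of $I_\varphi,I_\psi$ to be nonempty once $I_{\varphi\times\psi}\neq\emptyset$). The one point that warrants a word of care is the degenerate bookkeeping: the statement is really about the case $I_\varphi\neq\emptyset$ and $I_\psi\neq\emptyset$, and if one wants the fully general formulation one should either exclude the cases where $I_\varphi$ or $I_\psi$ is empty or fix a convention for $\overline{\operatorname{mdim}}_\mathrm{M}^B(\emptyset)$, since when exactly one factor has empty irregular set the right-hand side drops that factor's contribution while $I_{\varphi\times\psi}$ (which then equals $I_\varphi\times Y$ or $X\times I_\psi$) can still carry positive Bowen metric mean dimension.
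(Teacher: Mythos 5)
Your argument is correct and lands the same inequality, but by a slightly lighter route than the one the paper signposts. The text just before the corollary sets up two ingredients: Acevedo's product bound for the classical metric mean dimension, and the fact (cited from Sun) that $f\times g$ inherits the almost weak specification property from $f$ and $g$. The second ingredient is there so that Theorem~\ref{main} can be applied to the \emph{product} system, giving $\overline{\operatorname{mdim}}_\mathrm{M}^B(I_{\varphi\times\psi},f\times g,d\times\rho)=\overline{\operatorname{mdim}}_\mathrm{M}(X\times Y,f\times g,d\times\rho)$ (when the irregular set is nonempty), which is then combined with Acevedo's bound and Theorem~\ref{main} on each factor. You replace that equality with the weaker chain $\overline{\operatorname{mdim}}_\mathrm{M}^B(I_{\varphi\times\psi})\le\overline{\operatorname{mdim}}_\mathrm{M}^B(X\times Y)=\overline{\operatorname{mdim}}_\mathrm{M}(X\times Y)$ via monotonicity \eqref{eq subset leq} and Proposition~\ref{propositiob coincides}, which is all the corollary needs since only an upper bound is asserted; as a result your proof never uses the almost weak specification property of $f\times g$ at all, only that of the two factors. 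The remaining steps (Acevedo's inequality, then Theorem~\ref{main} applied to $(X,f,\varphi)$ and $(Y,g,\psi)$) are identical in the two proofs. Your closing caveat about the degenerate cases $I_\varphi=\emptyset$ or $I_\psi=\emptyset$ is fair and applies equally to the paper's own argument: as written, the corollary implicitly assumes both factor irregular sets are nonempty (or a convention for $\overline{\operatorname{mdim}}_\mathrm{M}^B(\emptyset)$), and your inclusion $I_{\varphi\times\psi}\subseteq(I_\varphi\times Y)\cup(X\times I_\psi)$ cleanly isolates exactly when this matters.
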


	The following example shows that the above inequality can be strict.
	\begin{example}
		Authors in \cite[Theorem 3]{WWW} showed that for any  $\beta, \gamma, \lambda>0$ with  $\lambda \leq \gamma+\beta$, there are two compact metric spaces $(E,d_E)$ and $(F,d_F)$ satisfying
		$$
		\overline{\operatorname{dim}}_B (F,d_F)=\beta, \text{ }
		\overline{\operatorname{dim}}_B (E,d_E)=\gamma, \text{ and } \overline{\operatorname{dim}}_B(E \times F,d_E\times d_F)=\lambda.
		$$
		In particular, there are two compact metric spaces $(X,d)$ and  $(Y,\rho)$ such that
		\begin{equation}\label{m3}
			\overline{\operatorname{dim}}_B(X\times Y,d\times \rho)<\overline{\operatorname{dim}}_B(X,d)+\overline{\operatorname{dim}}_B(Y,\rho)<\infty.
		\end{equation}Fix any two compact metric spaces $(X,d)$ and $(Y,\rho)$ satisfying \eqref{m3}.
		Define The metric $\tilde{d} \times \tilde{\rho}$ on $X^\mathbb{Z} \times Y^\mathbb{Z}$ by
		$$
		(\widetilde{d} \times \widetilde{\rho})((\bar{x}, \bar{y}),(\bar{z}, \bar{w}))=\sum_{i \in\mathbb{ Z}} \frac{1}{2^{|i|}} d\left(x_i, z_i\right)+\sum_{i \in \mathbb{Z}} \frac{1}{2^{|i|}} \rho\left(y_i, w_i\right),
		$$
		for $\bar{x}=\left(x_i\right)_{i \in \mathbb{Z}}, \bar{z}=\left(z_i\right)_{i \in \mathbb{Z}} \in X^\mathbb{Z}$ and  $\bar{y}=\left(y_i\right)_{i \in \mathbb{Z}}, \bar{w}=\left(w_i\right)_{i \in \mathbb{Z}} \in Y^\mathbb{Z}$. Let $\sigma_1$ and $\sigma_2$ be the left-shift maps on $X^\mathbb{Z}$ and $Y^\mathbb{Z}$, respectively. Then both of them have the specification property. Note that
		\begin{equation}\label{m1}
			\overline{\operatorname{mdim}}_{\mathrm{M}}\left(X^\mathbb{Z} ,\widetilde d, \sigma_1 \right)=\overline{\operatorname {dim}}_B(X,d),\text{ }\overline{\operatorname{mdim}}_{\mathrm{M}}\left(Y^\mathbb{Z} ,\widetilde \rho, \sigma_2 \right)=\overline{\operatorname {dim}}_B(Y,\rho),
		\end{equation}
		and
		\begin{equation}\label{m2}
			\overline{\operatorname{mdim}}_{\mathrm{M}}\left(X^\mathbb{Z} \times Y^\mathbb{Z}, \widetilde{d} \times \widetilde{\rho}, \sigma_1 \times \sigma_2\right)=\overline{\operatorname {dim}}_B(X\times Y,d\times\rho).
		\end{equation}
		Thus, by \eqref{m3}, \eqref{m1} and \eqref{m2}, one has
		\[\overline{\operatorname{mdim}}_{\mathrm{M}}\left(X^\mathbb{Z} \times Y^\mathbb{Z}, \widetilde{d} \times \widetilde{\rho}, \sigma_1 \times \sigma_2\right) <\overline{\operatorname{mdim}}_{\mathrm{M}}\left(X^{\mathbb{Z}}, \widetilde{d}, \sigma_1\right)+\overline{\operatorname{mdim}}_{\mathrm{M}}\left(Y^\mathbb{Z}, \widetilde{\rho}, \sigma_2\right).\]
		Taking Theorem \ref{main} and Proposition \ref{propositiob coincides} into account, we conclude that for any two continuous observables $\varphi: X\to \mathbb{R}$ and $\psi:Y\to\mathbb{R}$, one has \[\overline{\operatorname{mdim}}_\mathrm{M}^B(I_{\varphi\times\psi}, \sigma_1\times \sigma_2, \widetilde d\times \widetilde\rho)< \overline{\operatorname{mdim}}_\mathrm{M}^B(I_\varphi, \sigma_1,\widetilde d)+\overline{\operatorname{mdim}}_\mathrm{M}^B(I_\psi, \sigma_2, \widetilde\rho).\]
	\end{example}
	
	%\section*{Acknowledgement}
	%C. Liu was partially supported by NNSF of China (12090012).
	
	%%%%%%%%%%%%%%%%%%%%%%%%%%%%%%%%%%%%%%%%%%%%%%%%%%%%%%%%%%%%%%%%%%%%%%
	\appendix
	\section{Some proof.}\label{section appendix}\label{appendix}
	\begin{proof}[Proof of Proposition \ref{propositiob coincides}]
		It is sufficient to prove the case $Z=X$, otherwise we can consider the subsystem $(Z,f)$. We only prove the case for the upper metric mean dimension, as the case for the lower metric mean dimension is similar.
		
		Firstly, we prove that $\overline{\operatorname{mdim}}_{\mathrm{M}}(X, f, d)\ge\overline{\operatorname{mdim}}_{\mathrm{M}}^B(X, f, d)$. For any $r>\overline{\operatorname{mdim}}_{\mathrm{M}}(X, f, d)$,  by the definition \eqref{1}, there exists $\epsilon_0>0$ such that for any $\epsilon\in(0,\epsilon_0)$, $h_{\operatorname{top}}(f,X,\epsilon)<r|\log\epsilon|$. Moreover, there exists $N_0>0$ such that for any $n>N_0$, $\frac{1}{n}\log s(f,X,n,\epsilon)<r|\log\epsilon|$, and so
		$s(f,X,n,\epsilon)<\exp(nr|\log\epsilon|)$. By the definition \eqref{def m Z,s,Nepsilon} and the fact that maximal $(n,\epsilon)$-separated is $(n,\epsilon)$-spanning,
		one has
		\[m(X,r|\log\epsilon|,n,\epsilon)\le s(f,X,n,\epsilon)\exp(-nr|\log\epsilon|)<1.\] Therefore, $m(X,r|\log\epsilon|,\epsilon)\le1$. As a consequence, $h_{\operatorname{top}}^B(X,f,\epsilon)\le r|\log\epsilon|$ for any $\epsilon\in (0,\epsilon_0)$ and hence $\overline{\operatorname{mdim}}_{\mathrm{M}}^B(X, f, d)\le r$. As $r>\overline{\operatorname{mdim}}_{\mathrm{M}}(X, f, d)$ is arbitrary, one obtains $\overline{\operatorname{mdim}}_{\mathrm{M}}(X, f, d)\ge \overline{\operatorname{mdim}}_{\mathrm{M}}^B(X, f, d)$.
		
		Secondly, to show $\overline{\operatorname{mdim}}_{\mathrm{M}}(X, f, d)\le\overline{\operatorname{mdim}}_{\mathrm{M}}^B(X, f, d)$, we need  some preparations, which can be found in \cite[Chap. 4, Sec. 11]{Pesin}. For any nonempty subset $Z\subset X$, $s\in\mathbb{R}$, $N\in\mathbb{N}$ and $\epsilon>0$, we  define
		\begin{equation}\label{def m appendix}
			M(Z, s, N, \epsilon)=\inf _{\Gamma}\left\{\sum_{i \in I} \exp \left(-sN\right)\right\},
		\end{equation}
		where the infimum is taken over all finite or countable covers $\Gamma=\left\{B_{N}\left(x_i, \epsilon\right)\right\}_{i \in I}$ of $Z$.
		Let
		\[\overline M(Z, s, \epsilon)=\limsup_{N \rightarrow \infty} M(Z, s, N, \epsilon),\ \underline M(Z, s, \epsilon)=\liminf_{N \rightarrow \infty} M(Z, s, N, \epsilon).\] Then the following quantities exist
		\begin{align*}
			\overline{CP}(Z,\epsilon)&:=\inf \{s\in\mathbb{R}: \overline M(Z, s, \epsilon)=0\}=\sup \{s\in\mathbb{R}: \overline M(Z, s, \epsilon)=\infty\},\\
			\underline{CP}(Z,\epsilon)&:=\inf \{s\in\mathbb{R}: \underline M(Z, s, \epsilon)=0\}=\sup \{s\in\mathbb{R}: \underline M(Z, s, \epsilon)=\infty\}.
		\end{align*}We define the upper and lower capacity metric mean dimension by
		\begin{align*}
			\overline{CPmdim}_{\mathrm{M}}(Z,f,d)=\limsup_{\epsilon\to 0}\frac{\overline{CP}(Z,\epsilon)}{|\log\epsilon|}
		\end{align*}and
		\begin{equation*}
			\underline{CPmdim}_{\mathrm{M}}(Z,f,d)=\liminf_{\epsilon\to 0}\frac{\underline{CP}(Z,\epsilon)}{|\log\epsilon|}.
		\end{equation*}
		For any continuous function $\varphi:X\to \mathbb{R}$, and a finite open cover $\mathcal{U}$, let $P_Z(\varphi,\mathcal{U})$, $\underline{CP}_Z(\varphi,\mathcal{U})$ and $\overline{CP}_Z(\varphi,\mathcal{U})$ be defined in \cite[Theorem 11.1]{Pesin}.  \cite[Theorem 11.5]{Pesin} shows that for any compact $f$-invariant set $Z$, one has
		\begin{equation*}\label{}
			P_Z(\varphi,\mathcal{U}) =\underline{CP}_Z(\varphi,\mathcal{U})=\overline{CP}_Z(\varphi,\mathcal{U}).
		\end{equation*}In particular, for $\varphi=0$, $Z=X$, we have
		\begin{equation}\label{A 2}
			P_X(0,\mathcal{U}) =\underline{CP}_X(0,\mathcal{U})=\overline{CP}_X(0,\mathcal{U}).
		\end{equation}We denote $P_X(\mathcal{U}):= P_X(0,\mathcal{U})$, $\underline{CP}_X(\mathcal{U}):=\underline{CP}_X(0,\mathcal{U})$, and $\overline{CP}_X(\mathcal{U}):=\overline{CP}_X(0,\mathcal{U})$ for short. The relationship (11.11) in \cite[Page 74]{Pesin} implies that
		\begin{align}
			\overline{CP}(X,2\operatorname{diam}(\mathcal{U})) &\leq \overline{CP}_X(\mathcal{U})\leq \overline{CP}(X,\frac{1}{2}Leb(\mathcal{U})),\label{A3}\\
			\underline{CP}(X,2\operatorname{diam}(\mathcal{U})) &\leq \underline{CP}_X(\mathcal{U})\leq \underline{CP}(X,\frac{1}{2}Leb(\mathcal{U})),\label{A4}
		\end{align}and
		\begin{equation}\label{A leq leq}
			h_{\operatorname{top}}^B(X,f,2\operatorname{diam}(\mathcal{U}))\leq P_X(\mathcal{U})\leq  h_{\operatorname {top}}^B(X,f,\frac{1}{2}Leb(\mathcal{U})).
		\end{equation}Now for any $\epsilon>0$, by Lemma \ref{lem4},
		there exists a finite open cover $\mathcal{U}_\epsilon$ of $X$ such that $\operatorname{diam}(\mathcal{U}_\epsilon)\leq \epsilon$ and $Leb(\mathcal{U}_\epsilon)\geq \frac{\epsilon}{4}.$ Now, on the one hand, we have
		\begin{align*}
			\overline{\operatorname{mdim}}_{\mathrm{M}}^B(X, f, d)  & =\limsup_{\epsilon\to 0}\frac{h_{\operatorname{top}}^B(X,f,\epsilon)}{|\log\epsilon|}=\limsup_{\epsilon\to 0}\frac{h_{\operatorname{top}}^B(X,f,2\epsilon)}{|\log2\epsilon|}\leq \limsup_{\epsilon\to 0}\frac{h_{\operatorname{top}}^B(X,f,2\operatorname{diam}(\mathcal{U}_\epsilon))}{|\log2\epsilon|}\\
			&\overset{\eqref{A leq leq}}\leq \limsup_{\epsilon\to 0}\frac{P_X(\mathcal{U}_\epsilon)}{|\log2\epsilon|}\overset{\eqref{A 2}}= \limsup_{\epsilon\to 0}\frac{\overline{CP}_X(\mathcal{U}_\epsilon)}{|\log2\epsilon|}\overset{\eqref{A3}}\leq \limsup_{\epsilon\to 0}\frac{\overline{CP}(X,\frac{1}{2}Leb(\mathcal{U}_\epsilon))}{|\log2\epsilon|}\\
			&\leq \limsup_{\epsilon\to 0}\frac{\overline{CP}(X,{\epsilon}/{8})}{|\log2\epsilon|}
			= \limsup_{\epsilon\to 0}\frac{\overline{CP}(X,\epsilon)}{|\log\epsilon|}=\overline{CPmdim}_{\mathrm{M}}(X,f,d).
		\end{align*}On the other hand, we also have
		\begin{align*}
			\overline{CPmdim}_{\mathrm{M}}(X,f,d) & =\limsup_{\epsilon\to 0}\frac{\overline{CP}(X,\epsilon)}{|\log\epsilon|}=\limsup_{\epsilon\to 0}\frac{\overline{CP}(X,2\epsilon)}{|\log2\epsilon|}\leq \limsup_{\epsilon\to 0}\frac{\overline{CP}(X,2\operatorname{diam}(\mathcal{U}_\epsilon))}{|\log2\epsilon|}\\
			&\overset{\eqref{A3}}\leq  \limsup_{\epsilon\to 0}\frac{\overline{CP}_X(\mathcal{U}_\epsilon)}{|\log2\epsilon|}\overset{\eqref{A 2}}=\limsup_{\epsilon\to 0}\frac{P_X(\mathcal{U}_\epsilon)}{|\log2\epsilon|}\overset{\eqref{A leq leq}}\leq \limsup_{\epsilon\to 0}\frac{h_{\operatorname{top}}^B(X,f,\frac{1}{2}Leb(\mathcal{U}_\epsilon))}{|\log2\epsilon|}\\
			&\leq  \limsup_{\epsilon\to 0}\frac{h_{\operatorname{top}}^B(X,f,{\epsilon}/{8})}{|\log2\epsilon|}=\limsup_{\epsilon\to 0}\frac{h_{\operatorname{top}}^B(X,f,\epsilon)}{|\log\epsilon|}=\overline{\operatorname{mdim}}_{\mathrm{M}}^B(X, f, d).
		\end{align*}Therefore, we obtain
		\begin{equation*}
			\overline{\operatorname{mdim}}_{\mathrm{M}}^B(X, f, d)=\overline{CPmdim}_{\mathrm{M}}(X,f,d).
		\end{equation*}
		Now for any $r> \overline{\operatorname{mdim}}_{\mathrm{M}}^B(X, f, d)=\overline{CPmdim}_{\mathrm{M}}(X, f,d)$, there exists $\epsilon_0>0$ such that for any $\epsilon\in(0,\epsilon_0)$, $\overline{CP}(X, \epsilon)< r|\log\epsilon|,$ which means that
		$\overline{M}(X,r|\log\epsilon|,\epsilon)=0$. So for any $\delta\in(0,1)$ there exists $N_0>0$ such that for any $N>N_0$, $M(X,r|\log\epsilon|,N,\epsilon)<\delta/2$. So, by definition \eqref{def m appendix}, there exists a finite cover $\Gamma=\{B_N(x_i,\epsilon)\}_{i\in I}$ such that
		\begin{equation}\label{21}
			\exp\left(-r|\log\epsilon|N\right)\cdot\#I<\delta.
		\end{equation}
		Note that $\{x_i\}_{i\in I}$ is a $(N,\epsilon)$-spanning set of $X$. Then
		$s(f,X,N,2\epsilon)$, the largest cardinality of all maximal $(N,2\epsilon)$-separated subset of $X$, is less than or equal to $\#I$. So by \eqref{21}, for any $\epsilon\in(0,\epsilon_0)$ and $N>N_0$,
		\begin{equation*}\label{20}
			\begin{split}
				\exp\left(-r|\log\epsilon|N\right)\cdot s(f,X,N,2\epsilon)<\delta.
			\end{split}
		\end{equation*}Note that $N>N_0$ is arbitrary, and  therefore we have
		\begin{equation*}
			h_{\operatorname{top}}(f,2\epsilon)=\lim_{n\to\infty}\frac{1}{n}\log s(f,X,n,2\epsilon)\leq r|\log\epsilon|,
		\end{equation*}which implies
		\begin{equation*}
			\overline{\operatorname{mdim}}_{\mathrm{M}}(X, f, d)=\limsup_{\epsilon\to 0}\frac{ h_{\operatorname{top}}(f,\epsilon)}{|\log\epsilon|}=\limsup_{\epsilon\to 0}\frac{ h_{\operatorname{top}}(f,2\epsilon)}{|\log2\epsilon|}\leq r.
		\end{equation*}Since $r> \overline{\operatorname{mdim}}_{\mathrm{M}}^B(X, f, d)$ is arbitrary, we obtain $\overline{\operatorname{mdim}}_{\mathrm{M}}(X, f, d)\leq \overline{\operatorname{mdim}}_{\mathrm{M}}^B(X, f, d).$
		The proof of Proposition \ref{propositiob coincides} is completed.
	\end{proof}

	\begin{proof}[Proof of Lemma \ref{lem5}]
		Given a finite open cover $\mathcal{U}$ of $X$, $\varphi\in C(X,\mathbb{R})$, a measure $\mu\in M(X,f)$, and any $\delta>0$.
		It is well known that the weak$^*$-topology on $M(X)$ is metrizable, and denote $d^*$ to be one of the compatible metrics. Let $\beta>0$ be sufficiently small such that for every $\tau_1, \tau_2 \in$ $M(X,f)$,
		$$
		d^*\left(\tau_1, \tau_2\right)<\beta \Longrightarrow\left|\int \varphi d \tau_1-\int \varphi d \tau_2\right|<\delta.
		$$
		Take  a partition $\mathcal{P}=\left\{P_1, \ldots, P_{j}\right\}$ of $M(X,f)$ whose diameter with respect to $d^*$ is smaller than $\beta$. By the ergodic decomposition theorem (see for example \cite[Page 153, Remark (2)]{Peter}) there exists a measure $\hat{\mu}$ on $M(X,f)$ satisfying $\hat{\mu}\left(M^e(X,f)\right)=1$ and
		$$
		\int \psi(x) d \mu(x)=\int_{M^e(X,f)}\left(\int_X \psi(x) d \tau(x)\right) d \hat{\mu}(\tau) \text { for every } \psi \in C(X, \mathbb{R}).
		$$
		By \eqref{eq lemma2.6}, we have
		\[\sup _{\tau \in M^e(X,f)} \inf_{\xi \succ \mathcal{U}} h_\tau(f, \xi)\leq \sup _{\tau \in M^e(X,f)}\underline{h}_{\tau}(f,Leb(\mathcal{U}),\delta')\leq h_{top}(f,Leb(\mathcal{U}))<\infty.\]
		Thus, we can pick $\nu_i \in P_i \cap M^e(X,f)$ such that
		\begin{equation*}
			\inf _{\xi \succ \mathcal{U}} h_{\nu_i}(f, \xi) \geq\inf _{\xi \succ \mathcal{U}} h_\tau(f, \xi)-\delta\mbox{ for $\hat{\mu}$-almost every $\tau \in P_i \cap M^e(X,f)$}.
		\end{equation*}
		\begin{comment}
			noticing \cite[Section 5]{HMRY} or \cite[Theorem 2]{S})
			\begin{equation*}
				\sup _{\tau \in M^e(X,f)} \inf_{\xi \succ \mathcal{U}} h_\tau(f, \xi)=\sup _{\tau \in M(X,f)} \inf_{\xi \succ \mathcal{U}} h_\tau(f, \xi)=h_{\operatorname{top}}(f,\mathcal{U}),
			\end{equation*}and by \cite[Theorem 4&5]{S}
		\end{comment}
		Let us consider $\lambda_i=\hat{\mu}\left(P_i\right)$ and define $\nu=\sum_{i=1}^j\lambda_i\nu_i$. We can check (1) and (3) in the statement of Lemma \ref{lem5} directly from the construction. For (2), by \cite[Proposition 5]{HMRY}, one has
		\[\inf_{\xi \succ \mathcal{U}}h_{\mu}(f,\xi)=\int_{M^e(X,f)}\inf_{\xi \succ \mathcal{U}}h_{\tau}(f,\xi)d\hat{\mu}(\tau).\]
		Thus, by the choice of the measure $\nu_i$, one has
		\[\inf_{\xi \succ \mathcal{U}}h_{\mu}(f,\xi)=\sum_{i=1}^j\int_{P_i\cap M^e(X,f)}\inf_{\xi \succ \mathcal{U}}h_{\tau}(f,\xi)d\hat{\mu}(\tau)\le \sum_{i=1}^j\lambda_i\inf _{\xi\succ \mathcal{U}} h_{\nu_i}(f, \xi)+\delta,\]
		which finishes the proof of Lemma \ref{lem5}.
	\end{proof}

	\section*{Acknowledgement}
	The authors would like to thanks   Professor Dou Dou, Professor Paulo Varandas and the referee for kindly suggestions.
	C. Liu was partially supported by NNSF of China (12090012, 12031019,  12090010). X. Liu was supported by NNSF of China (12090012, 12090010) and the China Postdoctoral Science Foundation (2022M723056).

	\bibliographystyle{acm}

\begin{thebibliography}{10}
	
	\bibitem{Ac}
	{\sc Acevedo, J.~M.}
	\newblock Genericity of continuous maps with positive metric mean dimension.
	\newblock {\em Results Math. 77}, 1 (2022), Paper No. 2, 30.
	
	\bibitem{LF}
	{\sc Backes, L., and Rodrigues, F.~B.}
	\newblock A variational principle for the metric mean dimension of level sets.
	\newblock {\em arXiv preprint arXiv:2207.03238\/} (2022).
	
	\bibitem{BobokTroubetzkoy2020}
	{\sc Bobok, J., and Troubetzkoy, S.}
	\newblock Typical properties of interval maps preserving the {L}ebesgue
	measure.
	\newblock {\em Nonlinearity 33}, 12 (2020), 6461--6479.
	
	\bibitem{Bowen1973Topentropy}
	{\sc Bowen, R.}
	\newblock Topological entropy for noncompact sets.
	\newblock {\em Trans. Amer. Math. Soc. 184\/} (1973), 125--136.
	
	\bibitem{MR1041229}
	{\sc Dateyama, M.}
	\newblock The almost weak specification property for ergodic group
	automorphisms of abelian groups.
	\newblock {\em J. Math. Soc. Japan 42}, 2 (1990), 341--351.
	
	\bibitem{DGS}
	{\sc Denker, M., Grillenberger, C., and Sigmund, K.}
	\newblock {\em Ergodic theory on compact spaces}.
	\newblock Lecture Notes in Mathematics, Vol. 527. Springer-Verlag, Berlin-New
	York, 1976.
	
	\bibitem{Tian2018shadowing}
	{\sc Dong, Y., Oprocha, P., and Tian, X.}
	\newblock On the irregular points for systems with the shadowing property.
	\newblock {\em Ergodic Theory Dynam. Systems 38}, 6 (2018), 2108--2131.
	
	\bibitem{Shulin2005}
	{\sc Ercai, C., K\"{u}pper, T., and Lin, S.}
	\newblock Topological entropy for divergence points.
	\newblock {\em Ergodic Theory Dynam. Systems 25}, 4 (2005), 1173--1208.
	
	\bibitem{FengHuang2012}
	{\sc Feng, D.-J., and Huang, W.}
	\newblock Variational principles for topological entropies of subsets.
	\newblock {\em J. Funct. Anal. 263}, 8 (2012), 2228--2254.
	
	\bibitem{Gromov1999}
	{\sc Gromov, M.}
	\newblock Topological invariants of dynamical systems and spaces of holomorphic
	maps. {I}.
	\newblock {\em Math. Phys. Anal. Geom. 2}, 4 (1999), 323--415.
	
	\bibitem{Sun2017}
	{\sc Guan, L., Sun, P., and Wu, W.}
	\newblock Measures of intermediate entropies and homogeneous dynamics.
	\newblock {\em Nonlinearity 30}, 9 (2017), 3349--3361.
	
	\bibitem{GS}
	{\sc Gutman, Y., and \'{S}piewak, A.}
	\newblock Around the variational principle for metric mean dimension.
	\newblock {\em Studia Math. 261}, 3 (2021), 345--360.
	
	\bibitem{HMRY}
	{\sc Huang, W., Maass, A., Romagnoli, P.~P., and Ye, X.}
	\newblock Entropy pairs and a local {A}bramov formula for a measure theoretical
	entropy of open covers.
	\newblock {\em Ergodic Theory Dynam. Systems 24}, 4 (2004), 1127--1153.
	
	\bibitem{K}
	{\sc Katok, A.}
	\newblock Lyapunov exponents, entropy and periodic orbits for diffeomorphisms.
	\newblock {\em Inst. Hautes \'{E}tudes Sci. Publ. Math.}, 51 (1980), 137--173.
	
	\bibitem{MR3546668}
	{\sc Kwietniak, D., L~\c{a}cka, M., and Oprocha, P.}
	\newblock A panorama of specification-like properties and their consequences.
	\newblock In {\em Dynamics and numbers}, vol.~669 of {\em Contemp. Math.} Amer.
	Math. Soc., Providence, RI, 2016, pp.~155--186.
	
	\bibitem{LindenstraussTsukamoto2018}
	{\sc Lindenstrauss, E., and Tsukamoto, M.}
	\newblock From rate distortion theory to metric mean dimension: variational
	principle.
	\newblock {\em IEEE Trans. Inform. Theory 64}, 5 (2018), 3590--3609.
	
	\bibitem{LindenstraussTsukamoto2019}
	{\sc Lindenstrauss, E., and Tsukamoto, M.}
	\newblock Double variational principle for mean dimension.
	\newblock {\em Geom. Funct. Anal. 29}, 4 (2019), 1048--1109.
	
	\bibitem{LindenstraussWeiss2000}
	{\sc Lindenstrauss, E., and Weiss, B.}
	\newblock Mean topological dimension.
	\newblock {\em Israel J. Math. 115\/} (2000), 1--24.
	
	\bibitem{Marcus1980}
	{\sc Marcus, B.}
	\newblock A note on periodic points for ergodic toral automorphisms.
	\newblock {\em Monatsh. Math. 89}, 2 (1980), 121--129.
	
	\bibitem{Pesin}
	{\sc Pesin, Y.~B.}
	\newblock {\em Dimension theory in dynamical systems}.
	\newblock Chicago Lectures in Mathematics. University of Chicago Press,
	Chicago, IL, 1997.
	\newblock Contemporary views and applications.
	
	\bibitem{PesinPitskel1984toppressure}
	{\sc Pesin, Y.~B., and Pitskel, B.~S.}
	\newblock Topological pressure and the variational principle for noncompact
	sets.
	\newblock {\em Funktsional. Anal. i Prilozhen. 18}, 4 (1984), 50--63, 96.
	
	\bibitem{PfisterSullivan2005}
	{\sc Pfister, C.-E., and Sullivan, W.~G.}
	\newblock Large deviations estimates for dynamical systems without the
	specification property. {A}pplications to the {$\beta$}-shifts.
	\newblock {\em Nonlinearity 18}, 1 (2005), 237--261.
	
	\bibitem{PfisterSullivan2007galmost}
	{\sc Pfister, C.-E., and Sullivan, W.~G.}
	\newblock On the topological entropy of saturated sets.
	\newblock {\em Ergodic Theory Dynam. Systems 27}, 3 (2007), 929--956.
	
	\bibitem{AnthonyTerry2016}
	{\sc Quas, A., and Soo, T.}
	\newblock Ergodic universality of some topological dynamical systems.
	\newblock {\em Trans. Amer. Math. Soc. 368}, 6 (2016), 4137--4170.
	
	\bibitem{Shapira2007}
	{\sc Shapira, U.}
	\newblock Measure theoretical entropy of covers.
	\newblock {\em Israel J. Math. 158\/} (2007), 225--247.
	
	\bibitem{S}
	{\sc Shi, R.}
	\newblock On variational principles for metric mean dimension.
	\newblock {\em IEEE Trans. Inform. Theory 68}, 7 (2022), 4282--4288.
	
	\bibitem{sun2019ergodic}
	{\sc Sun, P.}
	\newblock Ergodic measures of intermediate entropies for dynamical systems with
	approximate product property.
	\newblock {\em arXiv preprint arXiv:1906.09862\/} (2019).
	
	\bibitem{SunPeng}
	{\sc Sun, P.}
	\newblock Unique ergodicity for zero-entropy dynamical systems with the
	approximate product property.
	\newblock {\em Acta Math. Sin. (Engl. Ser.) 37}, 2 (2021), 362--376.
	
	\bibitem{TV}
	{\sc Takens, F., and Verbitskiy, E.}
	\newblock On the variational principle for the topological entropy of certain
	non-compact sets.
	\newblock {\em Ergodic Theory Dynam. Systems 23}, 1 (2003), 317--348.
	
	\bibitem{Thompson2010irregularspecification}
	{\sc Thompson, D.}
	\newblock The irregular set for maps with the specification property has full
	topological pressure.
	\newblock {\em Dyn. Syst. 25}, 1 (2010), 25--51.
	
	\bibitem{Thompson2012almostspecification}
	{\sc Thompson, D.~J.}
	\newblock Irregular sets, the {$\beta$}-transformation and the almost
	specification property.
	\newblock {\em Trans. Amer. Math. Soc. 364}, 10 (2012), 5395--5414.
	
	\bibitem{TianVarandas2017}
	{\sc Tian, X., and Varandas, P.}
	\newblock Topological entropy of level sets of empirical measures for
	non-uniformly expanding maps.
	\newblock {\em Discrete Contin. Dyn. Syst. 37}, 10 (2017), 5407--5431.
	
	\bibitem{Peter}
	{\sc Walters, P.}
	\newblock {\em An introduction to ergodic theory}, vol.~79 of {\em Graduate
		Texts in Mathematics}.
	\newblock Springer-Verlag, New York-Berlin, 1982.
	
	\bibitem{Wang2021}
	{\sc Wang, T.}
	\newblock Variational relations for metric mean dimension and rate distortion
	dimension.
	\newblock {\em Discrete Contin. Dyn. Syst. 41}, 10 (2021), 4593--4608.
	
	\bibitem{WWW}
	{\sc Wei, C., Wen, S., and Wen, Z.}
	\newblock Remarks on dimensions of {C}artesian product sets.
	\newblock {\em Fractals 24}, 3 (2016), 1650031, 8.
	
	\bibitem{Koichi1980}
	{\sc Yano, K.}
	\newblock A remark on the topological entropy of homeomorphisms.
	\newblock {\em Invent. Math. 59}, 3 (1980), 215--220.
	
\end{thebibliography}

\end{document}